%% file: main.tex
\documentclass[letterpaper,11pt,reqno]{amsart}
\usepackage[portrait,margin=1in]{geometry}
\usepackage[T1]{fontenc}
\usepackage[utf8]{inputenc}
\usepackage{microtype}
\input{style}

\begin{document}
\title{Limit Theorems for the Complete Feedback Card Guessing Game}
\author[Dey]{Partha S.~Dey$^\star$}
\author[Manthripragada]{Jaya M. Manthripragada$^\dag$}
\date{\today}
\address{University of Illinois Urbana--Champaign, 1409 W.~Green Street, Urbana, Illinois 61801}
\email{\{$^\star$psdey,$^\dag$jm66\}@illinois.edu}

\begin{abstract}
	We study the optimal score $T_{n,m}$ in a complete feedback card guessing game.
	The deck has $n$ types and $m$ cards of each type.
	Its predictable compensator $S_{n,m}$ is the sum of the conditional success probabilities.
	We prove that $\var(S_{n,m})\leq C((\log\log n)^2+m/\log n)$, uniformly in $m$ for large $n$.
	This bound yields a central limit theorem for every sequence $m=m_n\geq1$ as $n\to\infty$.
	The score variance is asymptotic to its mean.
	We also give uniform Kolmogorov bounds using the conditional Bernoulli moments.
	For $m,n\to\infty$, the excess mean is asymptotic to
	$(\log n)\log\min\{m,\log n\}+(\pi/\sqrt2)\sqrt{m\log n}$.
	The two contributions balance at $m\asymp(\log n)(\log\log n)^2$.
	The variance proof uses reversed occupancy hitting times.
	Discrete hypergeometric tail estimates control their fluctuations in the large-deviation, Gaussian, and saturation regimes.
	For fixed $n$, we also show that the compensator has a Brownian-bridge functional limit as $m\to\infty$.
\end{abstract}

\subjclass[2020]{Primary: 60F05; Secondary: 60C05, 60G42}
\keywords{Card guessing, complete feedback, martingale central limit theorem, negative association}

\maketitle
\setcounter{tocdepth}{1}\tableofcontents

%%%%%%%%%%%%%%%%%%%%%%%%%%%%%%%
\section{Introduction}\label{sec:intro}
%%%%%%%%%%%%%%%%%%%%%%%%%%%%%%%

Consider a uniformly shuffled deck of $N=mn$ cards.
There are $n$ types, with $m$ cards of each type.
In each round, the player guesses the top card.
The card is then revealed and removed.
This is \emph{complete feedback}.
The player therefore knows the composition of the remaining deck.
The aim is to maximize the expected number of correct guesses.

Always guessing one type gives exactly $m$ correct guesses.
The optimal strategy is greedy~\cite{DG81} and chooses a type with maximal remaining multiplicity.
We break ties uniformly, using randomness, independent of the shuffle.
Corollary~\ref{cor:ties} allows other non-anticipating tie rules.
Let $T_{n,m}$ denote the resulting score.
We are interested in the distributional behavior of $T_{n,m}$ as $n\to\infty$, with the multiplicity $m=m_n$ arbitrarily growing with $n$.
Card guessing provides a model of sequential prediction with feedback.
It also has two classical statistical applications.

\smallskip\textbf{Selection bias.}
Blackwell and Hodges~\cite{BH57} studied the two-type game in the design of clinical trials.
Suppose that $2m$ subjects receive two treatments, with $m$ subjects in each group.
An experimenter who knows previous assignments may predict the next treatment.
That knowledge can bias the selection of subjects.
The optimal excess score measures the predictability of the design.

\smallskip\textbf{Experiments with feedback.}
Diaconis and Graham~\cite{DG81} studied the game in the assessment of extrasensory-perception experiments.
Feedback changes the null distribution of the score.
Greedy play supplies an extremal benchmark.
Assessing fluctuations requires more than the expected score.

\smallskip

When $m\ge 2$ is fixed, the mean is asymptotic to $\log n\cdot\sum_{i=1}^{m}1/i$ (see~\cite{He_2023}).
When $n\ge2$ is fixed, it is $m$ plus a term of order $\sqrt m$ (see~\cite{DG81}).
Ottolini and Steinerberger~\cite{OS22} identified the correction $\pi\sqrt{m\log n/2}$ for sufficiently large multiplicities.
Theorem~\ref{thm:allmean} connects these regimes for every sequence $m,n\to\infty$.
The crossover occurs at $m\asymp(\log n)(\log\log n)^2$.
In terms of distributional convergence, Ottolini and Tripathi~\cite{OT23} proved a Gaussian CLT for the score for fixed $m$.
The two-type score limits are also known~\cite{KP23}.
We prove Gaussian score fluctuations for every multiplicity sequence when $n\to\infty$.
The variance is always asymptotic to the mean in this case.
Our decomposition separates the predictable compensator from the martingale remainder.
Their covariance need not vanish. However, the compensator has negligible variance on the score scale.

%%%%%%%%%%%%%%%%%%%%%%%%%%%%%%%
\subsection{Main results}\label{ssec:mainres}
%%%%%%%%%%%%%%%%%%%%%%%%%%%%%%%

Our first theorem gives the excess mean above the score $m$ obtained by always guessing one type.

\begin{thm}[Mean for growing multiplicities]\label{thm:allmean}
	For every sequence $n\to\infty$ and $m=m_n\to\infty$,
	\begin{align}\label{eq:allmean}
		\E T_{n,m}-m
		=(1+o(1))\left(\log n\,\log(\min\{m,\log n\})+\frac{\pi}{\sqrt2}\sqrt{m\log n}\right).
	\end{align}
	In particular, we have
	\[
		\E T_{n,m}-m\sim
		\begin{cases}
			(\log n)\,(\log m),      & m\ll (\log n)(\log\log n)^2,                        \\[1mm]
			\left(1+\pi\sqrt{c/2}\right)\log n\,\log\log n,
			                     & m\sim c\,(\log n)(\log\log n)^2,\quad c\in(0,\infty), \\[1mm]
			\pi\sqrt{m\log n/2}, & m\gg (\log n)(\log\log n)^2.
		\end{cases}
	\]
\end{thm}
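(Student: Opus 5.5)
We write $\E T_{n,m}=\E S_{n,m}$, where $S_{n,m}=\sum_{t}M_t/R_t$ is the predictable compensator. Here $R_t=N-t+1$ is the number of cards remaining before round $t$, and $M_t$ is the maximal remaining multiplicity. Since $\sum_t 1/R_t$ with one type tracked gives the baseline $m$, we can write
\[
\E T_{n,m}-m=\E\sum_{t=1}^{N}\frac{M_t-\bar M_t}{R_t},
\]
where $\bar M_t$ is the remaining count of a fixed type. By exchangeability this reduces the excess to $\sum_t \E[M_t-X_t]/R_t$, with $X_t$ hypergeometric.

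The plan is to reverse time. Let $k=N-t+1$ be the number of remaining cards; the remaining deck is a uniform random subset of size $k$. Then
\[
\E T_{n,m}-m=\sum_{k=1}^{N}\frac{\E[\max_i X_i^{(k)}]-k/n}{k},
\]
where $(X_i^{(k)})_{i\le n}$ are multivariate hypergeometric counts. Everything then reduces to sharp asymptotics for the expected maximum of $n$ negatively associated hypergeometric variables, which we approximate by i.i.d.\ binomial or Poisson variables via Poissonization (drawing each card independently with probability $p=k/N$ and conditioning on the total). We split the sum over $p$ into three ranges.

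\textbf{(i) Sparse range, $k/n\le \log n/\log\log n$ roughly.} Here the maximum is of large-deviation type, with $\max_i X_i\approx j$ where $n\binom{m}{j}p^j\asymp1$. When $m\ll\log n$, the max saturates at $m$ for most of this range, and the contribution of the levels $j=1,\dots,m$ gives the harmonic-type sum $\log n\sum_{j\le m}1/j\approx\log n\log m$, matching \cite{He_2023}. When $m\gg\log n$, the levels only run up to about $\log n$, giving $\log n\log\log n$. Concretely, we sum $\sum_k \E[\max]/k$ by writing $\E\max=\sum_j\Pr(\max\ge j)$ and computing, for each $j$, the range of $k$ where $\Pr(\max\ge j)\approx1$. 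That range has logarithmic length $\approx(\log n)/j$ in $\log k$ until saturation, which produces $\log n\log\min\{m,\log n\}$.

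\textbf{(ii) Gaussian range, $mp\gg\log n$.} Here
\[
\max_iX_i-mp\approx\sqrt{2mp(1-p)\log n}
\]
by Gaussian maxima, and the contribution is
\[
\sum_k\frac{\sqrt{2mp(1-p)\log n}}{k}\approx\sqrt{2m\log n}\int_0^1\sqrt{\frac{1-p}{p}}\,dp=\frac{\pi}{\sqrt2}\sqrt{m\log n},
\]
recovering \cite{OS22}. The lower-order corrections to the Gaussian max, of relative size $\log\log n/\log n$, and the intermediate Poisson regime contribute $o$ of the larger main term.

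\textbf{(iii) Matching.} We show that the two contributions are additive up to $1+o(1)$ by bounding the overlap window $mp\in[\log n/\log\log n,\ \log n\,\omega_n]$ with $\omega_n\to\infty$ slowly. This needs uniform Chernoff/Bennett bounds for the upper tail of the binomial, namely $\E\max\le mp+\sqrt{2mp\log n}+O(\log n/\log(\log n/mp))$, together with matching lower bounds from independence after Poissonization, using negative association for the upper bound.

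The main obstacle is this uniform control across the transition, where neither the Poisson large-deviation nor the Gaussian approximation is accurate and the errors must be shown to be $o(\log n\log\min\{m,\log n\}+\sqrt{m\log n})$ for every sequence $m_n$. A cruder difficulty is the de-Poissonization near $p\to1$, where $1-p$ is small and the hypergeometric variance factor matters; there we would use the exact finite-population variance in the Bennett bound. The displayed cases then follow by comparing $\log n\log m$ with $\sqrt{m\log n}$ at $m=c\log n(\log\log n)^2$.
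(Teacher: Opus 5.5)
Your outline is workable, and its skeleton matches the paper's. Both use the reversed-time identity $\E T_{n,m}-m=\sum_k(\E W_k-k/n)/k$ and a level-by-level large-deviation count below $mp\approx\log n$, giving $\log n\sum_{j\le\min\{m,\log n\}}1/j$. Above that point, both use Gaussian maxima integrated against $\int_0^1\sqrt{(1-p)/p}\,dp=\pi/2$, with a Bernstein-type bound across the transition.

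The genuine difference is how the fixed-total (hypergeometric) model is linked to independent binomials. You approximate at each $k$ by Poissonization at $p=k/N$ and then plan to de-Poissonize, using negative association and special care near $p\to1$. You list this as a main difficulty. The paper shows the Poissonization is \emph{exact} after integration (Lemma~\ref{lem:meanintegral}). Since $\E Z(p)=\sum_t\binom Nt p^t(1-p)^{N-t}\E W_t$ and $\binom Nt\int_0^1p^{t-1}(1-p)^{N-t}\,dp=1/t$, one gets $\sum_t\E W_t/t=\int_0^1\E Z(p)\,dp/p$, where $Z(p)$ is the maximum of $n$ i.i.d.\ $\Bin(m,p)$ variables. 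After this, no de-Poissonization, negative association or hypergeometric estimate is needed:
\begin{itemize}
\item The early integral over $mp\le\min\{m,\log n\}$ uses only $\binom mj(\lambda/m)^j(1-\lambda/m)^{m-j}\le f_j(\lambda)\le(e\lambda/j)^j$.
\item The Gaussian integral uses the single uniform bound $0\le\E Z(p)-mp\le\sqrt{2mp(1-p)\log n}+\log n/3$. Its $(1-p)$ factor removes your $p\to1$ concern.
\item The matching lower bound uses Stirling only on $[\delta,1-\delta]$, which suffices because $\E Z(p)-mp\ge0$.
\end{itemize}
Your per-$k$ route would give more, namely pointwise information on $\E W_k$. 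The cost is uniform-in-$m$ comparisons at every $k$, which the identity makes unnecessary.

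If you keep your route, several points need fixing.
\begin{itemize}
\item Negative association works in the opposite direction from what you state. It gives $\pr(W_t<j)\le(1-p_j(t))^n$, so it transfers \emph{lower} bounds on the maximum. Upper bounds come from the union bound, or from Jensen with marginal MGFs, and need no dependence structure. For the MGF, note that $\hypgeom(N,m,k)$ and $\hypgeom(N,k,m)$ coincide, and apply Hoeffding's comparison with $\Bin(m,k/N)$.
\item Your transitional bound $O(\log n/\log(\log n/mp))$ blows up as $mp\uparrow\log n$, which is inside your overlap window. The uniform remainder $\log n/3$ is what works, and it integrates to $\tfrac13\log n\log(m/\log n)$ over $mp\ge\log n$. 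This term comes from binomial skewness and is the true size of the non-Gaussian correction just above $mp\approx\log n$; it is not a relative $\log\log n/\log n$ error. It is negligible only because it is $O(\log n)$ when $m/\log n$ is bounded and $o(\sqrt{m\log n})$ when $m/\log n\to\infty$. The paper passes to subsequences to cover arbitrary $m_n$.
\item In the sparse range, the per-level error hidden in your ``logarithmic length $\approx(\log n)/j$'' must sum to $O(\min\{m,\log n\})$. The paper achieves this with $\sum_j(\log(ea/j))_+=O(a)$. A crude per-level error of order $\log\log n$, summed over $\log n$ levels, would be as large as the main term.
\end{itemize}
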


Let $\pi_{k-1}^*$ be the conditional success probability in round $k$.
Define the compensator $S_{n,m}$ of the score $T_{n,m}$ as
\[
	S_{n,m}:=\sum_{k=1}^{nm}\pi_{k-1}^*,\qquad \mu_{n,m}:=\E T_{n,m}=\E S_{n,m}.
\]
The compensator is a function of the deck and has mean $\mu_{n,m}$.
The difference $M_N=T_{n,m}-S_{n,m}$ is a terminal martingale value, where $N:=nm$. Section~\ref{ssec:martclt} gives the precise filtration.
Throughout,  we use
\[
	\cH_n=\sum_{j=1}^n\frac1j,\text{ and } \mu_{n,m}\geq m-1+\cH_n
\]
by Lemma~\ref{lem:easylb}.
The Gaussian limit for the score holds whenever $n\to\infty$, including every joint limit $m,n\to\infty$.

\begin{thm}[Central limit theorem]\label{thm:clt}
	Let $m=m_n\geq 1$ be arbitrary and write $\mu_{n,m}:=\E T_{n,m}=\E S_{n,m}$.
	As $n\to\infty$,
	\begin{enumeratea}
		\item \label{clt:a}$(S_{n,m}-\mu_{n,m})/\sqrt{\mu_{n,m}}\to 0$ in $L^2$.
		In particular, $S_{n,m}/\mu_{n,m}\to 1$ in $L^2$.
		\item \label{clt:b}$(T_{n,m}-S_{n,m})/\sqrt{\mu_{n,m}} \implies \N (0,1)$.
		\item \label{clt:c} $\var(T_{n,m}) \sim \mu_{n,m}$ and
		\[
			\frac{T_{n,m}-\mu_{n,m}}{\sqrt{\mu_{n,m}}}\implies \N(0,1).
		\]
        \item\label{clt:d} Let $d_K$ denote the Kolmogorov--Smirnov distance.
	There exist absolute constants $C,C',n_0$ such that, for $n\geq n_0$, we have
	\begin{align}\label{eq:uniformrate}
    \begin{split}    
		\sup_{m\geq1}d_K &\left(\frac{T_{n,m}-\E T_{n,m}}{\sqrt{\mu_{n,m}}},\N(0,1)\right)\\ 
        &\leq C\left(\frac{\log(2+\mu_{n,m})}{\sqrt{\mu_{n,m}}}
		+\left(\frac{1+\var(S_{n,m})}{\mu_{n,m}^2}+\frac1{n^2}\right)^{1/5} +\left( \frac{\var(S_{n,m})}{\mu_{n,m}}\right)^{1/3}\right)\\
        &\leq C' \cdot \frac{(1+\log\log n)^{2/3}}{(\log n)^{1/3}}.
            \end{split}
	\end{align}
	\end{enumeratea}
\end{thm}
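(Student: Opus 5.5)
The plan is to take the variance bound from the abstract, $\var(S_{n,m})\le C((\log\log n)^2+m/\log n)$ for large $n$ uniformly in $m$, as the main input. I also use $\mu_{n,m}\ge m-1+\cH_n\ge \max\{m-1,\log n\}$ (Lemma~\ref{lem:easylb}).

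\textbf{Part (a).} Dividing the variance bound by $\mu_{n,m}$ gives
\[
	\frac{\var(S_{n,m})}{\mu_{n,m}}\le C\Bigl(\frac{(\log\log n)^2}{\log n}+\frac{m}{(m-1+\log n)\log n}\Bigr)\to0,
\]
uniformly in $m$. This is the $L^2$ statement. Since $\mu_{n,m}\to\infty$, it also gives $S_{n,m}/\mu_{n,m}\to1$ in $L^2$.

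\textbf{Part (b).} I would apply the martingale CLT to $M_k=\sum_{j\le k}(\xi_j-\pi^*_{j-1})$, where $\xi_j$ is the success indicator in round $j$. The filtration is generated by the revealed cards together with the tie-breaking randomness, so that $\pi^*_{j-1}$ is predictable. Since $|\xi_j-\pi^*_{j-1}|\le1$ and $\mu_{n,m}\to\infty$, the increments of $M_k/\sqrt{\mu_{n,m}}$ are bounded by $\mu_{n,m}^{-1/2}\to0$, so the conditional Lindeberg condition is automatic. The conditional variance is
\[
	V_N=\sum_j\pi^*_{j-1}(1-\pi^*_{j-1})=S_{n,m}-\sum_j(\pi^*_{j-1})^2 .
\]
By (a) it suffices to show $\E\sum_j(\pi^*_{j-1})^2=o(\mu_{n,m})$. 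Write $\pi^*_{j-1}=\max\text{count}/(N-j+1)$. In all but the last $o(\mu)$ rounds the maximum count is a small fraction of the remaining deck, so $\sum_j(\pi^*)^2\le \sum_j\pi^*\cdot\pi^*$ with $\pi^*$ small. Concretely, I would split at the time $k_0$ when $N-k_0\approx K$ cards remain, for a $K$ chosen slowly growing relative to $\mu$. Before $k_0$, the maximal remaining multiplicity is at most about $m$ plus lower order among at least $K$ cards. Using $\pi^*\le \max\{m,\text{fluct}\}/(N-j)$, the early sum is at most $\max_j\pi^*\cdot S$. The late rounds contribute at most $K$. The main work is showing $\max_{j\le k_0}\pi^*_{j}\to0$ in probability when $m$ is small relative to $n$, via occupancy estimates. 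When $n$ is large, $\pi^*$ remains $O(m/(\text{remaining}))$ until few types remain; the last $\sim\log n$ rounds, where types are singletons, contribute $\sum (1/r)^2=O(1)$. With this, $V_N/\mu\to1$ in probability and (b) follows.

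\textbf{Part (c).} Write $T-\mu=M_N+(S-\mu)$. By (a) the second term is $o(\sqrt\mu)$ in $L^2$. Slutsky then gives the CLT. For the variance, $\var T=\E M_N^2+2\operatorname{Cov}(M_N,S)+\var S$ with $\E M_N^2=\E V_N\sim\mu$. Cauchy--Schwarz bounds the cross term by $\sqrt{\mu\,\var S}=o(\mu)$.

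\textbf{Part (d).} I would use a quantitative martingale CLT, of Haeusler/Bolthausen type for bounded increments. Its rate is $\log(\mu)/\sqrt\mu$ plus $\bigl(\E|V_N/\mu-1|^p\bigr)^{1/(2p+1)}$; taking $p=2$ gives the $1/5$ power. Bound $\E(V_N-\mu)^2\lesssim\var S+(\E\sum(\pi^*)^2)^2$ plus a small term, with $1/n^2$ absorbing rare events. The perturbation $(S-\mu)/\sqrt\mu$ costs, by the Kolmogorov smoothing lemma with Chebyshev, $\epsilon+\var S/(\mu\epsilon^2)$, optimized at $\epsilon=(\var S/\mu)^{1/3}$. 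Plugging in the variance bound and $\mu\ge\log n$ gives the worst term $((\log\log n)^2/\log n)^{1/3}$. The main obstacle is the uniform-in-$m$ control of $\sum(\pi^*)^2$, since (a) itself is inherited from the variance theorem.
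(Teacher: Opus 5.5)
Your architecture matches the paper's. Part (a) comes from the variance theorem and $\mu_{n,m}\ge m-1+\cH_n$. Part (b) is the martingale CLT with $V_N=S_{n,m}-B_N$, where $B_N:=\sum_k(\pi^*_{k-1})^2$. Part (c) is Slutsky plus Cauchy--Schwarz on the covariance. Part (d) is a Fan-type bound with $p=2$, then Chebyshev smoothing at $\delta=(v/\mu)^{1/3}$.

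The genuine gap is the step you flag yourself, the control of $B_N$, and your sketch rests on the wrong smallness. The bound $\pi^*\le\max\{m,\text{fluct}\}/(N-j)$ is useless once $m\gtrsim K$. What is small is $W_t/t$ in reversed time, i.e.\ the largest type count among the last $t$ cards divided by $t$. It is small because $n\to\infty$, for every $m$, not because $m$ is small relative to $n$. The missing input is a uniform-in-$m$ tail bound, $\pr(W_t\ge k)\le n\,(et/(nk))^k$. It follows from a union bound and $\E\binom{\widetilde X_1(t)}{k}=\binom tk (m)_k/(N)_k\le\binom tk n^{-k}$; the paper gets it from the capacity coupling to the multinomial. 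With it, $\max_{t\ge K}W_t/t\to0$ in probability uniformly in $m$ for any $K\to\infty$. Since $\|S_{n,m}\|_2\le 2\mu$ by (a), your splitting then gives $\E B_N=o(\mu)$, hence (b) and (c).

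For (d) this is not enough. Fan's bound needs $\E|Q-1|^2\le 2(v+\E B_N^2)/\mu^2$, so you need the second moment $\E B_N^2$ with an explicit rate, not $(\E B_N)^2$ plus an unspecified small term. The paper proves $\|B_N\|_2\le C(1+m/n)$. The tail bound gives $\|W_t\|_4\le C$ for $t\le\sqrt n$ and $\|W_t\|_4\le C(t/n+\log n)$ for all $t$. Minkowski then gives $\|B_N\|_2\le\sum_t\|W_t\|_4^2/t^2$. Hence $\E B_N^2/\mu^2\le C(\mu^{-2}+n^{-2})$, which is exactly the term $((1+v)/\mu^2+n^{-2})^{1/5}$.

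Your splitting cannot produce this. Since $W_K\ge 1$, the early bound $\max_{t\ge K}(W_t/t)\,S_{n,m}$ is at least $S_{n,m}/K$. Together with the late bound $K$, your estimate of $\|B_N\|_2$ is never better than $K+\mu/K\ge 2\sqrt\mu$. At best this gives $\E|Q-1|^2\lesssim v/\mu^2+1/\mu$, and so a term of order $\mu^{-1/5}$. For bounded $m$ one has $\mu\asymp\log n$ and $v=O(1)$. Then $(\log n)^{-1/5}$ exceeds both the first bound in (d), which is $O((\log n)^{-1/3})$, and the final rate $(1+\log\log n)^{2/3}(\log n)^{-1/3}$.

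Likewise, the rate form you quote is Fan's refinement, not Haeusler's theorem. Haeusler's bound by itself carries an extra Lindeberg term $\bigl(\mu^{-2}\sum_k\E D_k^4\bigr)^{1/5}\asymp\mu^{-1/5}$, with $D_k=M_k-M_{k-1}$, and would fail for the same reason.
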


\begin{rem}\label{rem:rates}
	
	The proof of the rate of convergence uses exact conditional Bernoulli moments and Fan's inequality~\cite{Fan19}, and is given in Section~\ref{ssec:rates}. We expect an error bound of order $(\log n)^{-1/2+o(1)}$, but do not pursue that direction in this article.
\end{rem}

The main estimate underlying the CLT is concentration of the compensator on a scale smaller than the score's standard deviation.

\begin{thm}[Variance of the compensator]\label{thm:var}
	There exist absolute constants $C<\infty$ and $n_0$ such that for all $n\geq n_0$ and all $m\geq1$,
	\[
		\var(S_{n,m})\leq C\bigl(\log^2\log n+{m}/{\log n}\bigr).
	\]
	In particular, $\var(S_{n,m})/\E S_{n,m}\to0$ as $n\to\infty$, uniformly in $m\geq 1$.
\end{thm}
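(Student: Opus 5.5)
The plan is to write the compensator as an explicit functional of the reversed occupancy process and then control its fluctuations.

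\textbf{Representation.} With complete feedback and greedy play, $\pi_{k-1}^*=M_{k-1}/(N-k+1)$, where $M_{k-1}$ is the maximal remaining multiplicity after $k-1$ cards have been revealed. The same holds for any tie rule. Reverse time: the last $j$ cards of the deck form a uniform sample without replacement of size $j$. Let $R_j$ be the maximal type-count in this sample. Then
\[
S_{n,m}=\sum_{j=1}^{N}\frac{R_j}{j}.
\]
Writing $R_j=\sum_{\ell\ge1}\mathbf 1\{\tau_\ell\le j\}$, with $\tau_\ell$ the first $j$ at which some type reaches count $\ell$ (the reversed occupancy hitting times), we get
\[
S_{n,m}=\sum_{\ell=1}^{m}\bigl(\cH_N-\cH_{\tau_\ell-1}\bigr).
\]
Since $\tau_\ell\le n(\ell-1)+1$ by pigeonhole, $\var(S_{n,m})$ reduces to controlling $\log\tau_\ell$, namely $\var\bigl(\sum_\ell \log\tau_\ell\bigr)$ up to $O(1/\tau_\ell)$ errors. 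Here $\tau_1=1$, and the first few levels contribute the $\log\log n$ terms.

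\textbf{Fluctuation bounds per level, by regime.}
\begin{enumerate}
\item \emph{Large-deviation regime}, $\ell\ll\log n$. Here $\tau_\ell$ is the time for the maximum of $n$ roughly Poisson/hypergeometric counts to reach $\ell$. Using $\{\tau_\ell>j\}=\{R_j<\ell\}$ and negative association of the type counts, compare with independent hypergeometric/binomial counts. This shows $\log\tau_\ell$ has $O(1/\ell)$-scale spread, so its standard deviation is $O(1/\ell)$. Summing standard deviations over $\ell\le\log n$ (via Minkowski) gives $O(\log\log n)$, hence variance $O((\log\log n)^2)$.
\item \emph{Gaussian regime}, $j/n\gg\log n$. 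The maximum equals $j/n+\sqrt{2(j/n)\log n}\,(1+o(1))$, with fluctuations of order $\sqrt{(j/n)/\log n}$ (Gumbel scale), using hypergeometric Gaussian tail estimates. Hence $\tau_\ell$ has relative fluctuation of order $(\ell\log n)^{-1/2}$. Summing over $\ell\le m$ suggests a standard deviation $\sum_\ell (\ell\log n)^{-1/2}\sim\sqrt{m/\log n}$, which gives $m/\log n$ for the variance only if we use Minkowski. Minkowski is exactly the bound stated, so it suffices.
\item \emph{Saturation regime}, $j$ near $N$, i.e.\ fewer than about $n$ cards remaining per level. Here $R_j$ is close to $m$. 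Hypergeometric tails for the minimum remaining count show $\tau_\ell$ concentrates with $\var(\log\tau_\ell)$ summable at $O(1/\log n)$ per level. Near $j=N$, the factor $\cH_N-\cH_{\tau_\ell-1}$ is small and its variation is controlled directly.
\end{enumerate}

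\textbf{Assembly.} Combine the regimes using $\var(\sum X_\ell)\le(\sum\sqrt{\var X_\ell})^2$ inside each regime, and $\var(A+B+C)\le3(\var A+\var B+\var C)$ across regimes. This gives $C(\log^2\log n+m/\log n)$. The corollary follows from Lemma~\ref{lem:easylb} and Theorem~\ref{thm:allmean}: $\mu_{n,m}\gtrsim m+\log n$, while $\log^2\log n+m/\log n=o(m+\log n)$ uniformly in $m$.

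\textbf{Main obstacle.} The hard step is the uniform quantile-level estimate for $\tau_\ell$ across the transition between regimes, roughly $\ell\asymp\log n$. There, neither the Poisson large-deviation estimate nor the Gaussian one is sharp, and we need two-sided tail bounds for the maximum of hypergeometric counts with explicit constants. My first attempt would use the Chernoff bound with exact hypergeometric rate functions, together with negative association for the upper tail and a second-moment (Paley–Zygmund) argument for the lower tail of $R_j$. A tail inversion then gives $\E(\log\tau_\ell-\text{median})^2\lesssim 1/\ell^2+1/(\ell\log n)$.
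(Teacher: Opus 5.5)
Your outline has the same architecture as the paper's proof. You use the representation $S_{n,m}=m\cH_N-\sum_{\ell}\cH_{\tau_\ell-1}$, apply Minkowski over levels, and aim for per-level bounds $\E\log^2(\tau_\ell/t_\ell)\lesssim \ell^{-2}$ for $\ell\lesssim\log n$ and $\lesssim(\ell\log n)^{-1}$ above. There is a genuine gap: those per-level bounds carry the whole theorem, you only assert them from Poisson/Gumbel heuristics, and the route you sketch for them loses logarithmic factors.

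\emph{The Chernoff route.} A Chernoff bound with the exact rate function is only an upper bound on $p_\ell(t)=\pr(\widetilde X_1(t)\geq\ell)$. It overshoots by a polynomial prefactor, $\asymp\sqrt{\ell}$ in the large-deviation range and $\asymp\sqrt{\log n}$ in the Gaussian range. Suppose one tail of $\tau_\ell$ is controlled through this proxy and the other through a Stirling or second-moment lower bound. Then the two centers differ in $\log t$ by roughly the log of the prefactor divided by the logarithmic growth rate of the exponent: $\asymp(\log\ell)/\ell$ in the large-deviation range and $\asymp\log\log n/\sqrt{\ell\log n}$ in the Gaussian range. Summed by Minkowski, this gives only $\var(S_{n,m})\lesssim(\log\log n)^4+(\log\log n)^2m/\log n$.

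\emph{Saturation levels.} On the $\Theta(m)$ levels with $m-\ell\lesssim\ell$, the hypergeometric variance $\mu(1-\mu/m)$ degenerates, so your interior location $\mu+\sqrt{2\mu\log n}$ is not the right one there. A bound of $O(1/\log n)$ per level is also too weak under Minkowski, since it gives order $m^2/\log n$. Those levels also need $O(1/(\ell\log n))$.

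\emph{The missing idea.} Control both tails through the same exact function $g(t)=np_\ell(t)$. The union bound gives $\pr(\tau_\ell\leq t)\leq g(t)$, and negative association gives $\pr(\tau_\ell>t)\leq e^{-g(t)}$; Paley--Zygmund is not needed. No prefactor ever has to be matched. What remains is a lower bound on the discrete elasticity, $g(t)\leq g(t')\bigl((t+1)/(t'+1)\bigr)^{\beta}$. Integrating the two tails then gives $\E\log^2(\tau_\ell/t_\ell)\lesssim\beta^{-2}$.

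The paper obtains $\beta$ from two facts. The first is the exact identity $p_\ell(t+1)-p_\ell(t)=\frac{\ell}{t+1}h_{t+1}(\ell)$. The second is the hazard bound $h_t(\ell)/p_\ell(t)\geq 1-r_\ell(t)$, where $r_\ell(t)=h_t(\ell+1)/h_t(\ell)$ dominates all higher p.m.f.\ ratios.

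This in turn needs a location estimate putting $t_\ell$ strictly inside the range where $1-r_\ell$ is large:
\begin{itemize}
\item For $\ell\leq\lambda\log n$: $t_\ell\leq\rho_0 n\ell$, proved uniformly in $m$ via the capacity coupling. This gives $\beta\asymp\ell$.
\item For $\ell>2\log n$: $t_\ell\leq n(\ell-D_\ell)$, with $D_\ell$ adapted to $m-\ell$, proved via a Stirling lower bound on a block of p.m.f.\ values. This gives $\beta\gtrsim\sqrt{\ell\log n}$ up to saturation.
\end{itemize}
The universal $\sqrt{\ell}$ elasticity handles the tail beyond the window. Because the large-deviation estimate holds for $\ell\leq\lambda\log n$ with any fixed $\lambda$, it overlaps the Gaussian range. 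So the transition at $\ell\asymp\log n$ that you flag as the main obstacle never arises.
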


The behavior changes when the number of types is fixed.
The next proposition identifies the compensator limit and recovers the classical first-order mean. Finding the limit for $(T_{n,m}-m)/\sqrt{m}$ is open for fixed $n\ge 3$.

\begin{prop}[Fixed-number-of-types compensator limit]\label{prop:fixedn}
	For each fixed integer $n\geq2$, as $m\to\infty$,
	\[
		\frac{S_{n,m}-m}{\sqrt m}\implies\mathcal B_n:=
		\int_0^1\frac1u\,{\max_{1\le i\leq n}(B_i(u)-\overline B(u))}\,du,
		\quad \overline B=n^{-1}\sum_{i=1}^n B_i,
	\]
	where $B_1,B_{2},\ldots,B_n$ are independent standard Brownian bridges.
	The integral exists in $L^2$.
	The first moments also converge, with
	\[
		\frac{\E T_{n,m}-m}{\sqrt m}\longrightarrow\E\mathcal B_n=\frac\pi2 \E\max_{i\leq n}Z_i,
	\]
	where $Z_1,\ldots,Z_n$ are independent standard normal variables.
\end{prop}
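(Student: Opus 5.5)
Fix $n\ge2$ and write $N=nm$. Let $X_i(k)$ be the number of type-$i$ cards among the remaining $N-k$ cards after $k$ rounds. Greedy play gives
\[
\pi_k^*=\frac{\max_i X_i(k)}{N-k},\qquad S_{n,m}=\sum_{k=0}^{N-1}\frac{\max_i X_i(k)}{N-k}.
\]
Set $u=(N-k)/N$ for the fraction of the deck remaining, and define the centered occupancy processes
\[
Y_i^{(m)}(u)=\frac{X_i(N(1-u))-mu}{\sqrt m}.
\]
Since $\sum_i X_i(k)=N-k=nmu$, we have $\overline{Y^{(m)}}\equiv0$. The identity $\max_i X_i=mu+\sqrt m\max_i Y_i^{(m)}$ then gives
\[
\frac{S_{n,m}-m}{\sqrt m}=\sum_{k}\frac{\max_i Y^{(m)}_i(u_k)}{nmu_k}
=\int_0^1\frac1u\max_iY_i^{(m)}(u)\,du+o_P(1),
\]
where the last step is a Riemann-sum approximation with mesh $1/N$ away from $u=0$. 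Here the constant term $\sum_k m u_k/(nmu_k)=m$ cancels exactly.

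The first step is the functional limit of the occupancy process. The vector $(X_i(N(1-u)))_i$ is the type count of a uniform random subset of size $nmu$, i.e. a multivariate hypergeometric sampling-without-replacement process. By the classical Donsker theorem for finite-population sampling (or via the exchangeable-increments CLT, Billingsley Thm.~24.2), $(Y_i^{(m)})_{i\le n}\Rightarrow (W_i)_{i\le n}$ in $D[0,1]^n$. Here $W$ is a centered Gaussian process with covariance $\mathrm{Cov}(W_i(s),W_j(t))=(s\wedge t-st)(\delta_{ij}-1/n)$, and this law is exactly that of $(B_i-\overline B)_i$ for independent bridges. The continuous mapping theorem then gives $\int_\varepsilon^1 u^{-1}\max_iY_i^{(m)}\,du\Rightarrow\int_\varepsilon^1 u^{-1}\max_i(B_i-\overline B)\,du$ for each fixed $\varepsilon>0$. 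The Riemann-sum error on $[\varepsilon,1]$ is $O(m^{-1/2}\cdot n\max_i\sup|Y_i|)$ times the mesh, which is negligible.

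The main obstacle is the singularity at $u=0$, and this is handled by uniform $L^2$ control of the tail. I would bound $\E\bigl(\max_iY_i^{(m)}(u)\bigr)^2\le\sum_i\E Y_i^{(m)}(u)^2\le n\,u(1-u)$, using the hypergeometric variance $m\cdot u(1-u)\cdot\frac{nm}{nm-1}\cdot\frac{n-1}{n}/m$. Minkowski's inequality then gives
\[
\Bigl\|\sum_{u_k<\varepsilon}\frac{\max_iY_i^{(m)}(u_k)}{nmu_k}\Bigr\|_2\le\sum_{u_k<\varepsilon}\frac{\sqrt{n u_k}}{nmu_k}\lesssim \frac{1}{\sqrt n}\int_0^\varepsilon u^{-1/2}du=O(\sqrt\varepsilon),
\]
uniformly in $m$. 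The same bound applied to the limit process shows that the integral defining $\mathcal B_n$ converges in $L^2$ (the Bochner integral of $u^{-1}\max_i(B_i-\overline B)(u)$, with norm $\lesssim u^{-1/2}$ integrable). A standard approximation lemma (Billingsley Thm.~3.2) combines the convergence on $[\varepsilon,1]$ with this uniform tail bound as $\varepsilon\to0$ to yield the weak convergence.

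For the moments, the bound above also gives $\sup_m\E\bigl((S_{n,m}-m)/\sqrt m\bigr)^2<\infty$, so the sequence is uniformly integrable. Hence $\E(S_{n,m}-m)/\sqrt m\to\E\mathcal B_n$, and since $\E T_{n,m}=\E S_{n,m}$ this gives the score mean. To evaluate $\E\mathcal B_n$, I use Fubini, which is justified by the $L^1$ bound $\sqrt{n u}$. For each $u$, $(B_i(u)-\overline B(u))_i\overset{d}{=}\sqrt{u(1-u)}(Z_i-\overline Z)_i$, and $\E\max_i(Z_i-\overline Z)=\E\max_iZ_i$. Therefore
\[
\E\mathcal B_n=\E\max_iZ_i\int_0^1\sqrt{(1-u)/u}\,du=\frac\pi2\E\max_iZ_i,
\]
using $\int_0^1 u^{-1/2}(1-u)^{1/2}du=B(\tfrac12,\tfrac32)=\pi/2$.
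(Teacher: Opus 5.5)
Your proposal is correct and follows the paper's proof essentially step for step. The shared steps are: convergence of the centered reversed occupancies to $(B_i-\overline B)_i$; uniform-in-$m$ control of the singularity at $u=0$ via $\|\max_i Y_i(u)\|_2^2\le\sum_i\var(Y_i(u))$; a converging-together argument; uniform integrability from the $L^2$ bound; and Fubini with $\E\max_i(B_i(u)-\overline B(u))=\sqrt{u(1-u)}\,\E\max_iZ_i$. The only real difference is that you take the joint bridge limit directly from an exchangeable-increments (finite-population) invariance principle, whereas the paper builds it from independent uniform marks and the inverse empirical process; your version gives convergence on all of $[0,1]$ and so avoids the paper's separate truncation near $u=1$ (also, the factor $1/\sqrt n$ in your tail bound should be $\sqrt n$, which is harmless for fixed $n$).
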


\begin{rem}[Score limits with a fixed number of types]
	For $n=2$, the known score limit~\cite{KP23} is
	\[
		\frac{T_{2,m}-m}{\sqrt m}\implies R,
		\quad\pr(R\leq x)=1-e^{-x^2},\quad x\geq0.
	\]
	For fixed $n\geq3$, the score limit and its joint law with the compensator remain to be identified.
	For $n=1$, both score and compensator equal $m$ deterministically.
\end{rem}

For the reversed deck, let $W_t$ be the largest number of cards of any one type among the final $t$ cards, and define $\tau_j:=\min\{t:W_t=j\}$, $1\le j\le m$.
The birthday approximation in~\cite{He_2023} gives the following fixed-level limit.
We include a direct proof that makes uniformity in the capacity explicit.
Here $(m)_j=m(m-1)\cdots(m-j+1)$.

\begin{prop}[Fixed-level hitting times]\label{prop:exp}
	Fix an integer $j\geq 2$. For $m\geq j$, let $\kappa_{m,j}:=(m)_j/m^j\in[j!/j^j,1]$. Then, as $n\to \infty$,
	\[
		\frac{\kappa_{m,j}\,\tau_j^{\,j}}{j!\, n^{j-1}} \implies \mathrm{Exp}(1),
	\]
	uniformly in $m\geq j$.
\end{prop}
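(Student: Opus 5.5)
Since $\tau_j$ is the first time some type reaches $j$ copies among the last $t$ cards, we have $\{\tau_j>t\}=\{W_t<j\}$: every type occurs at most $j-1$ times among $t$ cards drawn without replacement from the deck. The plan is to compute $\pr(W_t<j)$ directly at $t=t_n(x):=\lfloor (j!\,n^{j-1}x/\kappa_{m,j})^{1/j}\rfloor$ and show that it tends to $e^{-x}$ uniformly in $m\ge j$, for each fixed $x>0$. Here $t\asymp n^{(j-1)/j}=o(n)$, and the constants do not depend on $m$ because $\kappa_{m,j}\in[j!/j^j,1]$.

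\textbf{Step 1 (first moment).} Let $X_t$ be the number of $j$-subsets of positions among the last $t$ cards whose cards all have the same type. By symmetry,
\[
\E X_t=\binom{t}{j}\, n\,\frac{(m)_j}{(nm)_j}.
\]
Since $(nm)_j=(nm)^j(1+O(j^2/(nm)))$ and $\binom tj=\frac{t^j}{j!}(1+O(j^2/t))$, this gives $\E X_t=\frac{\kappa_{m,j}t^j}{j!\,n^{j-1}}(1+o(1))\to x$, uniformly in $m$.

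\textbf{Step 2 (Poisson approximation).} I would show that $X_t$ is asymptotically Poisson$(x)$, again uniformly in $m$. The cleanest route is the method of factorial moments. Write $\E (X_t)_r$ as a sum over ordered $r$-tuples of distinct $j$-sets. The dominant term comes from tuples of disjoint $j$-sets with distinct types. By exchangeability, their joint probability is $\prod$ of falling factorials divided by $(nm)_{rj}$, which equals $(\E X_t)^r(1+o(1))$ uniformly in $m$, since $rj$ is fixed and $n\to\infty$.

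Two kinds of correction terms remain.
\begin{itemize}
\item Disjoint $j$-sets sharing a type. These have probability at most $(m)_{2j}/(nm)_{2j}\lesssim n^{-2j}$, and the combinatorial factor is $n$ rather than $n^2$. Their total contribution is $O(t^{2j}n^{1-2j})=O(1/n)$.
\item Overlapping $j$-sets. Such sets force one type to appear at least $j+1$ times. A union of two overlapping sets of size $j+s$ with $1\le s\le j-1$ has weight $O(t^{j+s}\,n\,n^{-(j+s)})=O(n^{1-(j+s)/j})=o(1)$.
\end{itemize}
Uniformity in $m$ holds throughout because every probability is bounded by $\le (1/n)^{\text{count}}\cdot(1+O(1/n))$ uniformly, via $(m)_a/(nm)_a\le n^{-a}(1-a/(nm))^{-a}$. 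So $\E(X_t)_r\to x^r$ for each $r$, hence $X_t\Rightarrow$ Poisson$(x)$ and $\pr(X_t=0)\to e^{-x}$. Alternatively, Bonferroni inequalities applied to $\pr(X_t=0)$ need only finitely many factorial moments, which makes the uniformity transparent.

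\textbf{Step 3 (conclusion).} We have $\{W_t<j\}=\{X_t=0\}$, so $\pr(\kappa_{m,j}\tau_j^j/(j!n^{j-1})>x)=\pr(X_{t_n(x)}=0)+o(1)$. The $o(1)$ term accounts for the integer rounding of $t$, which changes $\E X_t$ by a factor $1+O(1/t)$. This gives the limit $e^{-x}$ for every $x>0$. For uniformity in $m$, argue by contradiction: take any sequence $m_n$ and apply the above bounds, whose constants do not depend on $m_n$. Monotonicity in $x$ upgrades pointwise convergence to uniform convergence in $x$ by P\'olya's theorem.

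The main obstacle is the bookkeeping in Step 2 for overlapping configurations and the sampling-without-replacement corrections. I would handle it by bounding all non-generic configurations crudely by the number of types times $t^{\#\text{positions}}n^{-\#\text{positions}}$. Each such term is $o(1)$ because every configuration in which one type carries more than $j$ positions yields a negative power of $n$.
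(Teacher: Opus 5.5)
Your proof is correct. It uses the same method as the paper: Poisson convergence by factorial moments for a count that vanishes exactly on $\{\tau_j>t\}$, at $t\asymp n^{1-1/j}$, with uniformity in $m$ coming from $\kappa_{m,j}\in[j!/j^j,1]$ and $(m)_a/(nm)_a\le n^{-a}$. What differs is the object you count.

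The paper uses the occupancy count $Z=\sum_{i=1}^n\ind_{\{\widetilde X_i(t)\geq j\}}$. For it, $\E(Z)_r=(n)_r\,\pr(\widetilde X_1(t)\geq j,\ldots,\widetilde X_r(t)\geq j)$ exactly, so all dependence is confined to $r$ fixed urns. The work is then an exact formula for the event that each of these urns holds exactly $j$ balls, including the avoidance factor $(N-rm)_{t-rj}/(N-rj)_{t-rj}$. To this it adds a Markov bound on the excess through one factorial moment of order $j+1$.

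You use the birthday count $X_t=\sum_i\binom{\widetilde X_i(t)}{j}$ of monochromatic $j$-subsets, as in He and Ottolini. Every probability you need says that prescribed positions carry prescribed types. Each is therefore a pure ratio of falling factorials with no avoidance factor, and $\E X_t$ is explicit. You pay with the enumeration of overlapping and type-sharing configurations.

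The paper's count also gives the one-urn asymptotics of $\E Z=np_j(t)$ directly. It reuses these to show $t_j/b\to1$ and the logarithmic moment limits. That goes beyond the statement, and in your setting it follows from $0\le\E X_t-\E Z=O(n^{-1/j})$.

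One detail to tighten concerns the crude bound for a configuration whose $P$ positions fall into $q$ distinct type classes. It must carry $(n)_q\le n^{q}$ type choices, not a single factor $n$. The weight is then $O(t^Pn^{q-P})=O(n^{q-P/j})$. For every non-generic configuration, $q-P/j\le-1/j$. This holds because each type class contains at least $j$ positions, and a class containing an overlap or two disjoint sets contains at least $j+1$. Finally, $\pr(\kappa_{m,j}\tau_j^j/(j!n^{j-1})>x)=\pr(X_{t_n(x)}=0)$ holds exactly because $\tau_j$ is integer-valued, so no rounding error term is needed there.
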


An urn capacity is the number of available cards of its type.
The next coupling compares decks with different common multiplicities at the same reversed time.

\begin{prop}[Capacity coupling]\label{prop:capacitycoupling}
	Fix $n\geq1$. The reversed urn processes for all integer capacities $c\geq1$ can be coupled so that, almost surely, for every $c\leq d$ and every $0\leq t\leq nc$,
	\[
		\sum_{i=1}^k H_i^{(c)}(t)\leq\sum_{i=1}^k H_i^{(d)}(t),\quad1\leq k\leq n,
	\]
	where $H^{(c)}(t)$ is the height vector arranged in decreasing order.
	The total sums are both $t$. Thus $H^{(c)}(t)$ is majorized by $H^{(d)}(t)$. Under this coupling,
	\[
		W_t^{(c)}\leq W_t^{(d)}\text{ for }0\leq t\leq nc,\quad
		\tau_j^{(d)}\leq\tau_j^{(c)}\text{ for }1\leq j\leq c,\quad
		S_{n,c}\leq S_{n,d}.
	\]
\end{prop}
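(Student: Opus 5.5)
The plan is to realize all reversed urn processes as Markov chains on sorted height vectors, driven by one common sequence of i.i.d.\ uniforms $U_1,U_2,\dots$, and to show that a single step preserves the partial-sum order. For capacity $c$ and reversed time $t<nc$, the law of the next revealed card (read from the bottom of the deck) is as follows: given the sorted heights $h=H^{(c)}(t)$, the card has the type of coordinate $i$ with probability $(c-h_i)/(nc-t)$. Write $s_k(h)=\sum_{i\le k}h_i$ and set
\[
P_k^{(c)}(h)=\frac{kc-s_k(h)}{nc-t},
\]
which is the probability that the card lands in one of the top $k$ coordinates. Use the quantile rule: the new card goes to coordinate $I=\min\{i:U_{t+1}\le P_i^{(c)}(h)\}$. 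If $I$ lies inside a block of tied coordinates, increment the leftmost member of that block instead. This keeps the vector sorted, relabels only tied types, and so does not change the law of the sorted process.

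\textbf{Partial-sum update and one-step order.} After the step, $s_k$ increases by one exactly when $I\le b(k)$, where $b(k)\ge k$ is the last index of the tie block of $h$ containing $k$. The events $\{U\le P_{b}\}$ are nested in $b$. Now suppose $s_k(h)\le s_k(g)$ for all $k$, with $h=H^{(c)}(t)$ and $g=H^{(d)}(t)$, $c\le d$. We must show $s_k(h)+\mathbf 1\{I_h\le b_h(k)\}\le s_k(g)+\mathbf 1\{I_g\le b_g(k)\}$. This is only at risk when $s_k(h)=s_k(g)$. The key inequality is then, with $s:=s_k(h)=s_k(g)$,
\[
\frac{kc-s}{nc-t}\le\frac{kd-s}{nd-t}\iff k t(d-c)\le s n(d-c),
\]
and this holds because the top $k$ heights sum to at least the average share, i.e.\ $s\ge kt/n$. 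This gives $P_k^{(c)}(h)\le P_k^{(d)}(g)$.

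\textbf{Ties (main obstacle).} The ties are where I expect the real work. I would first show that if $s_k(h)=s_k(g)$ and $k$ lies strictly inside a tie block of $h$, then the majorization inequalities at $k-1$ and $k+1$ force $h_k\ge g_k\ge g_{k+1}\ge h_{k+1}=h_k$. Hence $g$ has the same tie and equal partial sums throughout the block, so $b_g(k)\ge b_h(k)$ and $s_{b_h(k)}(h)=s_{b_h(k)}(g)$. The displayed inequality applied at $b_h(k)$ then gives $P^{(c)}_{b_h(k)}(h)\le P^{(d)}_{b_h(k)}(g)\le P^{(d)}_{b_g(k)}(g)$, which is exactly what is needed. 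For the boundary case: if the block of $h$ ends at $k$ but the block of $g$ extends beyond it, then $b_g(k)\ge k=b_h(k)$ and the argument is the same. Induction on $t$ from $H(0)=0$ then gives the majorization for all $t\le nc$. The total sums equal $t$ automatically. By Kolmogorov extension (or a direct product construction) the coupling runs simultaneously for all capacities $c\ge1$.

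\textbf{Consequences.} Taking $k=1$ gives $W_t^{(c)}=H^{(c)}_1(t)\le H^{(d)}_1(t)=W_t^{(d)}$. Since $W$ is nondecreasing in $t$ with unit jumps, $\tau_j^{(d)}\le\tau_j^{(c)}$ for $j\le c$. Finally, at forward round $k$ the remaining deck is the last $t=nc-k+1$ cards, and the greedy success probability is $W_t/t$. Hence
\[
S_{n,c}=\sum_{t=1}^{nc}\frac{W_t^{(c)}}{t}\le\sum_{t=1}^{nc}\frac{W_t^{(d)}}{t}\le\sum_{t=1}^{nd}\frac{W_t^{(d)}}{t}=S_{n,d}.
\]
The first inequality is the pathwise bound on $W$, and the second holds because the added terms are nonnegative.
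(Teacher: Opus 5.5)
Your proposal is correct and follows essentially the same route as the paper. The shared steps are: a common-uniform quantile coupling on sorted heights; reduction to tight partial sums; propagation of tightness through the tie block (your $b_g(k)\ge b_h(k)$ with $s_{b_h(k)}(h)=s_{b_h(k)}(g)$ is the paper's $r_x\le r_y$, $X_{r_x}=Y_{r_x}$); and monotonicity of $(ra-H)/(na-t)$ in the capacity $a$, which is your cross-multiplied inequality $kt(d-c)\le sn(d-c)$.

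Two differences are only cosmetic. The paper keeps labels and re-sorts rather than incrementing the leftmost tied entry, but this gives the same sorted process. And no Kolmogorov extension is needed, since one uniform sequence already drives every capacity simultaneously.
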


The compensator estimate is uniform in the multiplicity.
The following bounds retain useful information in particular regimes, but are not needed as separate main results.

\begin{rem}[Bounds in special regimes]\label{rem:weaker}
	On the way to Theorem~\ref{thm:var}, we prove two further bounds on $\var(S_{n,m})$, which complement Theorem~\ref{thm:var}.
	\begin{enumeratea}
		\item \emph{Universal bound.} There is a universal constant $C<\infty$ such that for all $n\geq 2$ and $m\geq 1$,
		\begin{align}\label{eq:varSuniv}
			\var(S_{n,m})\leq C m.
		\end{align}
		This estimate holds for every $n\geq2$.
		For large multiplicities, Theorem~\ref{thm:var} improves its $m$ term by a factor of $\log n$.
		Together with $\mu_{n,m}\geq m-1+\cH_n$, it gives
		$\var(S_{n,m})/\mu_{n,m}^2\leq C/\cH_n$.
		That bound proves the compensator law of large numbers and the martingale CLT for every $m$.
		It also proves the score CLT when $m=o(\log n)$.
		Section~\ref{ssec:univ} gives a short proof.

		\item \emph{Logarithmic multiplicity.} For every $\gl>0$ there exist $C_\gl<\infty$ and $n_0(\gl)$ such that for all $n\geq n_0(\gl)$ and all $1\leq m\leq \gl\log n$,
		\begin{align}\label{eq:varSlog}
			\var(S_{n,m})\leq C_\gl\bigl(1+\log m\bigr)^2.
		\end{align}
		This bound retains the dependence on $m$.
		It gives $\var(S_{n,m})=O(1)$ for fixed $m$.
		Section~\ref{ssec:logregime} proves it and uses it in Theorem~\ref{thm:var}.
	\end{enumeratea}
\end{rem}

\subsection{Heuristics and idea of the proof}\label{ssec:heuristic}
The score CLT rests on three observations. Recall that $N=nm$.

\smallskip\noindent i.~\textbf{Martingale noise.}
Each score increment is conditionally Bernoulli and its centered conditional variance is $\pi^*_{k-1}(1-\pi^*_{k-1})$.
The squared probabilities have a negligible sum compared with $\mu_{n,m}$.
Thus the predictable quadratic variation is close to $S_{n,m}$.
Concentration of $S_{n,m}$ then yields a martingale CLT.
To transfer this CLT to the score, we need the stronger bound
$\var(S_{n,m})=o(\mu_{n,m})$.

\smallskip\noindent ii.~\textbf{Time reversal.}
We can read the deck from the bottom and count cards of each type. Let $\widetilde X_i(t)$ be the number of cards of type $i$ among the final $t$ cards. The resulting process has $n$ urns, each of capacity $m$. If $W_t$ is its maximal occupancy at time $t$, then
\[
	S_{n,m}=\sum_{t=1}^{N} {W_t}/{t}
	=m\cH_{N}-\sum_{j=1}^{m}\cH_{\tau_j-1},
	\text{ where } \tau_j:=\min\{t:W_t=j\}.
\]
Minkowski's inequality reduces compensator concentration to estimates for each $\log\tau_j$.

\smallskip\noindent iii.~\textbf{Hitting-time concentration.}
Let $p_j(t):=\pr(\widetilde X_1(t)\geq j)$ and $h_t(j):=\pr(\widetilde X_1(t)=j)$. The union bound and negative association give
\[
	\pr(\tau_j\leq t)\leq np_j(t),\text{ and }
	\pr(\tau_j>t)\leq e^{-np_j(t)}.
\]
The deterministic scale for $\tau_j$ is thus given by $t_j=\min\{t:np_j(t)\geq1\}$.
The exact identity
\[
	\frac{p_j(t+1)-p_j(t)}{p_j(t+1)}
	=\frac{j}{t+1}\phi_j(t+1),\quad
	\phi_j(u):=\frac{h_u(j)}{p_j(u)}
\]
controls the growth of the tail probability.
Bounds on $j\phi_j$ yield concentration on the logarithmic time scale.
There are three relevant estimates.
\begin{itemize}[leftmargin=2em]
	\item For every capacity, $j\phi_j\gtrsim\sqrt j$ up to time $nj$. This gives $\var(\log\tau_j)\lesssim j^{-1}$ and $\var(S_{n,m})\lesssim m$.
	\item For $j\leq\gl\log n$, the threshold mean occupancy is at most $\rho j$, with $\rho=\rho(\gl)<1$. The successive mass ratios above $j$ are bounded above by a constant strictly smaller than one. Hence $j\phi_j\gtrsim_\gl j$ in a suitable window and $\var(\log\tau_j)\lesssim_\gl j^{-2}$. For fixed $j\geq2$, the limiting variance is $\pi^2/(6j^2)$.
	\item For $2\log n<j<m$, interior and saturation estimates give $\var(\log\tau_j)\lesssim(j\log n)^{-1}$. For the terminal level $j=m$, the bound improves to $C/m^2$.
\end{itemize}
Summing the standard deviations gives
\[
	\sqrt{\var(S_{n,m})}
	\lesssim\log\log n+\sqrt{m/\log n}.
\]
This proves the variance theorem.
The mean theorem uses an exact beta-integral representation.
It involves the maximum of independent $\Bin(m,p)$ variables at each fixed $p$.
The region $mp\leq\log n$ gives the logarithmic contribution, and 
binomial moderate deviations give the Gaussian contribution.

\subsection{Related literature}\label{ssec:lit}
Blackwell and Hodges~\cite{BH57} first studied the two-type game.
They computed its optimal mean score
\[
	\E T_{2,m}=m+\frac{2^{2m-1}}{\binom{2m}{m}}-\frac12=m+\frac{\sqrt{\pi m}}{2}+\cO(1).
\]
Diaconis and Graham~\cite{DG81} treated the general $n$ case.
They proved greedy optimality and obtained the mean asymptotics for fixed $n$ and $m\to\infty$.

\begin{thm} [Diaconis and Graham~\cite{DG81}] Let $n\geq 2$ be fixed and $m\to\infty$. Then
	\[
		\E T_{n,m} = m+\frac{\pi}{2}\,\cZ_n\sqrt{m}+o_n(\sqrt{m}),
	\]
	where $\cZ_n$ is the expected value of the maximum of $n$ independent standard Gaussian random variables.
\end{thm}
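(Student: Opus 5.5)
The statement is the Diaconis--Graham theorem, and we recover it from Proposition~\ref{prop:fixedn}. The expansion is equivalent to $(\E T_{n,m}-m)/\sqrt m\to\frac\pi2\cZ_n$. The plan has three steps: write the excess mean through the compensator, prove a functional limit for that compensator, and upgrade distributional convergence to convergence of means by a uniform $L^2$ bound.

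\textbf{Step 1: time reversal.} Since $\E T_{n,m}=\E S_{n,m}$, it suffices to study $S_{n,m}$. By the reversal identity in Section~\ref{ssec:heuristic},
\[
S_{n,m}-m=\sum_{t=1}^{nm}\frac{W_t-t/n}{t},
\]
because $\sum_t (t/n)/t=m$. Set $t=\lfloor nmu\rfloor$. Then $\widetilde X_i(t)-t/n$ is the centered count of type $i$ among the last $t$ cards. By Donsker's theorem for sampling without replacement (a multivariate hypergeometric path), the process $m^{-1/2}(\widetilde X_i(nmu)-mu)_{i\le n}$ converges to $(B_i(u)-\overline B(u))_{i\le n}$, with $B_i$ independent Brownian bridges. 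This identifies the limit covariance $u(1-u)(\delta_{ij}-1/n)$. The map $x\mapsto\max_i x_i$ is continuous, so for each fixed $\varepsilon>0$ the Riemann sum over $u\in[\varepsilon,1]$ converges to $\int_\varepsilon^1 u^{-1}\max_i(B_i-\overline B)\,du$.

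\textbf{Step 2: the singularity at $u=0$.} The piece $t\le\varepsilon nm$ needs a bound uniform in $m$. Use
\[
0\le W_t-t/n\le\sum_i\bigl|\widetilde X_i(t)-t/n\bigr|,\qquad \E\bigl(\widetilde X_i(t)-t/n\bigr)^2\le t/n .
\]
Minkowski's inequality then gives
\[
\Bigl\|\sum_{t\le\varepsilon nm}\frac{W_t-t/n}{t}\Bigr\|_{L^2}\le n\sum_{t\le \varepsilon nm}t^{-1/2}n^{-1/2}\le C_n\sqrt{\varepsilon m}.
\]
This is $o(\sqrt m)$ uniformly as $\varepsilon\to0$. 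The same estimate shows the limit integral exists in $L^2$. By the usual approximation argument, $(S_{n,m}-m)/\sqrt m\Rightarrow\mathcal B_n$.

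\textbf{Step 3: means.} Steps 1 and 2 together give $\sup_m\E\bigl((S_{n,m}-m)^2/m\bigr)<\infty$, so the family is uniformly integrable. Distributional convergence therefore implies $(\E S_{n,m}-m)/\sqrt m\to\E\mathcal B_n$. Fubini gives
\[
\E\mathcal B_n=\int_0^1 u^{-1}\E\max_i\bigl(B_i(u)-\overline B(u)\bigr)\,du .
\]
At fixed $u$, write $B_i(u)=\sqrt{u(1-u)}\,Z_i$. Then $\max_i(Z_i-\overline Z)=\max_i Z_i-\overline Z$, and $\E\overline Z=0$, so
\[
\E\max_i\bigl(B_i(u)-\overline B(u)\bigr)=\sqrt{u(1-u)}\,\cZ_n .
\]
Finally,
\[
\int_0^1 u^{-1/2}(1-u)^{1/2}\,du=B\bigl(\tfrac12,\tfrac32\bigr)=\frac\pi2 ,
\]
which yields the constant $\frac\pi2\cZ_n$.

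The main obstacle is the functional CLT for the multivariate hypergeometric path together with the uniform control near $u=0$. I would handle the functional CLT through finite-dimensional convergence plus a tightness bound on moments of increments, using negative association. The boundary near $u=1$ causes no trouble, since the integrand $u^{-1}$ is bounded there.
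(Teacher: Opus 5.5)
Your proposal is correct and follows essentially the same route as the paper, which obtains this theorem as the first-moment part of Proposition~\ref{prop:fixedn}. The common steps are time reversal to $\sum_t (W_t-t/n)/t$, a Brownian-bridge limit for the centered occupancies, an $L^2$ Minkowski bound near $u=0$, uniform integrability, and the beta integral $\int_0^1\sqrt{(1-u)/u}\,du=\pi/2$. The uniform second-moment bound you need in Step 3 comes from your Step 2 estimate taken with $\varepsilon=1$, not from Step 1. The only real difference is how the bridge limit is obtained: the paper derives it from independent uniform marks and empirical-process inversion on compact subintervals, so it also needs an estimate near $u=1$, whereas your finite-population invariance principle on all of $[0,1]$ makes that endpoint unnecessary.
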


For fixed $m$ and $n\to\infty$, He and Ottolini~\cite{He_2023} proved the following.

\begin{thm}[He and Ottolini~\cite{He_2023}] Let $m$ be fixed and $n\to\infty$. Then
	\[
		\E T_{n,m} = \cH_m \cH_n + \sum_{j=1}^{m-1} \frac{1}{j} \log \binom{m}{j} + \cO_m\left((\log n)(\log n/n)^{1/m}\right).
	\]
\end{thm}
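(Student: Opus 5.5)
We would prove this through the reversed-deck representation of the compensator, using $\E T_{n,m}=\E S_{n,m}$. From Section~\ref{ssec:heuristic},
\[
\E T_{n,m}=m\cH_{nm}-\sum_{j=1}^{m}\E\,\cH_{\tau_j-1},
\]
with $\tau_1=1$, so the $j=1$ term vanishes. Since $m$ is fixed, only the finitely many levels $2\le j\le m$ matter. Writing $\cH_{t-1}=\log t+\gamma+O(1/t)$, and noting that $\tau_j\ge j$ tends to infinity in probability by Proposition~\ref{prop:exp}, the task reduces to computing $\E\log\tau_j$ up to $o(1)$, together with a quantitative error.

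\textbf{Step 1: first-order constants.} By Proposition~\ref{prop:exp},
\[
\log\tau_j=\tfrac{j-1}{j}\log n+\tfrac1j\log\bigl(j!/\kappa_{m,j}\bigr)+\tfrac1j\log E_n,
\]
where $E_n\Rightarrow\mathrm{Exp}(1)$ and $\E\log\mathrm{Exp}(1)=-\gamma$. If the convergence can be upgraded to convergence of $\E\log E_n$, then
\[
\E T=m(\log n+\log m+\gamma)-\sum_{j=2}^m\Bigl[\tfrac{j-1}{j}\log n+\tfrac1j\log\tfrac{j!\,m^j}{(m)_j}-\tfrac{\gamma}{j}+\gamma\Bigr]+o(1).
\]
The coefficient of $\log n$ is $m-\sum_{j=2}^m(1-1/j)=\cH_m$. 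Combining $\cH_m\log n$ with the $\gamma$ terms gives $\cH_m\cH_n$, since $\gamma$ collects with total weight $m-(m-1)+(\cH_m-1)=\cH_m$. The remaining constant is
\[
m\log m-\sum_{j=2}^m\tfrac1j\bigl(\log j!+j\log m-\log (m)_j\bigr).
\]
We would then verify by a direct telescoping or induction on $m$ that this equals $\sum_{j=1}^{m-1}\frac1j\log\binom mj$. The identity $\log\binom mj=\log(m)_j-\log j!$ together with the $j=m$ term should make this straightforward bookkeeping.

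\textbf{Step 2: uniform integrability of $\log\tau_j$.} For the upper tail we use $\pr(\tau_j>t)\le e^{-np_j(t)}$. For the lower tail we use $\pr(\tau_j\le t)\le np_j(t)\lesssim n t^j/n^j$ for $t\ll n$. Both tails of $\log\tau_j-\frac{j-1}{j}\log n$ are then exponential on the logarithmic scale, so all moments are bounded. Hence $\E\log E_n\to-\gamma$, which gives the statement with an $o(1)$ error.

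\textbf{Step 3: the error rate (main obstacle).} This is the hard part. The rate $(\log n)(\log n/n)^{1/m}$ requires a quantitative version of Proposition~\ref{prop:exp}. We would compute $np_j(t)$ exactly via the hypergeometric law, $p_j(t)=\binom{m}{j}(t)_j/(nm)_j\,(1+O(t/n))$ for $t\ll n$, and compare $\pr(\tau_j>t)$ with $\exp(-np_j(t))$ using the negative-association upper bound and a second-moment (Janson-type) lower bound. At $t\asymp n^{(j-1)/j}$, the relative corrections are powers of $t/n\asymp n^{-1/j}$. The worst case is $j=m$, which produces $n^{-1/m}$. Logarithmic losses arise from truncating the tails at $t\approx(n^{j-1}\log n)^{1/j}$, which accounts for the $(\log n)^{1/m}$ factor. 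Multiplying by the $O(\log n)$ range of $\log\tau_j$ in the tail region then yields the stated error term.
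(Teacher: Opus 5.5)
Your Steps 1--2 are correct and match the only derivation of this formula in the paper. The theorem itself is quoted from He and Ottolini; the remark following Corollary~\ref{cor:fixedmS} recovers it with an $o(1)$ error by inserting \eqref{eq:logTjasymp} (Proposition~\ref{prop:exp} plus the uniform integrability from Lemma~\ref{lem:tailwindow}) into \eqref{eq:STj}. Your constant bookkeeping is right and needs no induction: the constant is $\log m+\sum_{j=2}^{m}\frac1j\log\binom mj$, and the $j=m$ term vanishes. One small omission in Step 2: for the upper tail you need a lower bound $np_j(t)\gtrsim_m (t/b_j)^j$ for $t\le\varepsilon n$, with $b_j:=(j!\,n^{j-1}/\kappa_{m,j})^{1/j}$, plus monotonicity of $p_j$ beyond that range. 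The upper bound you state is not enough.

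The gap is Step 3, which carries the entire content of the stated error term and is only a heuristic. The tools you name do not supply what is missing. Janson's inequality, and the Harris-type bound $\pr(Z=0)\ge\prod_i(1-p_i)$ behind its lower half, require a product space. The occupancy vector is multivariate hypergeometric and negatively associated, so it yields only $\pr(\tau_j>t)\le(1-p_j(t))^n$, the direction you already have. A second-moment argument bounds $\pr(Z>0)$ from below, not $\pr(Z=0)$. So you are missing a lower bound on $\pr(\tau_j>t)$ that matches $e^{-np_j(t)}$ with an explicit error uniform in $t$. One way to get it is a Stein--Chen bound for negatively related indicators, $d_{TV}(Z,\mathrm{Poi}(\lambda))\le(1-e^{-\lambda})(1-\var Z/\lambda)$, with $\var Z$ taken from the factorial-moment identity in the proof of Proposition~\ref{prop:exp}. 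You would also still have to carry out, not just assert, the passage from such a bound to an error for $\E\cH_{\tau_j-1}$: integration against $dt/t$, truncation at $t_{\max}$, and comparison of $np_j(t)$ with $(t/b_j)^j$.

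A much shorter route avoids Poisson approximation entirely. Lemma~\ref{lem:meanintegral} and independence give $\E T_{n,m}=\sum_{j=1}^m\int_0^1\{1-(1-f_j(p))^n\}\,dp/p$, where $f_j(p)=\pr(\Bin(m,p)\ge j)$ is an increasing bijection of $[0,1]$. Let $V$ be the minimum of $n$ independent uniforms; then $1-(1-f_j(p))^n=\pr(f_j^{-1}(V)\le p)$, so the $j$-th integral equals $\E[-\log f_j^{-1}(V)]$. Since $f_j(p)=\binom mj p^j e^{O_m(p)}$ on $[0,1]$, one has $-\log f_j^{-1}(v)=\frac1j\bigl(-\log v+\log\binom mj\bigr)+O_m(v^{1/j})$. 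Together with $\E[-\log V]=\cH_n$ and $\E V^{1/j}\le Cn^{-1/j}$, this gives the theorem with the stronger error $O_m(n^{-1/m})$.
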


Ottolini and Steinerberger~\cite{OS22} considered the case where both parameters are large, with $m$ growing at least polylogarithmically in $n$.
\begin{thm}[Ottolini and Steinerberger~\cite{OS22}]
	Let $c,\eps > 0$, and let $n \to \infty$ with $m\ge  c(\log n)^{3+\eps}$. Then
	\[
		\E T_{n,m}= m+ \frac{\pi}{\sqrt{2}}\sqrt{m\log n}+ o\bigl(\sqrt{m \log n}\bigr).
	\]
\end{thm}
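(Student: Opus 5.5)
Since Theorem~\ref{thm:allmean} is stated earlier, I will obtain this statement as a corollary of it rather than redo the analysis of \cite{OS22}. The hypothesis $m\ge c(\log n)^{3+\eps}$ forces $m\to\infty$ and
\[
\frac{m}{(\log n)(\log\log n)^2}\ \ge\ c\,\frac{(\log n)^{2+\eps}}{(\log\log n)^2}\longrightarrow\infty ,
\]
so we are in the third regime of Theorem~\ref{thm:allmean}.

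The steps are as follows.
\begin{enumerate}
\item Since $m\ge\log n$ eventually, we have $\min\{m,\log n\}=\log n$, and the first term in \eqref{eq:allmean} equals $\log n\,\log\log n$.
\item Compare it with the Gaussian term:
\[
\frac{\log n\,\log\log n}{\sqrt{m\log n}}=\sqrt{\frac{(\log n)(\log\log n)^2}{m}}\longrightarrow0 .
\]
\item Hence the right side of \eqref{eq:allmean} is $(1+o(1))\frac{\pi}{\sqrt2}\sqrt{m\log n}$. Subtracting gives
\[
\E T_{n,m}-m-\frac{\pi}{\sqrt2}\sqrt{m\log n}=o\bigl(\sqrt{m\log n}\bigr),
\]
which is the claim.
\end{enumerate}
In fact this argument only needs $m\gg(\log n)(\log\log n)^2$, which is weaker than the polylogarithmic threshold of \cite{OS22}.

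All the real content therefore sits in the proof of Theorem~\ref{thm:allmean} in this regime, so I would make sure that proof covers it. Its route is the exact beta-integral representation of $\E S_{n,m}$. Through the time reversal $S_{n,m}=\sum_t W_t/t$, this gives
\[
\E S_{n,m}-m=\int_0^1\frac{1}{p}\,\Bigl(\E\max_{i\le n}X_i(p)-mp\Bigr)\,dp ,
\]
where the $X_i(p)$ are i.i.d.\ $\Bin(m,p)$. Heuristically, $W_t$ is approximated by the maximum of independent binomials at $p=t/N$, and the Beta weights come from hypergeometric sampling; this identity is what I would verify carefully first. I would then split the integral at $mp\asymp\log n$.
\begin{itemize}
\item On $mp\gg\log n$ (and $1-p\gg \log n/m$), binomial moderate deviations give
\[
\E\max_{i\le n}X_i(p)-mp=(1+o(1))\sqrt{2\log n\,mp(1-p)} .
\]
Integrating with $\int_0^1\sqrt{(1-p)/p}\,dp=\pi/2$ then yields $\frac{\pi}{\sqrt2}\sqrt{m\log n}$.
\item The small-$p$ region $mp\lesssim\log n$ and the region $1-p\lesssim\log n/m$ (saturation) contribute only $O(\log n\log\log n)$, since there $\E\max$ exceeds $mp$ by at most $O(\log n/\log(\log n/(mp)))$ or is trivially at most $m(1-p)$.
\end{itemize}

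The main obstacle is uniformity of the moderate-deviation asymptotic for the maximum of $n$ binomials across the whole middle range of $p$, with an error that is $o(\sqrt{m p\log n})$ after integration against $dp/p$. I would first try Bernstein/Chernoff bounds for the upper bound. For the lower bound I would use a second-moment (Paley–Zygmund) argument with Cramér-type local estimates $\pr(X>mp+x)\approx e^{-x^2/(2mp(1-p))}$. These are valid while $x^3/(mp)^2\to0$, i.e.\ while $(\log n)^{3/2}\ll\sqrt{mp}$. This is exactly where the condition $m\gg(\log n)^{3}$ of \cite{OS22} arises. The finer relative-entropy tail would be needed to push down to $m\gg(\log n)(\log\log n)^2$.
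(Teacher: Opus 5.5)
Your deduction is correct and is how the paper's Theorem~\ref{thm:allmean} yields this result: it is that theorem's third regime, valid whenever $m\gg(\log n)(\log\log n)^2$, proved via the beta-integral representation (Lemma~\ref{lem:meanintegral}) split at $mp=\log n$ into Lemmas~\ref{lem:earlymean} and~\ref{lem:bulkmean}. One correction to your closing heuristic: Lemma~\ref{lem:bulkmean} gives $J_{n,m}\sim\frac{\pi}{\sqrt2}\sqrt{m\log n}$ as soon as $m/\log n\to\infty$, because the $(1-\varepsilon)$ slack means only an $o(\log n)$ error in the exponent is needed (the cubic Taylor remainder is $O((\log n)^{3/2}/\sqrt m)$), so Cram\'er-type $1+o(1)$ precision and the $(\log n)^3$ condition are never required, and the threshold $(\log n)(\log\log n)^2$ comes from comparison with the early term $I_{n,m}$, not from tail precision.
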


Ottolini and Tripathi~\cite{OT23} proved a Gaussian CLT for the score for fixed multiplicity.
Their theorem also supplies an explicit Berry--Esseen rate of $C_m(\log \log n)(\log n)^{-1/2}$.
\begin{thm}[Ottolini and Tripathi~\cite{OT23}]
	Let $m$ be fixed, $\mu_n=\E T_{n,m}$ and $\sigma_n^2=\var(T_{n,m})$.
	As $n\to\infty$,
	\[
		\mu_n\sim\sigma_n^2\sim\cH_m\cH_n,
		\quad (T_{n,m}-\mu_n)/\sigma_n\implies\N(0,1).
	\]
\end{thm}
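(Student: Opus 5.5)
The statement immediately above is the Ottolini--Tripathi theorem for fixed $m$. I do not intend to reproduce their argument. Instead I will derive it as the fixed-$m$ specialization of Theorem~\ref{thm:clt}, using the variance bound in Remark~\ref{rem:weaker}(b) and the He--Ottolini mean asymptotics quoted above.

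\textbf{Step 1: the mean.} By the He--Ottolini theorem, $\mu_n=\E T_{n,m}=\cH_m\cH_n+O_m(1)$. Since $\cH_n\to\infty$, this gives $\mu_n\sim\cH_m\cH_n$.

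\textbf{Step 2: decomposition.} I write $T_{n,m}-\mu_n=M_N+(S_{n,m}-\mu_n)$, where $M_N=T_{n,m}-S_{n,m}$ is the terminal martingale value.

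\textbf{Step 3: the compensator is negligible.} For fixed $m$ we have $m\le\gl\log n$ for any fixed $\gl$ once $n$ is large. Remark~\ref{rem:weaker}(b) then gives $\var(S_{n,m})\le C_\gl(1+\log m)^2=O_m(1)$. This is $o(\mu_n)$, so $(S_{n,m}-\mu_n)/\sqrt{\mu_n}\to0$ in $L^2$.

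\textbf{Step 4: the martingale variance.} The increments of $M$ are centered conditional Bernoulli variables, so
\[
\E M_N^2=\E\sum_k\pi^*_{k-1}(1-\pi^*_{k-1})=\mu_n-\E\sum_k(\pi^*_{k-1})^2 .
\]
By time reversal, $\pi^*_{k-1}=W_t/t$ with $t=N-k+1$. Since $W_t\le m$, we get $\sum_k(\pi^*_{k-1})^2\le m^2\sum_t t^{-2}=O_m(1)$. Hence $\var(M_N)\sim\mu_n$.

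\textbf{Step 5: variance of the score.} By Cauchy--Schwarz, $|\operatorname{Cov}(M_N,S_{n,m})|\le\sqrt{\var(M_N)\var(S_{n,m})}=O(\sqrt{\mu_n})=o(\mu_n)$. Combining with Steps 3 and 4 gives $\sigma_n^2=\var(T_{n,m})\sim\mu_n\sim\cH_m\cH_n$.

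\textbf{Step 6: the CLT.} I apply the martingale CLT, which is Theorem~\ref{thm:clt}\ref{clt:b}, to $M_N/\sqrt{\mu_n}$. Its two hypotheses are checked as follows.
\begin{itemize}
\item The normalized conditional variance $\sum_k\pi^*_{k-1}(1-\pi^*_{k-1})/\mu_n$ equals $S_{n,m}/\mu_n$ minus an $O_m(1/\mu_n)$ term, so it tends to $1$ in probability by Step 3.
\item The Lindeberg condition holds trivially, since each increment is bounded by $1$ and $\mu_n\to\infty$.
\end{itemize}
So $M_N/\sqrt{\mu_n}\implies\N(0,1)$. Adding the negligible $(S_{n,m}-\mu_n)/\sqrt{\mu_n}$ and replacing $\sqrt{\mu_n}$ by $\sigma_n$ via Slutsky gives $(T_{n,m}-\mu_n)/\sigma_n\implies\N(0,1)$.

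\textbf{Main obstacle.} Everything reduces to the compensator bound $\var(S_{n,m})=o(\mu_n)$. The fixed-$m$ case of Remark~\ref{rem:weaker}(b) rests on Proposition~\ref{prop:exp}: for each fixed level $j$, $\log\tau_j$ has bounded variance, and there are only finitely many levels. If one wanted to avoid Remark~\ref{rem:weaker}(b), the first thing I would try is to prove uniform integrability of $(\log\tau_j-\frac{j-1}{j}\log n)^2$ directly. The upper tail would come from the bound $\pr(\tau_j>t)\le e^{-np_j(t)}$, and the lower tail from the union bound $\pr(\tau_j\le t)\le np_j(t)$.
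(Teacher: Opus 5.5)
Your proposal is correct and matches how the paper recovers this result. It is Theorem~\ref{thm:clt}\ref{clt:c} specialized to a constant sequence $m_n\equiv m$, using the martingale decomposition, Proposition~\ref{prop:cltcond} and Cauchy--Schwarz for the covariance, combined with the fixed-$m$ mean asymptotics, exactly as the remark following Corollary~\ref{cor:ties} indicates. One small correction to your closing paragraph: the bound \eqref{eq:varSlog} rests on Proposition~\ref{prop:logTjlog}, not on Proposition~\ref{prop:exp}, and for fixed $m$ the universal bound \eqref{eq:varSuniv} already gives $\var(S_{n,m})=O_m(1)$.
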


Kuba and Panholzer~\cite{KP23} give exact distributions, joint limits and moment asymptotics for two-type decks.
In particular, $(T_{2,m}-m)/\sqrt m$ converges to the distribution with c.d.f. $1-e^{-x^2}$ for $x\geq0$. Ottolini and Tripathi~\cite{OT23} also treat unequal decks with bounded maximal multiplicity under additional balance hypotheses.

Different feedback rules lead to different prediction problems.
With no feedback, the cards are never revealed.
With partial feedback, the player learns only whether a guess was correct.
Diaconis, Graham, He and Spiro~\cite{DGHS22} studied partial feedback, and 
Diaconis, Graham and Spiro~\cite{DGS22} developed practical guessing strategies.
Luo~\cite{Luo26} recently settled two conjectures about extremal expectations under partial feedback.
Our results concern only the complete feedback rule.

\subsection{Notation}\label{ssec:nota}
Throughout, $N=nm$ and $[n]=\{1,\ldots,n\}$.
All logarithms are natural.
We use $\cH_k=\sum_{i=1}^{k}1/i$ and $\cH_0=0$.
The symbols $\pr$, $\E$, $\var$ and $\cov$ denote probability, expectation, variance and covariance.
For nonnegative integers $a,b$, $(a)_0=1$ and $(a)_b=a!/(a-b)!$ when $b\leq a$.
We set $(a)_b=0$ when $b>a$.
We also use $\binom ab=0$ outside $0\leq b\leq a$ and $0\log0=0$.

The law $\hypgeom(N,m,t)$ counts type-1 cards in a uniformly chosen sample of size $t$.
The population has $N$ cards, of which $m$ have type 1.
Its probability mass function is
\begin{align}\label{eq:hyppmf}
	h_t(k)=\binom mk\binom{N-m}{t-k}\bigg/\binom Nt.
\end{align}
The binomial convention covers the entire support.
The symbol $\|\cdot\|_r$ denotes the $L^r$ norm.
We write $\implies$ for convergence in distribution.
We use $a\lesssim b$ to mean $a\leq Cb$, $a\asymp b$ for two-sided comparison, and $a\ll b$, if $a/b\to 0$.
Constants $c,C$ may change between lines.
They are absolute unless a dependence is indicated.
The score is $T_{n,m}$.
The hitting times are denoted by $\tau_j$.

\subsection{Organization of the paper}\label{ssec:org}
The paper is organized as follows. Section~\ref{sec:mean} proves the mean theorem through the independent-binomial integral.
Section~\ref{sec:mart} gives the martingale decomposition.
It proves the CLT using Theorem~\ref{thm:var} and establishes the Kolmogorov bound.
Section~\ref{sec:btp} develops the backward urn process and proves the compensator variance bound.
Section~\ref{sec:hitting} proves the fixed-level hitting-time limits and the fixed-type compensator limit.

\section{Mean Score for Growing Multiplicities and Proof of Thoerem~\ref{thm:allmean}}\label{sec:mean}
The following result fills the gap between fixed-multiplicity mean asymptotics and the large-multiplicity result of~\cite{OS22}.
Theorem~\ref{thm:allmean} is proved below using an exact integral representation and separate estimates for its early and Gaussian contributions.

\subsection{The binomial integral}
Let $(Y_i(p))_{1\leq i\leq n}$ be i.i.d.~$\Bin(m,p)$ variables. Define
$Z(p):=\max_iY_i(p)$. Only their distribution at each fixed $p$ is needed below.
\begin{lem}\label{lem:meanintegral}
	For every $n,m\geq1$,
	\begin{align}\label{eq:meanintegral}
		\E T_{n,m}=\int_0^1\frac1p{\E Z(p)}\,dp,
		\quad
		\E T_{n,m}-m=\int_0^1\frac1p(\E Z(p)-mp)\,dp.
	\end{align}
\end{lem}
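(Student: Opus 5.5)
The plan is to use the time-reversal identity from the heuristics, $\E T_{n,m}=\E S_{n,m}=\sum_{t=1}^{N}\E W_t/t$. Here $W_t$ is the maximal occupancy among the last $t$ cards. This identity holds because, given the revealed top $N-t$ cards, the conditional success probability of the greedy guess is the maximal remaining multiplicity divided by the number $t$ of remaining cards. The remaining $t$ cards are a uniform random subset of the deck, so the counts $(\widetilde X_i(t))_i$ form a multivariate hypergeometric vector.

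The key step is to replace this discrete sum by an integral over the independent binomial vectors. We use Poissonization in time through a beta mixing. Let $U_1,\dots,U_N$ be i.i.d.\ uniform labels attached to the cards, and include card $c$ in the sample at level $p$ iff $U_c\le p$. Then $Y_i(p)$, the number of type-$i$ cards with label at most $p$, are i.i.d.\ $\Bin(m,p)$, so $Z(p)=\max_i Y_i(p)$ has exactly the law of $Z(p)$. Conditionally on the total count $K(p)=\sum_i Y_i(p)=t$, the selected set is a uniform $t$-subset. Hence $\E[Z(p)\mid K(p)=t]=\E W_t$, and
\[
\E Z(p)=\sum_{t=0}^{N}\binom Nt p^t(1-p)^{N-t}\,\E W_t .
\]
Dividing by $p$ and integrating uses the beta integral
\[
\int_0^1 \binom Nt p^{t-1}(1-p)^{N-t}\,dp=\binom Nt\frac{(t-1)!(N-t)!}{N!}=\frac1t
\]
for $t\ge1$. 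The $t=0$ term vanishes since $W_0=0$. Tonelli's theorem applies because all terms are nonnegative. This gives $\int_0^1 p^{-1}\E Z(p)\,dp=\sum_{t\ge1}\E W_t/t=\E T_{n,m}$.

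For the second identity, note that $\E Z(p)\ge \E Y_1(p)=mp$, so the integrand $p^{-1}(\E Z(p)-mp)$ is nonnegative and $\int_0^1 p^{-1}\,mp\,dp=m$. Subtracting $m$ is legitimate because both integrals are finite. Indeed, $Z(p)\le K(p)$ and $\E K(p)/p=N$, so the first integral is finite as well.

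The main point to verify carefully is the compensator identity, namely that the conditional success probability at round $k$ equals $W_{N-k+1}/(N-k+1)$ under greedy play with arbitrary tie-breaking. It holds because every maximizing type has the same success probability, so randomized tie-breaking does not change it. The remaining steps are routine.
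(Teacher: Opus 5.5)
Your proof is correct and follows the paper's argument. Both use the compensator identity $\E T_{n,m}=\sum_{t=1}^N \E W_t/t$, the fact that independent $\Bin(m,p)$ counts conditioned on their sum $t$ have the multivariate hypergeometric law of the reversed occupancy vector, and the beta integral $\binom Nt\int_0^1 p^{t-1}(1-p)^{N-t}\,dp=1/t$ with Tonelli. Your uniform-label realization and the explicit finiteness check before subtracting $m$ are harmless additions.
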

\begin{proof}
	Conditionally on $S_n(p)=\sum_{i=1}^nY_i(p)=t$, the count vector is multivariate hypergeometric with capacities $m$, hence its maximum has the law of $W_t$.
	Also $S_n(p)\sim\Bin(N,p)$, with $N=nm$. Since $W_0=0$, Tonelli's theorem and the beta integral give
	\[
		\int_0^1\frac{\E Z(p)}p\,dp
		=\sum_{t=1}^N\E W_t \ \binom Nt
		\int_0^1p^{t-1}(1-p)^{N-t}\,dp
		=\sum_{t=1}^N\frac{\E W_t}{t}=\E S_{n,m}.
	\]
	The equality $\E T_{n,m}=\E S_{n,m}$ and $\int_0^1mp\,dp/p=m$ finish the proof.
	Thus independence in~\eqref{eq:meanintegral} is exact after integration.
\end{proof}

\subsection{The early contribution}
For $\log n\geq2$, define $a:=\min(m,\log n)$ and
\[
	I_{n,m}=\int_0^a\frac1\lambda {\E Z(\lambda/m)}\,d\lambda.
\]
\begin{lem}\label{lem:earlymean}
	With absolute implied constants,
	\begin{align}\label{eq:earlymean}
		I_{n,m}=\begin{cases}
			        (\log n)\cH_m+O(m),           & m\leq \log n, \\
			        (\log n)\log\log n+O(\log n), & m\geq \log n.
		        \end{cases}
	\end{align}
\end{lem}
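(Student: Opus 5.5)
Throughout, $\lambda$ is the per-type mean occupancy, so $Y_i\sim\Bin(m,\lambda/m)$. The plan is to write $\E Z(\lambda/m)=\sum_{j\ge1}\pr(Z\ge j)$ with
\[
\pr(Z\ge j)=1-(1-q_j(\lambda))^n,\qquad q_j(\lambda):=\pr(\Bin(m,\lambda/m)\ge j),
\]
and to swap the sum and the integral:
\[
I_{n,m}=\sum_{j=1}^m\int_0^a\frac{1-(1-q_j(\lambda))^n}{\lambda}\,d\lambda .
\]
For each level $j$ we then locate the threshold $\lambda_j$ at which $nq_j(\lambda_j)\approx1$. Below $\lambda_j$ the integrand is $\asymp nq_j(\lambda)/\lambda$, which is small, and above it the integrand is $\approx1/\lambda$. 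So the $j$th term should be $\log(a/\lambda_j)^+ +O(\text{error})$.

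\textbf{Thresholds for small $\lambda$.} For $\lambda\le a\le\log n$ and $j$ not too large, we have $q_j(\lambda)\approx\binom mj(\lambda/m)^j\approx\lambda^j/j!$, up to a factor $\kappa_{m,j}\in[j!/j^j,1]$ and a factor $(1-\lambda/m)^{m-j}\ge e^{-\lambda}$-type correction. Hence $\log\lambda_j=\frac1j\bigl(\log n+\log\kappa_{m,j}-\log j!\bigr)+O(\lambda_j/j)$. The $j$th term is therefore
\[
\log a-\frac{\log n}{j}+O\!\Bigl(\frac{\log j!+\lambda_j}{j}\Bigr),
\]
valid when $\lambda_j\le a$. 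When $\lambda_j>a$, the term is at most $\int_0^a nq_j/\lambda\,d\lambda\lesssim nq_j(a)/j$, which decays geometrically in $j$ beyond the threshold level. For the lower tail of the threshold (where $nq_j$ is large) we use $1-(1-q)^n\ge1-e^{-nq}$, and $q_j$ grows like $\lambda^j$. Consequently $\int$ over the window around $\lambda_j$ contributes $O(1/j)$, as in the $\phi_j$ heuristic.

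\textbf{Case $m\le\log n$.} Here $a=m$. For every $j\le m$ the threshold satisfies $\lambda_j\lesssim n^{1/j}$, but we only integrate up to $m$. A simpler route for this case is to compare directly with the fixed-$p$ quantity: for $\lambda\ge C$ we have $\E Z\ge m-O(\cdot)$, since $\pr(Y=m)=(\lambda/m)^m$ and $n(\lambda/m)^m$ is large. Summing the main terms gives
\[
I_{n,m}=\sum_{j}\Bigl(\frac{\log n}{j}+O(1)\Bigr)=(\log n)\cH_m+O(m).
\]
Here I would use $\int_0^a\frac{\min(1,n\lambda^j/j!)}{\lambda}\,d\lambda$ as a proxy, which gives exactly $\log a-\frac1j\log(j!/n)$ capped at $\log(a)+\frac{\log n}{j}$. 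The reading that matches the stated main term $\frac{\log n}{j}$ is that $\lambda_j\approx(j!/n)^{1/j}\le1$, so the $j$th contribution is $\log(a/\lambda_j)=\log a+\frac{\log n-\log j!}{j}$. Summing over $j\le m$ gives
\[
(\log n)\cH_m+m\log m-\log(m!)\cdot O(1),
\]
and Stirling reduces the remainder to $O(m)$ once the $\log a$ terms are combined with $-\frac1j\log j!\approx-\log j+1$. This cancellation, namely $\sum_{j\le m}(\log m-\log j+1)=O(m)$, is the main bookkeeping step.

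\textbf{Case $m\ge\log n$.} Here $a=\log n$. Levels with $j\le a$ contribute
\[
\sum_{j\le a}\frac{\log n}{j}+O(a)=(\log n)\log\log n+O(\log n).
\]
Levels with $j>a$ have $\lambda_j\ge a$ up to constants: by Chernoff, $q_j(\lambda)\le(e\lambda/j)^j$, so $nq_j(a)\le n(ea/j)^j$, which decays geometrically for $j\ge e^2a$ since $(1/e)^j\le n^{-1}$ there. These levels therefore add $O(\log n)$ in total.

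\textbf{Main obstacle.} The hard part is uniformly controlling the binomial-versus-Poisson approximation of $q_j$ when $m$ is small compared with $j$, where $\kappa_{m,j}$ departs from $1$. This is handled by the two-sided bounds $\frac{j!}{j^j}\le\kappa\le1$, each of which costs only $O(1)$ per level and hence $O(m)$ or $O(\log n)$ in total. A second delicate point is the window around $\lambda_j$, which must cost $O(1)$ per level uniformly in $j$.
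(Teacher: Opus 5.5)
Your outline is the paper's proof. It uses the same level decomposition $\E Z(\lambda/m)=\sum_{j\le m}\bigl(1-(1-q_j(\lambda))^n\bigr)$ and the same upper bound: $1-(1-q_j)^n\le\min\{1,nq_j\}$ with $q_j\le\binom mj(\lambda/m)^j=\kappa_{m,j}\lambda^j/j!\le(e\lambda/j)^j$, integrated exactly. The Stirling bookkeeping $\sum_{j\le a}\log(ea/j)=O(a)$ is also the paper's. For $m\ge\log n$, you split the levels $j>a$ into $O(\log n)$ levels costing $O(1)$ each and a negligible tail $\sum_j\frac nj(ea/j)^j$; the paper cuts at $A\log n$ with $A>e^3$ instead of at $e^2a$. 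So the route works. As written, however, several intermediate claims are wrong, and the lower bound is never actually carried out.

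\textbf{Corrections.} Solving $n\kappa_{m,j}\lambda_j^j/j!=1$ gives $\log\lambda_j=\frac1j(\log j!-\log\kappa_{m,j}-\log n)$. Hence $\log(a/\lambda_j)=\log a+\frac{\log n}{j}-\frac{\log j!}{j}+\frac1j\log(1/\kappa_{m,j})$, not $\log a-\frac{\log n}{j}+\cdots$, and $\lambda_j\lesssim jn^{-1/j}$, not $n^{1/j}$. The term $\frac{\log j!}{j}\approx\log j-1$ cannot sit inside an error term: summed over $j\le\log n$ it is of order $\log n\,\log\log n$. It must be kept and cancelled against $\log a$, as you eventually do. The claim $\lambda_j\le 1$ is false, since $j\asymp\log n$ gives $\lambda_j\asymp\log n$. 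All you use it for is $\lambda_j\le a$, which follows from $j!/\kappa_{m,j}=m^j/\binom mj\le j^j$, giving $\lambda_j\le jn^{-1/j}\le j\le a$. Your ``simpler route'' is also false, since $n(C/m)^m\to0$ when $m=\log n$. Finally, $(1-\lambda/m)^{m-j}\ge e^{-\lambda}$ fails in general; what holds is $(1-\lambda/m)^{m-j}\ge e^{-2\lambda}$ for $\lambda\le m/2$.

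\textbf{Lower bound.} You do not need to locate $\lambda_j$ or analyse a window around it, because the target is only $\frac{\log n}{j}-O(1)$ per level. The paper restricts to $\lambda\le j/2$. There the single mass at $j$ gives $q_j(\lambda)\ge(\lambda/j)^je^{-2\lambda}\ge(\lambda/(ej))^j$. It then uses $1-(1-q)^n\ge1-e^{-nq}$ and substitutes $u=n(\lambda/(ej))^j$:
\[
\int_0^a\bigl(1-(1-q_j)^n\bigr)\frac{d\lambda}{\lambda}\ge\frac1j\int_0^{n(2e)^{-j}}\frac{1-e^{-u}}{u}\,du\ge\frac{\log n}{j}-\log(2e)-\frac Cj .
\]
This holds for every $j\le a$, trivially when the right side is negative; the range $[j/2,a]$ is simply discarded. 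Summing over $j\le m$, respectively $j\le\lfloor\log n\rfloor$, gives both lower bounds with no threshold location at all. That is the step your ``delicate window'' was standing in for.
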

\begin{proof}
	Write $f_j(\lambda)=\pr(\Bin(m,\lambda/m)\geq j)$. Independence gives
	\[
		\E Z(\lambda/m)=\sum_{j=1}^m(1-(1-f_j(\lambda))^n).
	\]
	For $0\leq\lambda\leq j/2$, the single mass at $j$, the inequality
	$\binom mj\geq(m/j)^j$, and $\log(1-x)\geq-2x$ for $0\leq x\leq1/2$ give
	\[
		f_j(\lambda)\geq\binom mj(\lambda/m)^j(1-\lambda/m)^{m-j}
		\geq(\lambda/j)^j e^{-2\lambda}\geq(\lambda/(ej))^j.
	\]
	Therefore, for every integer $1\leq j\leq a$,
	\begin{align*}
		\int_0^a\{1-(1-f_j(\lambda))^n\}\frac{d\lambda}{\lambda}
		 & \geq\int_0^{j/2}\{1-e^{-n(\lambda/(ej))^j}\}\frac{d\lambda}{\lambda} \\
		 & =\frac1j\int_0^{n(2e)^{-j}}\frac{1-e^{-u}}u\,du
		\geq\frac{\log n}{j}-\log(2e)-\frac Cj.
	\end{align*}
	We used $\int_0^z(1-e^{-u})du/u\geq\log z-C$ for $z>0$.
	To see this,
	for $z\leq1$ the right side can be taken nonpositive, and for $z\geq1$ split the integral at one.
	Summing to $m$ when $m\leq \log n$, or to $\lfloor \log n\rfloor$ when $m\geq \log n$, gives the required lower bounds.

	For the upper bounds, the factorial-moment inequality gives
	$f_j(\lambda)\leq\binom mj(\lambda/m)^j\leq(e\lambda/j)^j$.
	Thus the integrand at level $j$ is at most $\min\{1,n(e\lambda/j)^j\}$.
	If $r_j=(j/e)n^{-1/j}$, direct integration gives
	\[
		\int_0^a\min\{1,(\lambda/r_j)^j\}\frac{d\lambda}{\lambda}
		\leq\left(\frac{\log n}{j}+\log\frac{ea}{j}\right)_++\frac1j
	\]
    where $x_+:=\max \{x,0\}$.
    
	For $m\leq \log n$, $r_j<m$ for $j\leq m$, and summing the exact expression gives
	\[
		I_{n,m}\leq (\log n)\cH_m+\sum_{j=1}^m\log(em/j)+\cH_m
		=(\log n)\cH_m+O(m),
	\]
	using $\log(m!)\geq m\log m-m$.
	For $m\geq \log n$, sum the displayed upper bound first over $j\leq A\log n$, with a fixed $A>e^3$.
	The sum of $\log n/j$ is $(\log n)\log\log n+O(\log n)$, and
	$\sum_{j\geq1}(\log(e\log n/j))_+=O(\log n)$.
	For $j>A\log n$, the exact integral is $n(e\log n/j)^j/j\leq e^{-j}/j$.
	Summing this tail proves $I_{n,m}\leq (\log n)\log\log n+O(\log n)$.
\end{proof}

\subsection{The Gaussian contribution}
For $m\geq \log n$ define
\[
	J_{n,m}:=\int_{\log n/m}^1\frac{\E Z(p)-mp}{p}\,dp.
\]
\begin{lem}\label{lem:bulkmean}
	As $n\to\infty$, if $m/\log n\to\infty$, then
	\begin{align}\label{eq:bulkmean}
		J_{n,m}\sim\frac{\pi}{\sqrt2}\sqrt{m\log n}.
	\end{align}
	If $1\leq m/\log n\leq R$ for a fixed $R$, then $0\leq J_{n,m}\leq C_R \log n$.
\end{lem}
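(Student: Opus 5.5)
We work at each fixed $p$ with the standardized maximum. Write $\sigma_p^2:=mp(1-p)$ and $Y_i(p)=mp+\sigma_p G_i(p)$, so that
\[
	\E Z(p)-mp=\sigma_p\,\E\max_{i\le n}G_i(p).
\]
The target constant comes from the Gaussian heuristic $\E\max_i G_i\approx\sqrt{2\log n}$. This gives
\[
	\int_0^1\frac{\sqrt{mp(1-p)}\sqrt{2\log n}}{p}\,dp=\sqrt{2m\log n}\,B(1/2,3/2)=\sqrt{2m\log n}\cdot\frac{\pi}{2}=\frac{\pi}{\sqrt2}\sqrt{m\log n}.
\]
The lower endpoint $\log n/m\to0$ removes only a negligible part of this integral, of order $\sqrt{m\log n}\sqrt{\log n/m}=\log n=o(\sqrt{m\log n})$. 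The proof therefore reduces to showing that $\E\max_iG_i(p)=(1+o(1))\sqrt{2\log n}$ uniformly in $p\in[\varepsilon_n,1-\varepsilon_n]$, where the window $\varepsilon_n\to0$ is chosen so that $mp(1-p)\gg\log n$ on it. We also need to show that the two end pieces contribute $o(\sqrt{m\log n})$.

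\emph{Upper bound.} We use the Chernoff inequality $\pr(\Bin(m,p)\ge mp+x)\le\exp(-x^2/(2(\sigma_p^2+x/3)))$ (Bernstein). A union bound then gives
\[
	\E Z(p)-mp\le x_*+\int_{x_*}^\infty n\,\pr(Y_1-mp\ge x)\,dx,
\]
where $x_*$ solves $n e^{-x^2/(2\sigma_p^2+2x/3)}=1$. This yields $x_*\le\sqrt{2\sigma_p^2\log n}+\tfrac23\log n$ and a tail of order $\sigma_p/\sqrt{\log n}+1$. Dividing by $p$ and integrating over $[\log n/m,1]$, the correction $\tfrac23\log n$ contributes $\tfrac23\log n\cdot\log(m/\log n)$. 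This is $o(\sqrt{m\log n})$ when $m/\log n\to\infty$, since $\log r\ll\sqrt r$. The same computation gives $J_{n,m}\le C_R\log n$ when $m/\log n\le R$. Nonnegativity of $J$ follows from $\E Z(p)\ge\E Y_1=mp$. For $p$ near $1$ we use the symmetric fact that $m-Y_i\sim\Bin(m,1-p)$, so the same bound holds with $\sigma_p$, which handles $\int$ near $1$.

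\emph{Lower bound.} Here we need a matching lower tail, the main obstacle: binomial moderate deviations uniform in $p$. For $p\in[\varepsilon_n,1-\varepsilon_n]$ with $\sigma_p^2\ge \log n/\varepsilon_n$, set $x=(1-\delta)\sqrt{2\sigma_p^2\log n}$. Then $x/\sigma_p^2\to0$, and a Cramér-type lower bound (via exponential tilting, or a Stirling estimate of the single mass at $\lceil mp+x\rceil$ times a window of width $\sigma_p/\sqrt{\log n}$) gives
\[
	\pr(Y_1-mp\ge x)\ge n^{-(1-\delta)^2(1+o(1))}.
\]
Hence $\pr(Z(p)-mp\ge x)\ge1-\exp(-n^{\delta+o(1)})$. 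Combining this with $Z(p)\ge Y_1$ and $\E|Y_1-mp|\le\sigma_p$ gives
\[
	\E Z(p)-mp\ge(1-\delta)\sqrt{2\sigma_p^2\log n}-o(\sigma_p).
\]
Integrating over the window, whose Beta mass tends to the full $\pi/2$, gives the lower bound after $\delta\downarrow0$. I would first try the direct Stirling/local-CLT bound on the hypergeometric-free binomial mass, since $Y_i$ are genuinely independent here. Uniformity in $p$ near the edges is the only delicate point, and discarding the edge windows costs only $o(\sqrt{m\log n})$ by the upper-bound estimate.
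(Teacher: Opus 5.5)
Your proposal is correct and follows essentially the paper's argument. For the upper half, you use a union bound with the Bernstein tail and integrate it, whereas the paper applies Jensen to the Bernstein MGF to get $\E Z(p)-mp\le\sqrt{2q\log n}+\log n/3$ directly; your extra $O(\sigma_p/\sqrt{\log n}+1)$ integrates to a negligible amount. For the lower half, your Stirling bound on a block of width $\sigma_p/\sqrt{\log n}$ combined with independence is the paper's argument, except that the paper works on a fixed compact window $[\delta,1-\delta]$ and lets $\delta\downarrow0$ only after taking the $\liminf$, which removes the edge-uniformity issue you flag (and discarding the edges costs nothing, since $\E Z(p)-mp\ge0$).
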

\begin{proof}
	Let $q=mp(1-p)$. The centered binomial satisfies the Bernstein moment-generating-function bound~\cite[Chapter 2]{BLM13}
	\[
		\log\E e^{t(Y_i(p)-mp)}\leq\frac{qt^2}{2(1-t/3)},\quad0<t<3.
	\]

	For $0<t<3$, Jensen's inequality gives
	\[
		\E Z(p)-mp\leq\frac{\log n}{t}+\frac{qt}{2(1-t/3)}.
	\]
	When $q>0$, take
	$t=\sqrt{2\log n/q}/\{1+\sqrt{2\log n/q}/3\}$.
	When $q=0$, the maximum is deterministic.
	Thus

	\begin{align}\label{eq:binmaxupper}
		0\leq\E Z(p)-mp\leq\sqrt{2q\log n}+\log n/3.
	\end{align}
	Consequently, with $r=m/\log n\geq1$,
	\[
		J_{n,m}\leq\sqrt{2m\log n}\int_{\log n/m}^1\sqrt{\frac{1-p}{p}}\,dp
		+\frac{\log n}{3}\log r.
	\]
	This proves the bounded-$r$ assertion.
	If $r\to\infty$, then
	$(\log n)\log r=o(\sqrt{m\log n})$ and the integral tends to $\pi/2$, proving the upper half of~\eqref{eq:bulkmean}.

	For the matching lower bound, fix $\delta\in(0,1/2)$ and $\eps\in(0,1)$.
	Uniformly for $p\in[\delta,1-\delta]$, define
	$d:=(1-\eps)\sqrt{2q\log n}$ and $k=\lceil mp+d\rceil$.
	Since $\log n=o(m)$, $d=o(m)$ and the integer block
	$k\leq\ell\leq k+\lfloor\sqrt{q/\log n}\rfloor$ lies in the binomial support for all sufficiently large $n$.
	Stirling's bounds give, uniformly in this block,
	\[
		\pr(Y_i(p)=\ell)\geq\frac{c_\delta}{\sqrt q}
		\exp(-m D(\ell/m\Vert p)-C_\delta),
	\]
	where $D(x\Vert p)=x\log(x/p)+(1-x)\log((1-x)/(1-p))$.
	Taylor expansion on the compact interval gives
	\[
		mD(\ell/m\Vert p)=(1-\eps)^2\log n+o(\log n).
	\]
	This holds uniformly in $p$ and $\ell$.
	The cubic remainder is $O_\delta((\log n)\sqrt{\log n/m})=o(\log n)$.
	Rounding and the block width contribute $O_\delta(1)+o(1)$ to the quadratic term.
	Summing the block, whose length is of order $\sqrt{q/\log n}$, yields
	\[
		\pr(Y_i(p)\geq mp+d)
		\geq\frac{c_\delta}{\sqrt{\log n}}\exp\{-(1-\eps)^2\log n-o(\log n)\}.
	\]
	Its product with $n$ tends to infinity uniformly in $p$.
	Independence therefore implies $\pr(Z(p)\geq mp+d)=1-o(1)$ uniformly.
	Moreover, $\E(mp-Z(p))_+\leq\E|Y_1(p)-mp|\leq\sqrt q$, so
	\[
		\E Z(p)-mp\geq(1-\eps)\sqrt{2q\log n}(1-o(1))-\sqrt q.
	\]
	Integrating over $[\delta,1-\delta]$, which is contained in $[\log n/m,1]$ eventually, gives
	\[
		\liminf\frac{J_{n,m}}{\sqrt{m\log n}}
		\geq(1-\eps)\sqrt2\int_\delta^{1-\delta}\sqrt{\frac{1-p}{p}}\,dp.
	\]
	Letting $\eps,\delta\downarrow0$ proves the lower half.
\end{proof}

\subsection{Proof of Theorem~\ref{thm:allmean}}
	If $m\leq \log n$, Lemmas~\ref{lem:meanintegral} and~\ref{lem:earlymean} give
	\[
		\E T_{n,m}-m=(\log n)\cH_m+O(m)\sim (\log n)\log m,
	\]
	because $m\to\infty$ and $m\leq \log n$. The term $\sqrt{m\log n}$ is negligible compared with $(\log n)(\log m)$.
	If $m\geq \log n$, the exact decomposition is
	\[
		\E T_{n,m}-m=(I_{n,m}-\log n)+J_{n,m}.
	\]
	The early term is $(\log n)\log\log n+O(\log n)$. If $m/\log n\to\infty$, Lemma~\ref{lem:bulkmean} gives the desired additive first-order formula.
	If $m/\log n$ is bounded, both $J_{n,m}$ and $\sqrt{m\log n}$ are $O(\log n)$ and hence negligible compared with $(\log n)\log\log n$.
	Every sequence of ratios has a further subsequence on which $m/\log n$ is bounded or tends to infinity.
	The subsequence criterion therefore covers arbitrary sequences.
	The three regime statements follow by comparing the two terms in~\eqref{eq:allmean}.\qed

\section{Martingale Analysis and Proof of Theorem~\ref{thm:clt}}\label{sec:mart}

\subsection{Setup}\label{ssec:setup}
Let $X_k$ be the type of the $k$-th revealed card.
Let $Y_k$ be the corresponding guess.
Write $\vX=(X_1,X_{2},\ldots,X_N)$ and $\vY=(Y_1,Y_{2},\ldots,Y_N)$.
After $k$ rounds, define
\[
	m_k(i):=\sum_{t=1}^{k}\ind_{\{X_t=i\}},\quad
	\pi_k(i):=\frac{m-m_k(i)}{N-k},\quad 0\leq k<N.
\]
Here $m_0(i)=0$.
Given the first $k$ cards, the next type has probability vector $\pi_k$.
The greedy guess is uniform on $A_k=\argmax_i\pi_k(i)$.
Its auxiliary random choice is independent of the deck.
Thus $X_{k+1}$ and $Y_{k+1}$ are conditionally independent given the past cards and guesses.

Define 
$$
\s_k:=\ind_{\{X_k=Y_k\}}\text{ for }k=1,2,\ldots,N.
$$
The conditional success probability is
\[
	\pi_k^*=\E(\s_{k+1}\mid\vX_{1:k},\vY_{1:k})
	=\max_i\pi_k(i)=\frac{m-\min_i m_k(i)}{N-k}.
\]
The score and compensator are defined as
\begin{align}
	T_{n,m} & =\sum_{k=1}^{N}\s_k,\label{def:Tnm}        \\
	S_{n,m} & =\sum_{k=1}^{N}\pi_{k-1}^*.\label{def:Snm}
\end{align}
Indeed, every type in $A_k$ has the same probability $\max_i\pi_k(i)$.
In particular, $S_{n,m}$ depends only on the deck.
It satisfies $\E S_{n,m}=\E T_{n,m}$.
Conditioning on the entire deck gives a different quantity.
Under uniform tie-breaking,
\[
	\E(T_{n,m}\mid\vX)
	=\sum_{k=1}^{N}{\ind_{\{X_k\in A_{k-1}\}}}/{|A_{k-1}|}.
\]
This quantity need not equal $S_{n,m}$.
The distinction matters because future cards affect the compensator.
The following lemmas are useful.
\begin{lem}\label{lem:easylb}
	For every $n,m\geq1$,
	$\E S_{n,m}\geq\cH_n+m-1$.
\end{lem}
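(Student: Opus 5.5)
We prove $\E S_{n,m}\geq\cH_n+m-1$ through the time-reversed representation from Section~\ref{ssec:heuristic}. We use the identity
\[
	S_{n,m}=\sum_{t=1}^{N}\frac{W_t}{t},
\]
where $W_t$ is the maximal occupancy among the final $t$ cards. To see it, note that $\pi_{k-1}^*=(m-\min_i m_{k-1}(i))/(N-k+1)$. The numerator $m-\min_i m_{k-1}(i)$ is the largest number of cards of a single type among the remaining $t:=N-k+1$ cards, which is exactly $W_t$.

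The plan is to bound $W_t$ from below deterministically and then take expectations. We use two elementary bounds:
\begin{itemize}
	\item By pigeonhole, $W_t\geq\lceil t/n\rceil$.
	\item Trivially, $W_t\geq1$ for $t\geq1$.
\end{itemize}
It is convenient to combine them as $W_t\geq 1+\lfloor (t-1)/n\rfloor$, which is the same as $\lceil t/n\rceil$.

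We split the sum at $t=n$.

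For $1\leq t\leq n$, use $W_t\geq1$. This gives
\[
	\sum_{t=1}^n\frac{W_t}{t}\geq\cH_n.
\]

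For $t>n$, write $t=qn+r$ with $1\leq r\leq n$, so that $\lceil t/n\rceil=q+1$. We group the times $t\in\{qn+1,\dots,(q+1)n\}$ for $q=1,\dots,m-1$. Within block $q$, each term is at least $(q+1)/t\geq (q+1)/((q+1)n)=1/n$. The block has $n$ terms, so it contributes at least $1$. Summing over the $m-1$ blocks gives at least $m-1$.

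Adding the two parts gives $S_{n,m}\geq \cH_n+m-1$ deterministically, and hence also in expectation.

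No step here is a real obstacle. The only point needing care is justifying the reversal identity $\pi_{k-1}^*=W_{N-k+1}/(N-k+1)$, which is immediate once one observes that $\min_i m_{k-1}(i)$ refers to the first $k-1$ cards, while the remaining $t=N-k+1$ cards have type counts $m-m_{k-1}(i)$.
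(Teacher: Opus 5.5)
Your proof is correct and is essentially the paper's argument. Both use the reversal identity \eqref{eq:SW}, the deterministic pigeonhole bound $W_t\geq\lceil t/n\rceil$, and a blockwise estimate: the first block of $n$ times contributes at least $\cH_n$, and each of the remaining $m-1$ blocks contributes at least $1$.
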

\begin{proof}
	Since $W_t\geq\lceil t/n\rceil$ deterministically,~\eqref{eq:SW} gives
    
	\smallskip$\qquad\qquad\displaystyle
		\E S_{n,m}\geq\sum_{t=1}^{nm}\frac{\lceil t/n\rceil}{t}
		=\sum_{i=1}^m i\sum_{t=(i-1)n+1}^{in}\frac1t
		\geq\cH_n+\sum_{i=2}^m i\frac n{in}
		=\cH_n+m-1.
	$
\end{proof}

\begin{lem}\label{lem:pisquared}
	As $n\to\infty$, uniformly in $m\geq1$,
	\[
		\frac1{\E S_{n,m}}\sum_{k=1}^{N}\E(\abs{\pi_{k-1}^*}^2)\longrightarrow0.
	\]
\end{lem}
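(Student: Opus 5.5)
We need to show that $\sum_k \E(\pi_{k-1}^*)^2 = o(\E S_{n,m})$, uniformly in $m\geq 1$.

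We pass to the reversed deck. At step $k$, with $t=N-k+1$ cards remaining, we have $\pi_{k-1}^*=W_t/t$. Hence
\[
\sum_{k=1}^N(\pi_{k-1}^*)^2=\sum_{t=1}^N \frac{W_t^2}{t^2}.
\]
Since $W_t\le \min\{t,m\}$ and $W_t/t\le 1$, we split at a cutoff time $t_0$. For $t\le t_0$ we bound each term by $W_t/t$. For $t>t_0$ we write $W_t^2/t^2\le (W_t/t)\cdot \sup_{s>t_0}W_s/s$. This gives
\[
\sum_t \frac{W_t^2}{t^2}\le \sum_{t\le t_0}\frac{W_t}{t}+S_{n,m}\,\sup_{t>t_0}\frac{W_t}{t}.
\]

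\textbf{Bounding $\sup_{t>t_0}W_t/t$.} Take $t_0$ of order $n$, say $t_0=\lceil \eps n\rceil$ or somewhat larger. For $t\ge t_0$ we have the bound $W_t\le t/n+$ the binomial/hypergeometric maximal deviation, which by a union bound over the $n$ types is typically $O(\sqrt{(t/n)\log n}+\log n/\log\log n)$. So for $t\ge t_0$, $W_t/t$ is small whenever $t_0\gg \log n$. A cruder route: take the expectation of the product using Cauchy--Schwarz, or use the deterministic bound $W_t/t\le \max\{m/t,\,1\}$ together with a high-probability event on which $W_t\le \delta t$ for all $t\ge t_0$. That event fails with probability at most $\sum_{t\ge t_0} n\,\pr(\widetilde X_1(t)\ge \delta t)$, which is tiny by hypergeometric tail bounds when $t_0 = K\log n$ with $K=K(\delta)$ large, since $\E\widetilde X_1(t)=t/n\ll\delta t$. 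On the failure event we use the trivial bound $\sum_t (W_t/t)^2\le S_{n,m}\le N$. Because the failure probability is $n^{-10}$-small, this contributes $o(1)$ provided $N\,n^{-10}$ is small; for huge $m$ we bound $S_{n,m}$ more carefully, or note that $\delta t$ exceeds $m$ only when $t>m/\delta$, where the trivial bound $W_t\le m$ already gives $W_t/t\le\delta$. So the failure event only involves $t\le m/\delta$, with at most $nm/\delta$ union terms, and the tails can be made summable uniformly in $m$.

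\textbf{Bounding the early sum.} The early part satisfies $\sum_{t\le t_0}W_t/t\le t_0 = K\log n$. This is not $o(\E S_{n,m})$ when $m$ is small, since then $\E S\asymp \cH_m\log n$, so it must be sharpened; I expect this step to be the main obstacle. The fix is to bound $\E\sum_{t\le t_0}(W_t/t)^2$ directly. For $t\le \sqrt n$ we typically have $W_t=1$ (birthday), giving $\sum 1/t^2=O(1)$. More generally, $\E W_t^2\le 1+\sum_{j\ge2}(2j-1)\pr(W_t\ge j)$ with $\pr(W_t\ge j)\le n\binom tj (m/N)^j\le n(et/(nj))^j$. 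Hence for $t\le n^{1-\eta}$ the excess over $1$ is negligible, and $\sum_{t\le t_0}\E W_t^2/t^2=O(1)$ once $t_0\le n^{1-\eta}$. Choosing $t_0=K\log n$ (fine for large $n$), the total is
\[
\sum_k\E(\pi^*_{k-1})^2\le O(1)+\delta\,\E S_{n,m}+o(1).
\]
Dividing by $\E S_{n,m}\ge\cH_n\to\infty$ (Lemma~\ref{lem:easylb}) and letting $\delta\downarrow0$ gives the claim uniformly in $m$.
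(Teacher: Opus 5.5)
Your route differs from the paper's. The paper deduces this lemma from Lemma~\ref{lem:Bsquared}, which proves the stronger bound $\|B_N\|_2\le C(1+m/n)$. From the tail bound $\pr(W_t\ge k)\le n(et/(nk))^k$ it gets $\|W_t\|_4\le C$ for $t\le\sqrt n$ and $\|W_t\|_4\le C(t/n+\log n)$ in general. It then sums $\|W_t\|_4^2/t^2$ by Minkowski and divides by $\E S_{n,m}\ge\max(m,\cH_n)$. This yields the explicit rate $C(1/\cH_n+1/n)$. It also controls the second moment of $B_N$, which is reused for $\E|Q-1|^2$ in the Kolmogorov bound.

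You instead use the pathwise domination $B_N\le\sum_{t\le t_0}(W_t/t)^2+S_{n,m}\sup_{t>t_0}W_t/t$, together with a good event $G=\{W_t\le\delta t\text{ for all }t>t_0\}$. This charges the late times to $S_{n,m}$ itself, needs only the union-bound tail, and gives $\limsup\le\delta$. That suffices for the lemma as stated, though it does not supply the later $L^2$ control.

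Two steps need fixing.

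First, the failure-event term cannot be made $o(1)$ uniformly in $m$. The proof of Lemma~\ref{lem:easylb} shows $S_{n,m}\ge m$ pathwise, so $\E[S_{n,m};G^c]\ge m\pr(G^c)$. Neither a sharper bound on $S_{n,m}$ nor restricting to $t\le m/\delta$ removes this factor $m$. What you actually need is $o(\E S_{n,m})$, and that is immediate. Since $S_{n,m}\le N=nm$ and $\E S_{n,m}\ge m$, you get $\E[S_{n,m};G^c]/\E S_{n,m}\le n\pr(G^c)\le n^2\sum_{t>t_0}(e/(\delta n))^{\delta t}$. For $t_0=K\log n$ this tends to zero uniformly in $m$.

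Second, your claim that the excess of $\E W_t^2$ over one is negligible for $t\le n^{1-\eta}$ is false when $\eta<1/2$. By the birthday effect, $W_t\ge2$ with high probability once $t\gg\sqrt n$; only $\E W_t^2\le C_\eta$ holds there. For $t\le K\log n$ the excess is $O((\log n)^2/n)$, so your $O(1)$ bound on the early sum is still right.

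In fact the early sum is no obstacle at all. With a constant cutoff $t_0>3/\delta$, the trivial bound $\sum_{t\le t_0}(W_t/t)^2\le t_0$ is $o(\cH_n)$. The same tail estimate still gives $n\pr(G^c)=O_\delta(n^{-2})$.
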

The proof follows from Lemma~\ref{lem:Bsquared} and the fact that $\E S_{n,m}\geq\max(m,\cH_n)$.
%\begin{proof}
%Lemma~\ref{lem:Bsquared} and $\mu=\E S_{n,m}\geq\max(m,\cH_n)$ give
%\begin{align}\label{eq:Qrate}
%\frac1\mu\sum_{k=1}^N\E(\pi_{k-1}^*)^2
%\leq C\left(\frac1\mu+\frac1n\right)
%\leq C\left(\frac1{\cH_n}+\frac1n\right).
%\end{align}
%The last expression tends to zero uniformly in $m$.
%\end{proof}

\subsection{The martingale central limit theorem}\label{ssec:martclt}
Let $\cF_k=\sigma(X_i,Y_i:1\leq i\leq k)$ and let $\cF_0$ be trivial.
The conditional law is $\s_k\mid\cF_{k-1}\sim\Ber(\pi_{k-1}^*)$.
Define
\[
	M_t:= \sum_{k=1}^{t}\left( \s_k-\pi_{k-1}^*\right),\quad t\in \{0,1,\ldots,N\}.
\]
The process $(M_t,\cF_t)_{0\leq t\leq N}$ is a mean-zero martingale because
$\E(M_t-M_{t-1}\mid\cF_{t-1})=0$.
Its terminal value is
\[
	M_N=T_{n,m}-S_{n,m}.
\]
For the variance of the martingale, we have
\begin{align}
	\var(M_N)
	 & = \sum_{k=1}^{N}\E\bigl((M_k - M_{k-1})^2\bigr)
	=\sum_{k=1}^{N} \E\left( \pi_{k-1}^* (1 - \pi_{k-1}^*)\right)
	= \E S_{n,m} - \sum_{k=1}^{N} \E(\abs{\pi_{k-1}^*}^2).\label{eq:varM}
\end{align}
Moreover, since $T_{n,m}=M_N+S_{n,m}$, the Cauchy--Schwarz inequality gives
\begin{align}
	\bigl|\var(T_{n,m})-\var(M_N)-\var(S_{n,m})\bigr|=2\abs{\cov(M_N,S_{n,m})}\leq 2\sqrt{\var(M_N)\var(S_{n,m})}.\label{eq:varT}
\end{align}
The covariance need not vanish, since $S_{n,m}$ depends on the future deck.
However, the estimate~\eqref{eq:varT} is sufficient for the main theorem.
We first prove the Gaussian limit for $M_N$ under a relative variance condition on $S_{n,m}$.
The condition is then verified in Section~\ref{sec:btp}.

\begin{prop}\label{prop:cltcond}
	Suppose $\var(S_{n,m})/(\E S_{n,m})^2 \to 0$ as $n\to\infty$. Then, the following Gaussian Central Limit Theorem holds,
	$$
		\frac{T_{n,m}-S_{n,m}}{\sqrt{\E S_{n,m}}} \implies \N (0,1) \text{ as } n \to \infty.
	$$
\end{prop}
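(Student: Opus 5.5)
We apply the martingale central limit theorem for triangular arrays (McLeish, or Hall--Heyde Corollary 3.1) to the normalized differences $\xi_k:=(\s_k-\pi_{k-1}^*)/\sqrt{\mu_{n,m}}$, where $\mu_{n,m}=\E S_{n,m}$ and $1\le k\le N$. The array is adapted to $\cF_k$, and each $\xi_k$ is a martingale difference. Two conditions must be checked.

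\emph{Conditional Lindeberg.} Every increment satisfies $|\s_k-\pi_{k-1}^*|\le1$. By Lemma~\ref{lem:easylb}, $\mu_{n,m}\ge\cH_n\to\infty$ uniformly in $m$, so $\max_k|\xi_k|\le\mu_{n,m}^{-1/2}\to0$ deterministically. Hence for every $\eps>0$ the Lindeberg sum $\sum_k\E(\xi_k^2\ind_{\{|\xi_k|>\eps\}}\mid\cF_{k-1})$ vanishes identically for large $n$.

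\emph{Conditional variance convergence.} We must show
\[
V_n:=\frac1{\mu_{n,m}}\sum_{k=1}^N\pi_{k-1}^*(1-\pi_{k-1}^*)
=\frac{S_{n,m}}{\mu_{n,m}}-\frac1{\mu_{n,m}}\sum_{k=1}^N(\pi_{k-1}^*)^2\longrightarrow1
\]
in probability. The first term converges to $1$ in $L^2$ by Chebyshev's inequality, since the hypothesis gives $\E(S_{n,m}/\mu_{n,m}-1)^2=\var(S_{n,m})/\mu_{n,m}^2\to0$. The second term is nonnegative, and by Lemma~\ref{lem:pisquared} its expectation tends to $0$ uniformly in $m$; Markov's inequality then makes it $o_P(1)$. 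Thus $V_n\to1$ in probability.

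With both conditions in hand, the martingale CLT gives $\sum_k\xi_k=M_N/\sqrt{\mu_{n,m}}=(T_{n,m}-S_{n,m})/\sqrt{\mu_{n,m}}\implies\N(0,1)$.

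The only delicate point is the variance condition. The compensator is not concentrated at the scale $\sqrt{\mu_{n,m}}$ a priori, but the hypothesis only needs relative concentration $S_{n,m}/\mu_{n,m}\to1$, which it supplies directly. The squared-probability sum is absorbed by Lemma~\ref{lem:pisquared}, which I take as given. Since $m=m_n$ is an arbitrary sequence, no subsequence argument is needed: all bounds are uniform in $m$. The nesting of the filtrations $\cF_k$ across $n$ is not required for the Lindeberg form of the theorem when the limiting conditional variance is the constant $1$.
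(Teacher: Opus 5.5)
Your proof is correct and matches the paper's argument step for step. You get the conditional Lindeberg condition from the bounded increments and $\mu_{n,m}\to\infty$, and $V_N/\mu_{n,m}\to1$ in probability from the hypothesis together with Lemma~\ref{lem:pisquared}; then you apply the Hall--Heyde martingale CLT with constant limiting variance, where nesting of the $\sigma$-fields is indeed unnecessary. The only cosmetic difference is the normalization: the paper divides by $\sigma_N=\sqrt{\var(M_N)}$ and finishes with Slutsky's lemma, whereas you normalize by $\sqrt{\mu_{n,m}}$ directly.
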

\begin{proof}
	Recall that $\mu_{n,m}:=\E S_{n,m}$ and
	\begin{align*}
		V_N := \sum_{k=1}^{N}\E( (M_{k}-M_{k-1})^2\mid \cF_{k-1}) =\sum_{k=1}^{N} \pi_{k-1}^* (1 - \pi_{k-1}^*) = S_{n,m} - \sum_{k=1}^{N} \abs{\pi_{k-1}^*}^2.
	\end{align*}
	By Lemmas~\ref{lem:easylb} and~\ref{lem:pisquared}, we know that $\mu_{n,m}\to\infty$ and $\E(\abs{V_N-S_{n,m}})/\mu_{n,m}\to 0$ as $n\to\infty$.
	Now,
	\begin{align*}
		\E(\abs{V_N/\mu_{n,m}-1})
		\leq \frac{1}{\mu_{n,m}}\sqrt{\var(S_{n,m})} +\frac{1}{\mu_{n,m}}\E(\abs{V_N-S_{n,m}}).
	\end{align*}
	Thus, under the condition that $\var(S_{n,m})/\mu_{n,m}^2 \to 0$ as $n\to\infty$, we can say that
	\begin{align}\label{prob_conv_1}
		\lim_{n\to\infty}\E(\abs{V_N/\mu_{n,m}-1}) = 0,
	\end{align}
	so that $V_N/\mu_{n,m}\to 1$ in probability.
	Since $|M_k-M_{k-1}|\leq1$ and $\mu_{n,m}\to\infty$, for each $\eps>0$ the normalized conditional Lindeberg sum is exactly zero for all sufficiently large $n$:
	\[
		\frac1{\mu_{n,m}}\sum_{k=1}^N\E\left((M_k-M_{k-1})^2
		\ind_{\{|M_k-M_{k-1}|>\eps\sqrt{\mu_{n,m}}\}}\mid\cF_{k-1}\right)=0.
	\]
	Let $\sigma_N^2=\var(M_N)=\E V_N$.
	Lemma~\ref{lem:pisquared} gives $\sigma_N^2/\mu_{n,m}\to1$.
	Since $V_N/\sigma_N^2\to1$ in probability and the conditional Lindeberg condition holds, the martingale central limit theorem with deterministic limiting variance~\cite{HH80} gives $M_N/\sigma_N\implies\N(0,1)$. Slutsky's lemma completes the proof.
\end{proof}

\subsection{Proof of Theorem~\ref{thm:clt}.\ref{clt:a}--\ref{clt:c} assuming Theorem~\ref{thm:var}}
	Let $\mu_{n,m}=\E T_{n,m}=\E S_{n,m}$. By Lemma~\ref{lem:easylb}, $\mu_{n,m}\geq m-1+\cH_n$, so that $\mu_{n,m}\geq\cH_n\vee m$. By Theorem~\ref{thm:var}, for $n\geq n_0$,
	\[
		\E\biggl(\biggl(\frac{S_{n,m}-\mu_{n,m}}{\sqrt{\mu_{n,m}}}\biggr)^2\biggr)=\frac{\var(S_{n,m})}{\mu_{n,m}}\leq \frac{C\log^2\log n}{\cH_n}+\frac{C\,m}{\mu_{n,m}\log n}\leq \frac{C\log^2\log n}{\cH_n}+\frac{C}{\log n}\to 0,
	\]
	uniformly in $m$. This proves (a), since $\mu_{n,m}\to\infty$.
	In particular $\var(S_{n,m})/\mu_{n,m}^2\to0$, so that Proposition~\ref{prop:cltcond} applies and gives (b). By (a), (b) and Slutsky's lemma,
	\[
		\frac{T_{n,m}-\mu_{n,m}}{\sqrt{\mu_{n,m}}}=\frac{T_{n,m}-S_{n,m}}{\sqrt{\mu_{n,m}}}+\frac{S_{n,m}-\mu_{n,m}}{\sqrt{\mu_{n,m}}}\implies \N(0,1).
	\]
	Equation~\eqref{eq:varM} and Lemma~\ref{lem:pisquared} give $\var(M_N)=\mu_{n,m}(1+o(1))$.
	Also $\var(S_{n,m})=o(\mu_{n,m})$.
	Thus~\eqref{eq:varT} yields
	\[
		\var(T_{n,m})=\mu_{n,m}(1+o(1))+o(\mu_{n,m})+\cO\bigl(\sqrt{\mu_{n,m}\cdot o(\mu_{n,m})}\bigr)=\mu_{n,m}(1+o(1)).
	\]
	The variance equivalence and Slutsky's lemma replace $\sqrt{\mu_{n,m}}$ by $\sqrt{\var(T_{n,m})}$ in the preceding convergence, proving (c). \qed

\begin{cor}[Arbitrary greedy tie-breaking]\label{cor:ties}
	Theorem~\ref{thm:clt}  holds for every nonanticipating strategy that always guesses a type of maximal remaining multiplicity, using any auxiliary randomness independent of the shuffle.
\end{cor}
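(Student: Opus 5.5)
The idea is to check that nothing in the proof of Theorem~\ref{thm:clt} depends on the uniform tie rule, except through quantities that are the same for every greedy rule.

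\textbf{Step 1: the compensator and the martingale.} Let the strategy be any nonanticipating greedy rule, and let $\cF_k$ include its auxiliary randomness. That randomness is independent of the shuffle, so given $\cF_{k}$ the next card $X_{k+1}$ still has law $\pi_k$. The guess $Y_{k+1}$ is $\cF_k$-measurable, or conditionally independent of $X_{k+1}$ once we enlarge $\cF_k$ by the auxiliary variable used at round $k+1$. It always lies in $A_k$. Hence
\[
\E(\s_{k+1}\mid\cF_k)=\pi_k(Y_{k+1})=\max_i\pi_k(i)=\pi_k^*,
\]
and this does not depend on which maximizer is chosen. So the compensator $S_{n,m}=\sum_k\pi^*_{k-1}$ is exactly the same function of the deck as before. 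Its law, mean $\mu_{n,m}$, and variance bound from Theorem~\ref{thm:var} are unchanged. In particular $\E T_{n,m}=\mu_{n,m}$ for every such rule.

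\textbf{Step 2: the CLT parts.} The process $M_t=\sum_{k\le t}(\s_k-\pi^*_{k-1})$ is a martingale with respect to the enlarged filtration. Its increments are bounded by one, and its conditional variance is $V_N=S_{n,m}-\sum_k(\pi^*_{k-1})^2$. This is again a deck function, identical to the one before. Lemma~\ref{lem:pisquared}, \eqref{eq:varM}, \eqref{eq:varT}, Proposition~\ref{prop:cltcond} and the proof of parts \ref{clt:a}--\ref{clt:c} then go through verbatim. The covariance term in \eqref{eq:varT} is only bounded by Cauchy--Schwarz, so its possible dependence on the tie rule does no harm.

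\textbf{Step 3: the Kolmogorov bound.} For part \ref{clt:d}, the argument uses conditional Bernoulli moments and Fan's inequality. It only needs that $\s_k\mid\cF_{k-1}\sim\Ber(\pi^*_{k-1})$ with the same $\pi^*_{k-1}$. The bound is expressed through $\mu_{n,m}$ and $\var(S_{n,m})$, both of which are rule-invariant.

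\textbf{Main obstacle.} The only delicate point is the filtration bookkeeping in Step 1. A rule may use auxiliary randomness that is correlated across rounds, and it may depend on past guesses. We need the conditional law of $X_{k+1}$ given the full past, including all auxiliary variables, to equal $\pi_k$. This holds because the auxiliary randomness is independent of the whole shuffle and the guesses are functions of past cards and that randomness. Exchangeability of the shuffle then gives that $X_{k+1}$ given $(X_{1:k},U)$ is uniform over the remaining cards.
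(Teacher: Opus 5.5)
Your proposal is correct and follows the paper's argument: any maximal guess has conditional success probability $\pi^*_{k-1}$, so the compensator, the predictable quadratic variation and the bounded increments are unchanged. These are all the inputs to the CLT and to Fan's bound. Your extra filtration bookkeeping is a valid expansion of the paper's one-line proof; the original filtration $\sigma(X_i,Y_i:i\le k)$, with trivial $\cF_0$, also works, by the tower property through $\sigma(X_{1:k},U)$.
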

\begin{proof}
	Whatever the selected maximal type, the conditional success probability is $\pi_{k-1}^*$. Thus the compensator, predictable quadratic variation and bounded-increment estimates used in the CLT and the rate bound are unchanged.
\end{proof}

\begin{rem}
	If one uses the universal bound~\eqref{eq:varSuniv} instead of Theorem~\ref{thm:var}, the same argument gives
	\[
		\frac{\var(S_{n,m})}{(\E S_{n,m})^2}\leq \frac{Cm}{(m-1+\cH_n)^2}\leq \frac{C}{\cH_n}\to 0,
	\]
	We used $(m-1+\cH_n)^2\geq m\cH_n$, since $\cH_n\geq1$.
	This proves $S_{n,m}/\mu_{n,m}\to1$ in $L^2$ and Theorem~\ref{thm:clt}\ref{clt:b} for arbitrary $m$.
	If $m=o(\log n)$, it also gives
	$\var(S_{n,m})/\mu_{n,m}\leq Cm/(m-1+\cH_n)\to0$.
	The score CLT follows in that range.
	Similarly,~\eqref{eq:varSlog} gives Theorem~\ref{thm:clt}\ref{clt:c} when $m\leq\gl\log n$.
\end{rem}

\subsection{The Kolmogorov bound}\label{ssec:rates}
For random variables $Z$ and $Z'$, let 
\[
    d_K(Z,Z'):=\sup_x\abs{\pr(Z\leq x)-\pr(Z'\leq x)}.
\]

\begin{lem}[Squared success probabilities]\label{lem:Bsquared}
	Define $B_N:=\sum_{k=1}^N\abs{\pi_{k-1}^*}^2$.
	For all sufficiently large $n$ and every $m\geq1$,
	\begin{align}\label{eq:BL2}
		\|B_N\|_2\leq C\left(1+\frac mn\right).
	\end{align}
	In particular, if $\mu_{n,m}=\E S_{n,m}$, then
	\[
		\frac{\E B_N}{\mu_{n,m}}\leq C\left(\frac{1}{\mu_{n,m}}+\frac1n\right),\quad
		\frac{\E B_N^2}{\mu_{n,m}^2}\leq C\left(\frac1{\mu_{n,m}^2}+\frac1{n^2}\right).
	\]
\end{lem}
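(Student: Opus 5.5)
The plan is to pass to reversed time, where $B_N$ becomes a deterministic weighted sum of powers of the maximal occupancy $W_t$, and then bound each term by occupancy tail estimates. If $t=N-k+1$ cards remain before round $k$, then $\pi_{k-1}^*$ is the maximal remaining multiplicity divided by $t$. In the reversed urn process of Section~\ref{ssec:heuristic} this is exactly $W_t/t$. Hence
\[
B_N=\sum_{t=1}^N\frac{W_t^2}{t^2},\qquad
\|B_N\|_2\le\sum_{t=1}^N\frac{\|W_t^2\|_2}{t^2}=\sum_{t=1}^N\frac{\|W_t\|_4^2}{t^2}
\]
by Minkowski's inequality. It then suffices to show $\|W_t\|_4\le C\bigl(t/n+g(t)\bigr)$, with a correction $g(t)$ that is summable against $t^{-2}$ up to $O(1+m/n)$. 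I would split the range of $t$ into three regimes.

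\emph{Birthday regime, $t\le\sqrt n$.} By the union bound and the factorial-moment bound for the hypergeometric law,
\[
\pr(W_t\ge j)\le n\binom mj\Bigl(\frac tN\Bigr)^j\le\frac{n}{j!}\Bigl(\frac tn\Bigr)^j\le\frac{n^{1-j/2}}{j!}.
\]
So $\E W_t^4\le\sum_j j^4\pr(W_t\ge j)$ is bounded by an absolute constant for large $n$, using $W_t\le t$ for the first few levels. The contribution of this regime is therefore $\sum_t C/t^2=O(1)$.

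\emph{Intermediate regime, $\sqrt n<t\le n\log n$.} The same union bound with $j\asymp\log n$ gives $\pr(W_t\ge C\log n)\le n^{-10}$. With $W_t\le t\le n\log n$ on the exceptional event, this yields $\|W_t\|_4\le C\log n$. The contribution is at most $C(\log n)^2\sum_{t>\sqrt n}t^{-2}\le C(\log n)^2/\sqrt n=o(1)$.

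\emph{Bulk regime, $t>n\log n$.} Write $W_t\le t/n+\max_i(\widetilde X_i(t)-t/n)_+$. Each $\widetilde X_i(t)$ is $\hypgeom(N,m,t)$ with mean $t/n$. By Hoeffding's comparison, the hypergeometric law is dominated by $\Bin(t,1/n)$ in convex order, so the Bernstein bound from the proof of Lemma~\ref{lem:bulkmean} applies. Combining it with a union bound over $n$ types and integrating the tail, I expect
\[
\|W_t-t/n\|_4\le C\Bigl(\sqrt{t\log n/n}+\log n\Bigr).
\]
Then $\|W_t\|_4^2/t^2\le C\bigl(n^{-2}+\log n/(nt)+(\log n)^2/t^2\bigr)$. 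Summing over $n\log n<t\le N$ gives
\[
C\Bigl(\frac Nn\cdot\frac1n+\frac{\log n\,\log(N)}{n}+\frac{\log n}{n}\Bigr)
\le C\Bigl(\frac mn+\frac{(\log n)(\log n+\log m)}{n}+o(1)\Bigr).
\]
Since $(\log n)(\log m)/n\le C(1+m/n)$, the total is $C(1+m/n)$.

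This proves $\|B_N\|_2\le C(1+m/n)$. For the consequences, note $\E B_N\le\|B_N\|_2$, and use $\mu_{n,m}\ge\max(m,\cH_n)$ from Lemma~\ref{lem:easylb}. This gives $\E B_N/\mu_{n,m}\le C(1+m/n)/\mu_{n,m}\le C(1/\mu_{n,m}+1/n)$. Squaring, $\E B_N^2\le C(1+m^2/n^2)$ gives the second inequality in the same way.

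I expect the main obstacle to be the bulk-regime $L^4$ bound on the maximum deviation. The point is that the main term must come out with constant exactly matching $N/n^2=m/n$, with only additive errors. Keeping the $\log n$ term explicit in the Bernstein bound, rather than absorbing it into a multiplicative constant, is what keeps the error summable.
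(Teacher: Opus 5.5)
Your proof is correct, and its skeleton matches the paper's. Both reverse time to get $B_N=\sum_{t}W_t^2/t^2$, use Minkowski's inequality to reduce to $\sum_t\|W_t\|_4^2/t^2$, and split at $t=\sqrt n$. The differences are in the tail inputs, in two places.

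First, you bound $\pr(W_t\geq j)$ directly through the hypergeometric factorial moment $\binom mj (t)_j/(N)_j\leq (t/n)^j/j!$. The paper obtains the same estimate $n(et/(nk))^k$ from the multinomial comparison in Corollary~\ref{cor:capacitycomparisons}, which rests on the capacity coupling. Your route is more self-contained.

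Second, your bulk regime is heavier than needed. The union bound from your intermediate regime works for every $t$ once the level $k$ is at least $\lceil 2et/n+4\log n\rceil$, since then $n(et/(nk))^k\leq n2^{-k}\leq e^{-(\log 2-1/4)k}$. This gives $\|W_t\|_4\leq C(t/n+\log n)$ uniformly in $t$, which is exactly the estimate the paper uses. Summing $C(t/n)^2/t^2$ over $t\leq N$ already yields $Cm/n$. So, contrary to your closing remark, the main term need not carry the exact constant matching $m/n$. What matters is only that no growing factor such as $\log n$ multiplies $t/n$, and the crude union bound already guarantees this. The Hoeffding convex-order comparison and the Bernstein bound buy a sharper cross term $\sqrt{t\log n/n}$, which this lemma does not need.
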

\begin{proof}
	The multinomial comparison in Corollary~\ref{cor:capacitycomparisons} gives
	\[
		\pr(W_t\geq k)\leq \min\left(1,n\left(\frac{et}{nk}\right)^k\right).
	\]
	We first prove two fourth-moment bounds.
	If $t\leq\sqrt n$ and $k\geq4$, then, for $n\geq4e^2$,
	\[
		\pr(W_t\geq k)\leq n(e/\sqrt n)^k
		\leq\frac{e^4}{n}\,2^{-(k-4)}.
	\]
	The identity
	$\E W_t^4=\sum_{k\geq1}(k^4-(k-1)^4)\pr(W_t\geq k)$
	therefore gives $\|W_t\|_4\leq C$ in this range.
	For any $t$, let $a_t=\lceil2et/n+4\log n\rceil$.
	For integers $k\geq a_t$, we have
	\[
		\pr(W_t\geq k)\leq n2^{-k}
		\leq e^{-(\log2-1/4)k}.
	\]
	Summing the same moment identity gives
	$\|W_t\|_4\leq C(t/n+\log n)$.
	Now, we use time reversal and Minkowski's inequality.
	They give
	\begin{align*}
		\|B_N\|_2
	 \leq\sum_{t=1}^{nm}\frac{\|W_t\|_4^2}{t^2}                        
		 & \leq C\sum_{t\leq\sqrt n}\frac1{t^2}
		+C\sum_{\sqrt n<t\leq nm}\left(\frac1{n^2}+\frac{(\log n)^2}{t^2}\right) \\
		 & \leq C\left(1+\frac mn+\frac{(\log n)^2}{\sqrt n}\right)
		\leq C\left(1+\frac mn\right).
	\end{align*}
	Since $\mu_{n,m}\geq m$, division by $\mu_{n,m}$ proves the remaining assertions.
	No independence between different times is used.
\end{proof}

\begin{proof}[Proof of Theorem~\ref{thm:clt}.\ref{clt:d}]
	Write $S=S_{n,m}$, $M=T_{n,m}-S$, $\mu=\E S$ and $v=\var S$.
	Let $D_k=\s_k-\pi_{k-1}^*$ and abbreviate $p_k=\pi_{k-1}^*$.
	The conditional Bernoulli law gives, for every $r\geq2$,
	\begin{align}\label{eq:Bernmoments}
		\E(|D_k|^r\mid\cF_{k-1})
		 & =p_k(1-p_k)((1-p_k)^{r-1}+p_k^{r-1})
		  \leq p_k(1-p_k)
		=\E(D_k^2\mid\cF_{k-1}).\notag
	\end{align}
	In particular, $\xi_k=D_k/\sqrt\mu$ satisfies
	\[
		\E(|\xi_k|^3\mid\cF_{k-1})
		\leq\mu^{-1/2}\E(\xi_k^2\mid\cF_{k-1}),\quad
		\max_k|\xi_k|\leq\mu^{-1/2}.
	\]
	For large $n$, $\mu^{-1/2}\leq1/2$ uniformly in $m$.
	Thus condition (A1) of Fan~\cite{Fan19} holds with $\rho=1$ and $\epsilon=\mu^{-1/2}$.
	The normalized predictable quadratic variation is
	\[
		Q=\sum_k\E(\xi_k^2\mid\cF_{k-1})=\frac{S-B_N}{\mu}.
	\]
	Lemma~\ref{lem:Bsquared} gives
	\[
		\E|Q-1|^2\leq\frac{2v+2\E B_N^2}{\mu^2}
		\leq C\left(\frac{v+1}{\mu^2}+\frac1{n^2}\right).
	\]
	Fan's Theorem 2.2, with moment parameter $p=2$, gives
	\[
		d_K(M/\sqrt\mu,\N(0,1))
		\leq C\left(\epsilon|\log\epsilon|
		+\left(\E|Q-1|^2+\E\max_k|\xi_k|^4\right)^{1/5}\right).
	\]
	Since $\E\max_k|\xi_k|^4\leq\mu^{-2}$, we obtain
	\[
		d_K(M/\sqrt\mu,\N(0,1))
		\leq C\left(\frac{\log(2+\mu)}{\sqrt\mu}
		+\left(\frac{v+1}{\mu^2}+\frac1{n^2}\right)^{1/5}\right).
	\]

	For the score, first use normalization by $\sqrt\mu$.
	Define $Z:=M/\sqrt\mu$ and $U=(S-\mu)/\sqrt\mu$.
	For every $\delta>0$,
	\begin{align*}
		\pr(Z\leq x-\delta)-\pr(|U|>\delta)
		 & \leq\pr(Z+U\leq x)                       
		 \leq\pr(Z\leq x+\delta)+\pr(|U|>\delta).
	\end{align*}
	The bounded normal density and Chebyshev's inequality imply
	\[
		d_K((T_{n,m}-\mu)/\sqrt\mu,\N(0,1))
		\leq d_K(Z,\N(0,1))+C\delta+\frac{v}{\mu\delta^2}.
	\]
	Take $\delta=(v/\mu)^{1/3}$ if $v>0$.
	If $v=0$, then $U=0$ almost surely.
	This proves the asserted bound with normalization by $\sqrt\mu$.

	To replace $\sqrt\mu$ by $\sigma:=\sqrt{\var(T_{n,m})}$, define $\sigma_M^2:=\var M=\mu-\E B_N$.
	The reverse triangle inequality gives $|\sigma-\sigma_M|\leq\sqrt v$.
	Consequently,
	\[
		\left|\frac{\sigma}{\sqrt\mu}-1\right|
		\leq C\left(\sqrt{\frac v\mu}+\frac1\mu+\frac1n\right).
	\]
	Here $\sigma_M^2\geq\mu/2$ for large $n$, by Lemma~\ref{lem:Bsquared}.
	Theorem~\ref{thm:var} also gives $v/\mu\to0$ uniformly.
	Normal scale comparison costs at most the last displayed bound.
	For $0\leq v/\mu\leq1$, its square-root term is at most $(v/\mu)^{1/3}$.
	The other terms are absorbed by $\{(v+1)/\mu^2+n^{-2}\}^{1/5}$.
	Thus the score bound follows without any independence assumption on $M$ and $S$.
	Finally, Theorem~\ref{thm:var} and $\mu\geq\max(m,\cH_n)$ imply
	\[
		\frac{v+1}{\mu^2}
		\leq C\frac{(1+\log\log n)^2}{(\log n)^2},\quad
		\frac v\mu\leq C\frac{(1+\log\log n)^2}{\log n}.
	\]
	For the first inequality, use $m/\mu^2\leq1/\mu\leq1/\log n$.
	Also $\log(2+\mu)/\sqrt\mu\leq C\log\log n/\sqrt{\log n}$ for large $n$.
	Both terms in the martingale estimate are $o((1+\log\log n)^{2/3}/(\log n)^{1/3})$, uniformly in $m$.
	This proves~\eqref{eq:uniformrate}.
\end{proof}

%%%%%%%%%%%%%%%%%%%%%%%%%%%%%%%
\section{Backward Time Process and Proof of Theorem~\ref{thm:var}}\label{sec:btp}
%%%%%%%%%%%%%%%%%%%%%%%%%%%%%%%

%%%%%%%%%%%%%%%%%%%%%%%%%%%%%%%
\subsection{The urn model}\label{ssec:urn}
%%%%%%%%%%%%%%%%%%%%%%%%%%%%%%%
In this section, we read the deck from the bottom.
Let $\widetilde X_i(t)$ be the number of cards of type $i$ among its final $t$ cards.
Equivalently, $\widetilde X_i(t)=m-m_{N-t}(i)$.
The next reversed card enters urn $i$ with probability
\[
	\frac{m-\widetilde X_i(t)}{N-t},\quad 0\leq t<N.
\]
The process starts from the zero vector.
At time $t$, its coordinates sum to $t$ and are at most $m$.
Uniform shuffling gives the multivariate hypergeometric law
\begin{align}\label{eq:mvhyp}
	\pr(\widetilde\vX(t)=\vx)
	=\binom Nt^{-1}\prod_{i=1}^{n}\binom m{x_i},\quad
	\vx\in\{0,\ldots,m\}^n,\quad\sum_i x_i=t.
\end{align}
Each coordinate has mass function $h_t$ from~\eqref{eq:hyppmf}.
Another representation uses independent $Y_i\sim\Bin(m,p)$, for any $0<p<1$.
Conditioning their sum to equal $t$ gives exactly the law in~\eqref{eq:mvhyp}.

Define the maximal occupancy
\[
	W_t:=\max_{i\in[n]}\widetilde X_i(t),\quad 0\leq t\leq N,
\]
so that $\pi^*_{k}=W_{N-k}/(N-k)$ for $0\leq k\leq N-1$, and by~\eqref{def:Snm}
\begin{align}\label{eq:SW}
	S_{n,m}=\sum_{t=1}^N \frac{W_t}{t}.
\end{align}

Let $\tau_j$ represent the first time $t$ for which $W_t = j$, where $j=1,2,\dots,m$. Then, by definition, $\tau_j$'s are stopping times for the natural filtration of the reversed urn process, $\tau_1 \equiv 1$, and $\tau_1\leq \tau_2\leq\cdots\leq \tau_m\leq N$. Since the maximum is at least the average, $W_{nj}\geq j$, and hence
\begin{align}\label{eq:Tjnj}
	j\leq \tau_j\leq nj\text{ for all } 1\leq j\leq m.
\end{align}
Since $W_t=\abs{\{j\in[m]: \tau_j\leq t\}}=\sum_{j=1}^m \ind_{\{\tau_j\leq t\}}$, we obtain from~\eqref{eq:SW} the representation
\begin{align}\label{eq:STj}
	S_{n,m}=\sum_{j=1}^m\sum_{t=\tau_j}^N\frac1t=\sum_{j=1}^{m}  j\,(\cH_{\tau_{j+1}-1}-\cH_{\tau_j-1})= m\cH_N-\sum_{j=1}^m \cH_{\tau_j-1},
\end{align}
where $\tau_{m+1}:=N+1$.
Thus logarithmic hitting-time fluctuations govern the compensator.

% \begin{figure}[ht]
% 	\centering
% 	\setlength{\unitlength}{0.7cm}
% 	\begin{picture}(11,5)(-0.6,-1.0)
% 		\put(0,0){\vector(1,0){9.7}}
% 		\put(0,0){\vector(0,1){3.7}}
% 		\put(9.9,-0.1){$t$}
% 		\put(-0.8,3.6){$W_t$}
% 		\put(-0.4,-0.1){$0$}
% 		\put(-0.4,0.9){$1$}\put(-0.4,1.9){$2$}\put(-0.4,2.9){$3$}
% 		\put(0,0){\line(1,0){1}}
% 		\put(1,0){\line(0,1){1}}
% 		\put(1,1){\line(1,0){2}}
% 		\put(3,1){\line(0,1){1}}
% 		\put(3,2){\line(1,0){3}}
% 		\put(6,2){\line(0,1){1}}
% 		\put(6,3){\line(1,0){3}}
% 		\put(1,1){\circle*{0.12}}\put(2,1){\circle*{0.12}}
% 		\put(3,2){\circle*{0.12}}\put(4,2){\circle*{0.12}}\put(5,2){\circle*{0.12}}
% 		\put(6,3){\circle*{0.12}}\put(7,3){\circle*{0.12}}\put(8,3){\circle*{0.12}}\put(9,3){\circle*{0.12}}
% 		\put(1,-0.55){\makebox(0,0){$\tau_1=1$}}
% 		\put(3,-0.55){\makebox(0,0){$\tau_2=3$}}
% 		\put(6,-0.55){\makebox(0,0){$\tau_3=6$}}
% 		\put(9,-0.55){\makebox(0,0){$N=9$}}
% 	\end{picture}
% 	\caption{A reversed deck with $n=m=3$.
% 		Its types are $A,B,A,C,B,A,C,B,C$ in reversed order.
% 		The maximum increases at $1,3,6$.
% 		Thus $W_t=\sum_{j=1}^{3}\ind_{\{\tau_j\leq t\}}$ at integer times.}
% \label{fig:reversal}
% \end{figure}

\subsection{A coupling ordered by capacity}\label{ssec:capacitycoupling}
For decreasing vectors $x,y\in\mathbb Z_{\geq0}^n$ with the same sum, write
$x\preceq_{\mathrm{maj}}y$ if 
\[
\sum_{i=1}^kx_i\leq\sum_{i=1}^ky_i
\text{ for every }k.
\]

\begin{proof}[Proof of Proposition~\ref{prop:capacitycoupling}]
	Take a single sequence of independent $U_t\sim\mathrm{Unif}(0,1)$, $t\geq1$.
	For every capacity $c$, start all bins at zero.
	At time $t<nc$, sort the bins by decreasing current height, retaining their labels and breaking height ties by increasing label.
	If their sorted heights are $h_1\geq\cdots\geq h_n$, set
	\[
		F_c(h;r)=\frac{rc-\sum_{i=1}^r h_i}{nc-t},\quad 0\leq r\leq n,
	\]
	with $F_c(h;0)=0$. Increment the bin of rank
	\[
		I_c(t+1)=\min\{r\in[n]:U_{t+1}\leq F_c(h;r)\},
	\]
	and sort again.
	Each bin of height $h_i$ is chosen with probability
	$(c-h_i)/(nc-t)$, so retaining labels gives exactly the required urn-process marginal.

	We show that one common-uniform step preserves majorization for arbitrary capacities $c\leq d$.
	Suppose the sorted states $x,y$ at time $t<nc$ satisfy $x\preceq_{\mathrm{maj}}y$.
	Write $X_k=\sum_{i\leq k}x_i$ and $Y_k=\sum_{i\leq k}y_i$.
	After one increment and reordering, every partial sum increases by either zero or one.
	Thus, because the heights are integers, only a tight partial sum $X_k=Y_k$ could violate the order.

	Fix such a $k<n$, and put
	$r_x=\max\{i:x_i=x_k\}$ and $r_y=\max\{i:y_i=y_k\}$.
	The top-$k$ sum increases precisely when the selected rank is at most $r_x$ or $r_y$, respectively.
	We claim that $r_x\leq r_y$ and $X_{r_x}=Y_{r_x}$.
	If $r_x=k$, these assertions follow immediately from tightness and $r_y\geq k$.
	If $r_x>k$, then $x_{k+1}=x_k$.
	Tightness at $k$ and majorization at $k-1$ imply $x_k\geq y_k$.
	Majorization at $k+1$ gives
	$x_k=x_{k+1}\leq y_{k+1}\leq y_k\leq x_k$.
	Hence all these heights are equal, and the same argument propagates through the tie block of $x$ up to $r_x$.
	This proves the claim.
	The case $k=n$ needs no comparison, since both totals increase by one.

	Define $r=r_x$ and $H=X_r=Y_r$. Since $x$ is decreasing, $nH\geq rt$.
	For $a>t/n$, direct differentiation gives
	\[
		\frac{d}{da}\left(\frac{ra-H}{na-t}\right)
		=\frac{nH-rt}{(na-t)^2}\geq0.
	\]
	Consequently,
	\[
		F_c(x;r_x)\leq F_d(y;r_x)\leq F_d(y;r_y).
	\]
	Under the common uniform coupling, whenever the tight top-$k$ sum of $x$ increases, the corresponding sum of $y$ also increases.
	Every partial-sum inequality therefore survives the step.
	Starting from equal zero vectors proves the claimed order at all common times.
	Since the same uniforms serve every capacity, the construction is simultaneous for all integer capacities, and the almost-sure assertion holds for their countably many pairs and times.
	Taking $k=1$ gives the maximum comparison.
	Evaluating it at the smaller-capacity hitting time gives the hitting-time comparison.
	Finally, the backward representation~\eqref{eq:SW}, the maximum comparison on $t\leq nc$, and the positive extra terms on $nc<t\leq nd$ give the compensator comparison.
\end{proof}

\begin{cor}[Conditional binomials and multinomial comparison]\label{cor:capacitycomparisons}
	For fixed $n,t$, the maximum of independent $\Bin(m,1/2)$ variables conditionally on their sum being $t$ is stochastically nondecreasing in every admissible capacity $m$.
	Moreover, if $$(Z_1(t),Z_2(t),\ldots,Z_n(t))\sim\Mult(t;1/n,1/n,\ldots,1/n),
    $$ 
    then
    \begin{align}\label{eq:multcomparison}
		W_t^{(m)}\leq_{\mathrm{st}}\max_i Z_i(t),\quad t\leq nm.
	\end{align}
	In particular, for integers $k\geq1$,
	\begin{align}\label{eq:multmaxbound}
		\pr(W_t^{(m)}\geq k)\leq n\left(\frac{et}{nk}\right)^k.
	\end{align}
\end{cor}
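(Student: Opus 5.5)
The corollary has three parts: monotonicity for conditioned binomials, the multinomial comparison \eqref{eq:multcomparison}, and the tail bound \eqref{eq:multmaxbound}. I will derive each from Proposition~\ref{prop:capacitycoupling}, using the binomial representation of the urn law \eqref{eq:mvhyp}.

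\emph{Monotonicity.} Take independent $Y_i\sim\Bin(m,1/2)$ and condition on $\sum_iY_i=t$. The conditional weights are proportional to $\prod_i\binom m{x_i}2^{-m}$, the factor $2^{-nm}$ being constant. This is exactly the multivariate hypergeometric law \eqref{eq:mvhyp} with capacity $m$, so the conditional maximum has the law of $W_t^{(m)}$. For capacities $c\le d$ with $t\le nc$, Proposition~\ref{prop:capacitycoupling} gives $W_t^{(c)}\le W_t^{(d)}$ almost surely. Hence the conditional maximum is stochastically nondecreasing in every admissible capacity.

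\emph{Multinomial comparison.} I will let $d\to\infty$ with $t$ and $n$ fixed. The law \eqref{eq:mvhyp} with capacity $d$ is proportional to $\prod_i\binom d{x_i}$. Since $\binom d{x_i}\sim d^{x_i}/x_i!$ and $\sum_ix_i=t$, the weights are asymptotically proportional to $\prod_i 1/x_i!$. Equivalently, the sampling without replacement from a deck with $d$ cards per type converges to sampling with replacement. Either way, the law converges to $\Mult(t;1/n,\ldots,1/n)$ on the finite set $\{x\in\mathbb Z_{\ge0}^n:\sum_i x_i=t\}$. Therefore $\pr(W_t^{(d)}\ge k)\to\pr(\max_iZ_i(t)\ge k)$ for each $k$. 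Combining this with $\pr(W_t^{(m)}\ge k)\le\pr(W_t^{(d)}\ge k)$ for all $d\ge m$ gives \eqref{eq:multcomparison}.

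\emph{Tail bound.} I will apply a union bound over the $n$ types, giving $\pr(\max_i Z_i\ge k)\le n\,\pr(Z_1\ge k)$ with $Z_1\sim\Bin(t,1/n)$. The factorial moment bound then gives $\pr(Z_1\ge k)\le\binom tk n^{-k}\le (t/n)^k/k!$. Finally, $k!\ge(k/e)^k$ yields $n(et/(nk))^k$, which is \eqref{eq:multmaxbound}.

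I expect no real obstacle. The only point needing care is the limit step, where one must note that the convergence is on a finite state space so that the tail probabilities pass to the limit. The identification of the conditioned binomials with the urn law at each capacity is exact, as the paper notes after \eqref{eq:mvhyp}.
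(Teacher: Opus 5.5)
Your proposal is correct and follows essentially the same route as the paper. You identify the conditioned binomials with the urn law and invoke the capacity coupling, then let $d\to\infty$ on the finite state space to reach the multinomial, and finish with the factorial-moment bound $\binom tk n^{-k}\le (et/(nk))^k$ and a union bound.
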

\begin{proof}
	Conditioning independent binomials on their sum gives the mass function
	$\prod_i\binom m{x_i}/\binom{nm}t$, identical to the urn occupancy law.
	The first assertion follows from Proposition~\ref{prop:capacitycoupling}.
	For fixed $n,t$, the occupancy mass function at capacity $d$ can be written
	\[
		\frac{t!}{\prod_i x_i!}\frac{\prod_i(d)_{x_i}}{(nd)_t}
		\longrightarrow\frac{t!}{\prod_i x_i!}n^{-t}\text{ as }d\to\infty,
	\]
	where $\sum_i x_i=t$. The finite support and the capacity comparison prove~\eqref{eq:multcomparison}.
	Finally, each $Z_i(t)$ is $\Bin(t,1/n)$, so
	\[
		\pr(Z_i(t)\geq k)\leq\E\binom{Z_i(t)}k
		=\binom tk n^{-k}\leq\left(\frac{et}{nk}\right)^k.
	\]
	The union bound proves~\eqref{eq:multmaxbound}.
	For $k>t$, the probability is zero.
\end{proof}

\subsection{Hypergeometric estimates}\label{ssec:hyp}
For $1\leq j\leq m$ and $0\leq t\leq N$ let
\[
	p_j(t):=\pr\bigl(\widetilde X_1(t)\geq j\bigr)=\sum_{k\geq j}h_t(k),
\]
which is nondecreasing in $t$ (a sample of size $t+1$ contains a sample of size $t$), with $p_j(t)=0$ for $t<j$ and $p_j(N)=1$. We collect the facts about $p_j$ that we need.

\begin{lem}\label{lem:hyp}
	Fix $1\leq j\leq m$.
	\begin{enumeratea}
		\item\label{hyp:a} For all $0\leq t\leq N$,
		\[
			\pr(W_t\geq j)\leq n\,p_j(t)\quad\text{and}\quad \pr(W_t\leq j-1)\leq \bigl(1-p_j(t)\bigr)^n\leq e^{-n p_j(t)}.
		\]
		\item\label{hyp:b} For all $0\leq t\leq N-1$,
		\[
			p_j(t+1)-p_j(t)=\frac{j}{t+1}\,h_{t+1}(j).
		\]
		\item\label{hyp:c} For all $j\leq s\leq nj$,
		\[
			h_s(j)\geq \frac{1}{e^2\sqrt{j}}\; p_j(s).
		\]
	\end{enumeratea}
\end{lem}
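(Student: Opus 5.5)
For \ref{hyp:a}, I would argue directly from the multivariate hypergeometric law~\eqref{eq:mvhyp}. The first inequality is the union bound over the $n$ exchangeable coordinates, each of which has law $h_t$. For the second, I would use that the multivariate hypergeometric vector is negatively associated (Joag-Dev--Proschan). Applied to the nonincreasing events $\{\widetilde X_i(t)\leq j-1\}$, negative association gives $\pr(W_t\leq j-1)\leq\prod_i\pr(\widetilde X_i(t)\leq j-1)=(1-p_j(t))^n$. Then $1-x\leq e^{-x}$ finishes.

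For \ref{hyp:b}, I would couple samples of sizes $t$ and $t+1$. Take a uniform $(t+1)$-subset of the deck and delete one of its elements, chosen uniformly; this produces a uniform $t$-subset. The count of type-$1$ cards can only drop, and it drops by exactly one. So $\{\widetilde X_1(t)\geq j\}\subseteq\{\widetilde X_1(t+1)\geq j\}$, and the difference event is that the larger sample has exactly $j$ type-$1$ cards and the deleted card is one of them. This has probability $h_{t+1}(j)\cdot j/(t+1)$. As a fallback, one can check the identity algebraically from~\eqref{eq:hyppmf}.

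For \ref{hyp:c}, I need $p_j(s)/h_s(j)=\sum_{i\geq0}h_s(j+i)/h_s(j)\leq e^2\sqrt j$. The plan is to bound the successive ratios
\[
	r_k:=\frac{h_s(k+1)}{h_s(k)}=\frac{(m-k)(s-k)}{(k+1)(N-m-s+k+1)},
\]
which are decreasing in $k$, so the law is log-concave. I would use $s\leq nj=Nj/m$ to show that for $k\geq j$ one has $r_k\leq j/(k+1)$. The key inequality is $(m-k)(s-k)\leq j\,(N-m-s+k+1)$, which follows by rearranging with $N=nm$ and $s\leq nj$; I would verify it by treating it as linear in $s$ and checking the endpoint $s=nj$. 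Then
\[
	\frac{h_s(j+i)}{h_s(j)}\leq\prod_{l=1}^{i}\frac{j}{j+l}\leq\exp\Bigl(-\sum_{l=1}^{i}\frac{l}{j+l}\Bigr).
\]
For $i\leq j$ this is at most $e^{-i^2/(4j)}$. Summing these terms gives $O(\sqrt j)$, and the terms with $i>j$ decay geometrically with ratio at most $1/2$, contributing at most a bounded multiple of the $i=j$ term. Tracking constants should yield $1+\int_0^\infty e^{-x^2/(4j)}dx+O(1)\leq e^2\sqrt j$. If it is tight for small $j$, I would check $j=1,2$ directly.

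The main obstacle is the ratio comparison $r_k\leq j/(k+1)$ uniformly over all capacities $m\geq j$ and all $s\in[j,nj]$, including near-saturation cases where $N-m-s$ is small. If the clean bound fails there, I would instead compare with the $m=\infty$ binomial/multinomial limit via Corollary~\ref{cor:capacitycomparisons}-type monotonicity. Alternatively, I would use the weaker bound $r_k\leq j/k$, which still yields Gaussian decay $e^{-i^2/(Cj)}$ at the price of adjusting the constant.
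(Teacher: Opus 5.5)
Your proposal is correct. Parts (a) and (b) are the paper's arguments. Part (a) uses the union bound plus negative association. In (b), your deletion coupling is exactly the paper's exchangeability step, since the final $t$ cards are the final $t+1$ minus a uniformly placed one.

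In (c) you follow the same ratio-and-sum strategy but bound the ratios differently. The paper first shows $r_j(s)\le 1$ for $s\le nj$ and $r_{k+1}(s)/r_k(s)\le\frac{k+1}{k+2}$. Hence $r_l(s)\le r_j(s)\frac{j+1}{l+1}\le\frac{j+1}{l+1}$, and it then sums with the exponential series and Stirling. Your direct per-$k$ bound is slightly sharper, and the step you flag as the main obstacle goes through cleanly. For $j\le k\le m$,
\[
	j\,(N-m-nj+k+1)-(m-k)(nj-k)=(k-j)(m+nj-k)+j\geq j>0 .
\]
Since $(m-k)(s-k)$ is nondecreasing and $j(N-m-s+k+1)$ is nonincreasing in $s$, you get $r_k(s)\le j/(k+1)$ for all $s\le nj$. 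The denominator satisfies $N-m-s+k+1\ge(n-1)(m-j)+1\ge 1$. So there is no saturation issue, and none of your fallbacks is needed.

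Your Gaussian summation then gives $p_j(s)/h_s(j)\le 1+\sqrt{\pi j}+e^{-j/4}\le(2+\sqrt\pi)\sqrt j<e^2\sqrt j$ for every $j\ge 1$, so no small-$j$ check is required. Summing instead via the exponential series, $\sum_{i\ge0}j^i j!/(j+i)!\le j!\,e^j/j^j\le e\sqrt j$, is even shorter.

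What the paper's two-step route buys is that its proof of (c) also establishes the monotonicity $r_l(s)\le r_j(s)$ for $l\ge j$, with the $s$-dependent factor $r_j(s)$ kept explicit. This is reused in Lemmas~\ref{lem:elastj}, \ref{lem:tjlocG} and~\ref{lem:elastG} to get $\phi_j(s)\ge 1-r_j(s)$ and the sharper elasticities. Your bound uses only the endpoint $s=nj$ and on its own yields just the universal $\sqrt j$ elasticity. Your log-concavity remark would need its own proof to serve those later steps.
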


\begin{proof}
	\ref{hyp:a} The first inequality is the union bound.
	For the second, use negative association of the multivariate hypergeometric vector.
	See Joag-Dev and Proschan~\cite[Section 3.1]{JDP83}.
	Negative association applies also to decreasing functions on disjoint coordinate sets.
	Iterating it for the indicators $\ind{\{\widetilde X_i(t)\leq x_i\}}$ gives

	\[
		\pr\bigl(\widetilde X_i(t)\leq x_i\ \forall i\in[n]\bigr)\leq \prod_{i=1}^n \pr\bigl(\widetilde X_i(t)\leq x_i\bigr).
	\] Taking $x_i=j-1$ for all $i$ gives $\pr(W_t\leq j-1)\leq (1-p_j(t))^n$, and $1-x\leq e^{-x}$ finishes the proof.

	\ref{hyp:b} Let $\xi_{t+1}$ be the indicator that the $(t+1)$-th ball goes into urn $1$. Then
	\[
		\{\widetilde X_1(t+1)\geq j\}\setminus\{\widetilde X_1(t)\geq j\}=\{\widetilde X_1(t+1)=j,\ \xi_{t+1}=1\}.
	\]
	If $h_{t+1}(j)=0$, both sides vanish. Otherwise, condition on $\widetilde X_1(t+1)=j$.
	The $j$ type-1 positions are exchangeable among these $t+1$ cards.
	Thus the final ball belongs to urn 1 with probability $j/(t+1)$. Hence $p_j(t+1)-p_j(t)=\pr(\widetilde X_1(t+1)=j)\cdot j/(t+1)$.

	\ref{hyp:c} Fix $s$ with $j\leq s\leq nj$. For $k\geq j$ with $h_s(k)>0$, that is, $j\leq k\leq \min(m,s)$, we have from~\eqref{eq:hyppmf}
	\begin{align}\label{eq:ratio}
		r_k(s):=\frac{h_s(k+1)}{h_s(k)}=\frac{(m-k)(s-k)}{(k+1)(N-m-s+k+1)},
	\end{align}
	where the denominator is positive because $s\leq nj$ and $k\geq j$ give $N-m-s+k+1\geq (n-1)(m-j)+1\geq 1$. (When $k=\min(m,s)$ the formula correctly gives $r_k(s)=0$.) We make two observations.
	First, $r_j(s)$ is nondecreasing in $s$, and at $s=nj$
	\[
		r_j(nj)=\frac{(m-j)\,j(n-1)}{(j+1)\bigl((n-1)(m-j)+1\bigr)}<1,
	\]
	so that $r_j(s)\leq 1$ for all $s\leq nj$. Second, for $j\leq k<\min(m,s)$, comparing the four factors in~\eqref{eq:ratio} at $k$ and $k+1$,
	\[
		\frac{r_{k+1}(s)}{r_k(s)}=\frac{m-k-1}{m-k}\cdot\frac{s-k-1}{s-k}\cdot\frac{k+1}{k+2}\cdot\frac{N-m-s+k+1}{N-m-s+k+2}\leq \frac{k+1}{k+2}.
	\]
	Iterating, $r_l(s)\leq r_j(s)\,\frac{j+1}{l+1}\leq \frac{j+1}{l+1}$ for all $j\le l\le \min(m,s)$ (the bound is trivial once $r_l(s)=0$). Therefore, for every $0\le i\le \min(m,s)-j$,
	\[
		h_s(j+i)=h_s(j)\prod_{l=j}^{j+i-1} r_l(s)\leq h_s(j)\prod_{l=j}^{j+i-1}\frac{j+1}{l+1}=h_s(j)\,\frac{(j+1)^i\, j!}{(j+i)!}.
	\]
    For $i>\min(m,s)-j$, $h_s(j+i)=0$, so the same upper bound holds. Summing over $i\geq 0$ and using $j!\leq e\sqrt{j}\,(j/e)^j$,
	\begin{align*}
		p_j(s) & \leq h_s(j)\, j!\,(j+1)^{-j}\sum_{i\geq 0}\frac{(j+1)^{j+i}}{(j+i)!} \\
		       & \leq h_s(j)\, j!\,(j+1)^{-j}e^{j+1}
		\leq h_s(j)\, e^2\sqrt{j}\,\bigl(\frac{j}{j+1}\bigr)^{j}\leq e^2\sqrt{j}\,h_s(j).
	\end{align*}
	This completes the proof.
\end{proof}

The key consequence of Lemma~\ref{lem:hyp}\ref{hyp:b},~\ref{hyp:c} is a lower bound on the \emph{elasticity} of $t\mapsto p_j(t)$. For $j-1\leq t\leq nj-1$, since $p_j(t+1)>0$,
\[
	\frac{p_j(t)}{p_j(t+1)}=1-\frac{j}{t+1}\cdot\frac{h_{t+1}(j)}{p_j(t+1)}\leq 1-\frac{\sqrt{j}}{e^2(t+1)}\leq \exp\bigl(-\frac{\alpha_j}{t+1}\bigr),\quad \alpha_j:=\frac{\sqrt{j}}{e^2}.
\]
Iterating, and using $\sum_{s=t+1}^{t'}1/s\geq \log\frac{t'+1}{t+1}$, we obtain for all $j\leq t<t'\leq nj$ (the case $t<j$ is immediate because $p_j(t)=0$)
\begin{align}\label{eq:elast}
	p_j(t)\leq p_j(t')\,\biggl(\frac{t+1}{t'+1}\biggr)^{\alpha_j}.
\end{align}

\subsection{Concentration of the hitting times: the universal bound}\label{ssec:univ}

\begin{lem}[Concentration from discrete elasticity]\label{lem:tailwindow}
	Let $U$ take values in $\{1,2,\ldots,R\}$, and let $g$ be nondecreasing on $\{0,1,\ldots,R\}$ with
	\[
		\pr(U\leq t)\leq g(t),\quad \pr(U>t)\leq e^{-g(t)}.
	\]
	Suppose $h:=\min\{t:g(t)\geq1\}$ exists, and for integers $0\leq t\leq t'\leq E$, with $h\leq E\leq R$,
	\[
		g(t)\leq g(t')\left(\frac{t+1}{t'+1}\right)^\beta,
		\quad 0<\beta\leq2h.
	\]
	If $E=R$, then $\E\log^2(U/h)\leq C/\beta^2$.
	If $E<R$, assume additionally that, for $0\leq t\leq t'\leq R$,
	\[
		g(t)\leq g(t')\left(\frac{t+1}{t'+1}\right)^\alpha,
		\quad 0<\alpha\leq E,
	\]
	and define $x_*:=\log(E/h)$. If $\beta x_*\geq6$ and $K:=e^{\beta x_*/2}$, then
	\begin{align}\label{eq:windowbound}
		\E\log^2(U/h)\leq
		C\left(\beta^{-2}+\alpha^{-2}e^{-K}
		 +(\alpha\beta)^{-1}e^{-K/2}\right).
	\end{align}
\end{lem}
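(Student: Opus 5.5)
We split $\E\log^2(U/h)=\E[\log^2(U/h);U<h]+\E[\log^2(U/h);U\geq h]$ and use the layer-cake formula on each side, so that only the two tail hypotheses enter. Two boundary facts are used throughout. Since $h$ is the first integer time with $g\geq1$, we have $g(h-1)<1$, and hence $g(h)\leq g(h-1)\cdot((h+1)/h)^{\beta}$ up to the jump at $h$. More useful is that $g(h)\geq1$ at the right end. The assumption $\beta\leq 2h$ guarantees that the discrete factors $(t+1)/(t'+1)$ at scales $t,t'\geq h$ are comparable to $t/t'$ up to a factor $e^{O(\beta/h)}=O(1)$.

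\textbf{Lower tail.} For $x>0$, the event $\{\log(h/U)>x\}$ is $\{U\leq t\}$ with $t=\lfloor he^{-x}\rfloor<h$. We apply the elasticity bound with $t'=h\leq E$ to get
\[
\pr(U\leq t)\leq g(t)\leq g(h)\bigl(\tfrac{t+1}{h+1}\bigr)^\beta .
\]
We cannot directly bound $g(h)$ by $1$. Instead we compare with $t'=h-1$, where $g(h-1)<1$; this gives $\pr(\log(h/U)>x)\leq (\tfrac{t+1}{h})^{\beta}\lesssim e^{-\beta x}e^{\beta/h}\lesssim e^{-\beta x}$ once $he^{-x}\geq1$. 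When $he^{-x}<1$ the event is empty because $U\geq1$. Integrating, $\int_0^\infty 2x\,e^{-\beta x}\,dx=2/\beta^2$.

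\textbf{Upper tail.} For $x>0$, let $t=\lceil he^{x}\rceil\geq h$. Then $\pr(U>t)\leq e^{-g(t)}$. The elasticity inequality reversed, with $t=h$ and $t'=t$, gives $g(t)\geq g(h)(\tfrac{t+1}{h+1})^{\beta}\geq c\,e^{\beta x}$. Hence $\pr(\log(U/h)>x)\leq \exp(-ce^{\beta x})$ as long as $t\leq E$, i.e.\ $x\leq x_*$.

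If $E=R$ this covers everything. The resulting contribution $\int 2x\exp(-ce^{\beta x})\,dx$ is $O(\beta^{-2})$, since the substitution $y=\beta x$ gives $\beta^{-2}\int 2y\exp(-ce^{y})\,dy$.

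If $E<R$, then for $x>x_*$ we use the weaker exponent $\alpha$ anchored at $E$:
\[
g(t)\geq g(E)\bigl(\tfrac{t+1}{E+1}\bigr)^{\alpha}\geq c\,K^{2}e^{\alpha(x-x_*)},
\]
where $g(E)\geq c\,e^{\beta x_*}=cK^2$ by the previous step. The inequality $\alpha\leq E$ again keeps the discrete rounding harmless. Note $K^2\geq K$, and $K\geq e^3$ because $\beta x_*\geq6$.

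So the tail beyond $x_*$ contributes
\[
\int_{x_*}^\infty 2x\exp\bigl(-K e^{\alpha(x-x_*)}\bigr)\,dx
\leq 2x_*\int_0^\infty e^{-Ke^{\alpha y}}\,dy+2\int_0^\infty y\,e^{-Ke^{\alpha y}}\,dy .
\]
Bounding $e^{-Ke^{\alpha y}}\leq e^{-K}e^{-K\alpha y}$ gives the estimates $x_*e^{-K}/(\alpha K)$ and $e^{-K}/(\alpha K)^2\leq \alpha^{-2}e^{-K}$. For the cross term we use $x_*=2\log K/\beta$, so
\[
x_*e^{-K}/(\alpha K)\lesssim (\alpha\beta)^{-1}\log K\,e^{-K}/K\lesssim(\alpha\beta)^{-1}e^{-K/2}.
\]
This yields \eqref{eq:windowbound}.

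The main obstacle is the bookkeeping at the discrete boundaries. Two points need care: converting $(t+1)/(t'+1)$ into $e^{\pm x}$ without losing the constant $c$ (this is where $\beta\leq2h$ and $\alpha\leq E$ are used), and getting a lower bound $g(h)\geq1$ but an upper bound $<1$ at $h-1$ for the lower tail. I would handle these by anchoring the lower tail at $h-1$ and the upper tail at $h$, and absorbing $(1+1/h)^{\beta}\leq e^{2}$.
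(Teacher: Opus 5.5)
Your architecture matches the paper's proof. Both use the layer-cake formula on the two tails and anchor the lower tail at $h-1$, where $g(h-1)<1$. Both anchor the upper tail at $h$ with exponent $\beta$ up to $E$, then re-anchor at $E$ with exponent $\alpha$, and use $\beta x_*\geq6$ to pass from $e^{-3}K^2$ to $K$. The final bound $e^{-K}\{2(\alpha K)^{-2}+2x_*(\alpha K)^{-1}\}$ with $x_*=2\log K/\beta$ is also the same.

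There is a small slip in the upper tail. Since $\{\log(U/h)>x\}=\{U>\lfloor he^x\rfloor\}$, which is larger than $\{U>\lceil he^x\rceil\}$, you must take $t=\lfloor he^x\rfloor$ (still $\geq h$ for $x>0$). Then $(t+1)/(h+1)\geq e^x h/(h+1)$ and $(1+1/h)^{-\beta}\geq e^{-2}$ give $g(t)\geq e^{-2}e^{\beta x}$, as you intended.

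The genuine gap is in the lower tail. With $t=\lfloor he^{-x}\rfloor\geq1$ you correctly get $\pr(\log(h/U)>x)\leq((t+1)/h)^\beta$, but
\[
\Bigl(\frac{t+1}{h}\Bigr)^{\beta}\leq\Bigl(e^{-x}+\frac1h\Bigr)^{\beta}=e^{-\beta x}\Bigl(1+\frac{e^{x}}{h}\Bigr)^{\beta}.
\]
The correction factor is $(1+e^x/h)^\beta$, not $(1+1/h)^\beta$. The added $1$ must be compared with $t\approx he^{-x}$, not with $h$, so your comparability of $(t+1)/(t'+1)$ with $t/t'$ holds only at scales $\geq h$. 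When $he^{-x}$ is close to $1$, the factor is close to $2^\beta$. Since $\beta$ may be as large as $2h$, this is not an absolute constant.

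The pointwise bound $\pr(\log(h/U)>x)\lesssim e^{-\beta x}$ is in fact not implied by the hypotheses. For $he^{-x}\downarrow1$ the event is $\{U=1\}$, and the hypotheses give only $\pr(U=1)\leq g(1)\leq(2/h)^\beta$. This can be attained, for example with $g(0)=0$, $g(t)=(1-\eps)((t+1)/h)^\beta$ for $t\geq1$, and $\pr(U\leq t)=g(t)$ for $1\leq t<h$. Meanwhile $e^{-\beta x}\to h^{-\beta}$.

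The repair is the paper's split of the range of $x$:
\begin{enumerate}
\item For $0<x\leq2\log2$, we have $(1+e^x/h)^\beta\leq e^{\beta e^x/h}\leq e^8$, since $\beta\leq2h$.
\item For $x\geq2\log2$ and $t\geq1$ (so $e^x\leq h$), we have $(1+e^x/h)^\beta\leq2^\beta\leq e^{\beta x/2}$, so the tail is at most $e^{-\beta x/2}$.
\end{enumerate}
Both pieces integrate against $2x\,dx$ to $O(\beta^{-2})$, so the lemma survives. However, this is precisely the discrete-boundary bookkeeping you flagged as the main obstacle, and as written your lower-tail step is false.
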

\begin{proof}
	For the lower tail, take $t=\lfloor he^{-x}\rfloor$. If $t=0$, then $U\geq1>he^{-x}$ and the probability is zero.
	Otherwise $t\leq h-1$ and $g(h-1)<1$, so
	\[
		\pr(\log(U/h)<-x)\leq e^{-\beta x}(1+e^x/h)^\beta.
	\]
	For $0<x\leq2\log2$ the final factor is at most $e^8$, because $\beta/h\leq2$. For $x\geq2\log2$, whenever $t\geq1$ it is at most $2^\beta\leq e^{\beta x/2}$. Integrating $2x$ times these bounds gives $C/\beta^2$.
	For the upper tail, $t=\lfloor he^x\rfloor\leq E$ implies
	\[
		g(t)\geq g(h)\left(\frac{t+1}{h+1}\right)^\beta
		\geq e^{-2}e^{\beta x},
	\]
	where the equality-endpoint case is included.
	Thus the integral of $2x\exp(-e^{-2}e^{\beta x})$ is $C/\beta^2$. If $E=R$, the remaining upper tail is zero, proving the first claim.
	Otherwise $g(E)\geq e^{-2}e^{\beta x_*}$. For $x>x_*$ and $t<R$, the weaker elasticity and $\alpha/E\leq1$ give
	\[
		g(t)\geq g(E)e^{-1}e^{\alpha(x-x_*)}
		\geq K e^{\alpha(x-x_*)}.
	\]
	For $t\geq R$, the upper-tail probability is zero.
	Writing $y=x-x_*$ and using $e^{\alpha y}\geq1+\alpha y$, the remaining integral is at most
	\[
		e^{-K}\left(\frac{2}{K^2\alpha^2}+\frac{2x_*}{K\alpha}\right).
	\]
	With $u=\beta x_*/2$ we have
	$x_*e^{-K/2}=(2/\beta)u e^{-e^u/2}\leq C/\beta$.
	This proves~\eqref{eq:windowbound}.
\end{proof}

\begin{prop}\label{prop:logTj}
	There exists a universal constant $C_0<\infty$ such that for all $n\geq2$, $m\geq1$ and $1\leq j\leq m$ there is a deterministic $t_j=t_j(n,m)\in[j,nj]$ with
	\[
		\E\bigl(\log^2(\tau_j/t_j)\bigr)\leq \frac{C_0}{j}.
	\]
\end{prop}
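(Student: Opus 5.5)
The plan is to apply Lemma~\ref{lem:tailwindow} with $E=R$ to $U=\tau_j$, taking $g(t):=n\,p_j(t)$ and $R=E=nj$. By \eqref{eq:Tjnj}, $\tau_j$ takes values in $\{j,\ldots,nj\}\subseteq\{1,\ldots,R\}$. The deterministic scale will be $t_j:=h=\min\{t:np_j(t)\geq1\}$.

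\textbf{Tail hypotheses.} Since $\{\tau_j\leq t\}=\{W_t\geq j\}$ and $\{\tau_j>t\}=\{W_t\leq j-1\}$, Lemma~\ref{lem:hyp}\ref{hyp:a} gives
\[
\pr(\tau_j\leq t)\leq g(t),\qquad \pr(\tau_j>t)\leq e^{-g(t)}.
\]
The function $g$ is nondecreasing because $p_j$ is. We must also check that $h$ exists and lies in $[j,nj]$. Since $p_j(t)=0$ for $t<j$, we have $h\geq j$. For the upper end, at $t=nj$ we get $\pr(\tau_j>nj)=0$, and \eqref{eq:Tjnj} makes this automatic. The exponential bound alone does not force $g(nj)\geq1$, so a direct argument is needed: exchangeability gives $\E\widetilde X_1(nj)=j$, and the hypergeometric law puts mass $\geq 1/n$ at $\{\widetilde X_1(nj)\geq j\}$. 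If that second step is awkward, use the union bound instead: $1=\pr(W_{nj}\geq j)\leq np_j(nj)$. This shows $g(nj)\geq1$, so $h\leq nj$ and $t_j\in[j,nj]$ as required.

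\textbf{Elasticity.} The discrete elasticity bound \eqref{eq:elast} gives, for $j\leq t<t'\leq nj$,
\[
g(t)\leq g(t')\Bigl(\frac{t+1}{t'+1}\Bigr)^{\alpha_j},\qquad \alpha_j=\frac{\sqrt j}{e^2}.
\]
For $t<j$ we have $g(t)=0$, and $t=t'$ is trivial, so the inequality holds for all $0\leq t\leq t'\leq E=nj$. Take $\beta=\alpha_j$. The lemma's constraint $\beta\leq 2h$ holds because $\sqrt j/e^2\leq j\leq h$. The case $E=R$ of Lemma~\ref{lem:tailwindow} then yields
\[
\E\log^2(\tau_j/t_j)\leq C\beta^{-2}=Ce^4/j,
\]
which is the claimed bound.

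\textbf{Main obstacle.} The real work is already in Lemma~\ref{lem:hyp}\ref{hyp:c}, so the delicate points here are bookkeeping. First, the elasticity must be valid on the whole range $[0,R]$ with $R=nj$, so that no second regime with a weaker exponent $\alpha$ is needed. Second, the endpoint condition $h\leq nj$ must hold. Both follow from $\tau_j\leq nj$ deterministically and from \eqref{eq:elast} extending up to $t'=nj$. For $j=1$ the statement is trivial, since $\tau_1=1$ and we may take $t_1=1$.
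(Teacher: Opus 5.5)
Your proposal is correct and is essentially the paper's proof. Both apply the full-window case $E=R=nj$ of Lemma~\ref{lem:tailwindow} with $g=np_j$, $h=t_j$ and $\beta=\sqrt j/e^2$, using tails from Lemma~\ref{lem:hyp}\ref{hyp:a}, elasticity from \eqref{eq:elast}, and $t_j\le nj$ from the union bound $1=\pr(W_{nj}\ge j)\le np_j(nj)$. Your aside that the hypergeometric law puts mass at least $1/n$ on $\{\widetilde X_1(nj)\ge j\}$ is superfluous, since your union-bound fallback already gives exactly that.
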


\begin{proof}
	Define $g(t)=np_j(t)$ and define $t_j=\min\{t:g(t)\geq1\}$. Since $g(t)=0$ for $t<j$ and $g(nj)\geq\pr(W_{nj}\geq j)=1$, we have $j\leq t_j\leq nj$. Lemma~\ref{lem:hyp}\ref{hyp:a} gives
	\begin{align}\label{eq:tails}
		\pr(\tau_j\leq t)\leq g(t),\quad \pr(\tau_j>t)\leq e^{-g(t)}.
	\end{align}
	Apply the full-window case of Lemma~\ref{lem:tailwindow} with $U=\tau_j$, $h=t_j$, $E=R=nj$ and $\beta=\sqrt j/e^2$. The required elasticity is~\eqref{eq:elast}.
	The equal-endpoint case is an equality.
	Since $\beta\leq t_j$, the result follows with an absolute constant $C_0$.
\end{proof}

\begin{proof}[Proof of the universal bound~\eqref{eq:varSuniv}]
	We first note the elementary inequality
	\begin{align}\label{eq:Hlog}
		\abs{\cH_{a-1}-\cH_{b-1}}\leq 2\abs{\log(a/b)}\quad\text{for all integers } a,b\geq 1.
	\end{align}
	Indeed, for $a>b$,
	\[
		\cH_{a-1}-\cH_{b-1}=\sum_{k=b+1}^{a}\frac1k+\frac1b-\frac1a\leq \int_b^a\frac{dx}{x}+\frac{a-b}{ab}\leq \log\frac ab+\biggl(1-\frac ba\biggr)\leq 2\log\frac ab,
	\]
	using $1-y\leq -\log y$ for $y\in(0,1]$.
	Let $t_j$ be as in Proposition~\ref{prop:logTj}.
	By~\eqref{eq:Hlog} and Proposition~\ref{prop:logTj},
	\[
		\var(\cH_{\tau_j-1})\leq \E\bigl((\cH_{\tau_j-1}-\cH_{t_j-1})^2\bigr)\leq 4\,\E\bigl(\log^2(\tau_j/t_j)\bigr)\leq \frac{4C_0}{j}.
	\]
	By the representation~\eqref{eq:STj} and Minkowski's inequality,
	\[
		\norm{S_{n,m}-\E S_{n,m}}_2=\bigl\lVert\sum_{j=1}^m\bigl(\cH_{\tau_j-1}-\E(\cH_{\tau_j-1})\bigr)\bigr\rVert_2\leq \sum_{j=1}^m \sqrt{\frac{4C_0}{j}}\leq 4\sqrt{C_0\, m}.
	\]
	Hence $\var(S_{n,m})\leq 16 C_0\, m$, which is~\eqref{eq:varSuniv} with $C=16C_0$.
\end{proof}

\subsection{The large-deviation regime $j\leq \gl\log n$}\label{ssec:logregime}
Throughout this subsection $\gl>0$ is fixed.
The constants $\rho_0,\rho_1,\rho_2\in(0,1)$, $j_0$, $n_0$ and $C$ below depend only on $\gl$. The estimates are stated for indices $j\leq \gl\log n$ and hold for every $m\geq j$.
This uniformity in $m$ is used in Section~\ref{ssec:gaussregime}.

The key observation is that at $t_j$ the mean occupancy is at most a fixed fraction of $j$, uniformly in every capacity $m\geq j$. The probability ratios above $j$ are then bounded away from one, and the hazard $\phi_j$ is bounded below by a positive constant.
Thus the elasticity has order $j$ rather than $\sqrt j$. The location bound follows by comparison with capacity $j$, where a level-$j$ tail is simply the probability that all cards of one type have appeared.
The elasticity bound then follows from hypergeometric probability ratios.

\begin{lem}[Location of $t_j$]\label{lem:tjloc}
	Let $\rho_0:=e^{-1/(2\gl)}$. There exists $n_0=n_0(\gl)$ such that for all $n\geq n_0$, all $m\geq1$ and all $1\leq j\leq \min(m,\gl\log n)$, the hitting scale $t_j$ defined in Proposition~\ref{prop:logTj} satisfies
	\[
		t_j\leq \rho_0\, n j.
	\]
\end{lem}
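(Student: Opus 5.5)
Since $t_j=\min\{t:np_j(t)\geq1\}$ and $p_j$ is nondecreasing, it suffices to show $n\,p_j(\lfloor\rho_0 nj\rfloor)\geq1$. The plan is to bound $p_j$ from below at $s:=\lfloor\rho_0 nj\rfloor$ by comparing with capacity $j$. The level-$j$ tail $\pr(\widetilde X_1(s)\geq j)$ is a single-coordinate event. For a single coordinate, the capacity comparison can be done directly with hypergeometric formulas rather than via Proposition~\ref{prop:capacitycoupling}, which concerns the maximum. I expect $\pr(\widetilde X_1(s)\geq j)$ to be nondecreasing in the capacity $m$ at fixed $n,s$. A cleaner route avoids monotonicity altogether: bound $p_j(s)\geq h_s(j)$ and estimate $h_s(j)$ from below uniformly in $m\geq j$.

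\textbf{Step 1 (explicit lower bound).} Write
\[
h_s(j)=\binom sj\frac{(m)_j\,(N-m)_{s-j}}{(N)_s}.
\]
Equivalently, $h_s(j)=\binom sj\prod$ of the success and failure probabilities of sequential draws. The $j$ type-1 draws each contribute at least $(m-j+1)/N$, and the $s-j$ other draws each contribute at least $(N-m-s)/(N-s)\geq 1-m/(N-s)$. For $m=j$ the formula is exactly $\binom sj/\binom{nj}{j}$, i.e.\ $(s)_j/(nj)_j\geq((s-j)/(nj))^j\approx\rho_0^j$. Then $n\rho_0^j=n\,e^{-j/(2\gl)}\geq n^{1/2}\to\infty$ since $j\leq\gl\log n$, which suggests why $\rho_0=e^{-1/(2\gl)}$ is the right choice. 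For general $m$, I would show that the tail at capacity $m$ dominates the capacity-$j$ tail. Intuitively, the probability that at least $j$ of the first $s$ cards carry type 1 grows with the capacity, because the mean occupancy $sm/N=s/n$ is fixed while the spread increases. This is the main obstacle, since the hypergeometric law with fixed mean $s/n$ and larger population of type 1 is more dispersed and so should have a heavier upper tail.

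\textbf{Step 2 (monotonicity in capacity).} To prove the dominance, I would use the binomial representation. The law of $\widetilde X_1(s)$ is that of $Y_1$ conditioned on $\sum_i Y_i=s$ with $Y_i\sim\Bin(m,p)$. Alternatively, work with Corollary~\ref{cor:capacitycomparisons}'s style of argument: realize capacity $m$ cards as $m$ slots per type and compare via the coupling of Proposition~\ref{prop:capacitycoupling}. Majorization $H^{(j)}(s)\preceq_{\mathrm{maj}}H^{(m)}(s)$ gives $W^{(j)}_s\leq W^{(m)}_s$, which controls the maximum but not a single coordinate.

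So I would fall back on a direct estimate instead. From the product formula,
\[
p_j(s)\geq h_s(j)\geq \binom sj\Bigl(\frac{m-j}{N}\Bigr)^j\Bigl(1-\frac{m}{N-s}\Bigr)^{s}.
\]
For $m\geq Aj$ with $A$ large, the first two factors give roughly $(s(m-j)/(jN))^j\approx(\rho_0 e^{-o(1)})^j$ after $\binom sj\geq(s/j)^j$. The last factor is about $e^{-sm/(N-s)}\approx e^{-\rho_0 j/(1-\rho_0)}$, which loses $e^{-Cj}$. To absorb that loss, I would enlarge to $s=\lfloor\rho_0 nj\rfloor$ against a sharper bound $\binom sj\geq (s/j)^j e^{j}/(e\sqrt j)$ via Stirling. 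This yields $h_s(j)\gtrsim j^{-1/2}(e\rho_0)^j e^{-\rho_0 j/(1-\rho_0)-o(j)}$. For $\rho_0<1$, the exponent $1+\log\rho_0-\rho_0/(1-\rho_0)$ is at least $\log\rho_0$ minus something; I would verify $n h_s(j)\geq n^{1/2-o(1)}\geq1$.

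\textbf{Step 3 (intermediate capacities).} For $j\leq m<Aj$, the same product bound holds with constants depending on $A$, with $(m-j+1)$ in place of $m-j$. The main risk is that the $e^{-\rho_0 j/(1-\rho_0)}$ loss exceeds the $\sqrt n$ room when $\gl$ is small. In that case I would instead sum $h_s(k)$ over $k$ near the binomial mode $\max(j,s m/N)$, or tune $n_0(\gl)$. Finally, I would take $n_0$ large enough that all $o(\cdot)$ terms are dominated by the slack $n^{1/2}$.
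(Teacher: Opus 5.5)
Your reduction to $np_j(s)\geq1$ with $s=\lfloor\rho_0nj\rfloor$ is correct, and so is your exact computation at capacity $m=j$. The real content of the lemma, however, is uniformity over all capacities $m\geq j$, and that is not established: neither mechanism you propose works.

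\textbf{The domination claim is false.} In Step~1 you assume that the level-$j$ tail at capacity $m$ dominates the one at capacity $j$. This fails in general. For $j=1$ and $2\leq s\leq n$, $p_1(s)=s/n$ at capacity $1$, but $p_1(s)=\frac sn-\frac{s(s-1)}{2n(2n-1)}$ at capacity $2$. Greater dispersion at a fixed mean does not give a heavier tail at every level above the mean.

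\textbf{The product bound fails for intermediate capacities.} Replacing $(m)_j$ by $(m-j+1)^j$ loses the factor $\prod_{i<j}\frac{m-i}{m-j+1}$. This factor is exponential in $j$ for each bounded ratio $m/j$, and superexponential when $m-j=O(1)$. At $m=j+1$, for example, your lower bound is at most $(2e\rho_0/(j+1))^j$, so $n$ times it tends to $0$ when $j\asymp\log n$. The true $h_s(j)$ there is of order $j\rho_0^j$. Enlarging $n_0(\gl)$ cannot repair a loss that grows with $n$. Summing over $k\geq j$ cannot help either, since $p_j(s)\leq e^2\sqrt j\,h_s(j)$ by Lemma~\ref{lem:hyp}\ref{hyp:c}; the loss sits in your lower bound for $h_s(j)$ itself.

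\textbf{The case $m\geq Aj$ is also mis-computed.} The last factor is about $e^{-\rho_0jm/(m-\rho_0j)}\approx e^{-\rho_0j}$, not $e^{-\rho_0j/(1-\rho_0)}$. With your exponent the estimate fails for every $\gl\geq1$, not for small $\gl$. Moreover, the factor $(1-j/m)^j$ forces $A$ to grow with $\gl$, which only widens the unresolved middle range.

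\textbf{The missing idea} is how to transfer the capacity comparison to a single coordinate. The coupling of Proposition~\ref{prop:capacitycoupling} controls the maximum, and the union bound converts that back:
$np_j^{(m)}(u)\geq\pr(W_u^{(m)}\geq j)\geq\pr(W_u^{(j)}\geq j)\geq1-\exp\{-np_j^{(j)}(u)\}$,
where the last step uses negative association at capacity $j$. This lower bound is always below $1$, so the paper works in two stages.
\begin{itemize}
\item It applies the bound at the earlier time $u_j=\lfloor\rho_0e^{-A/\sqrt j}nj\rfloor$. The extra cost $e^{-A\sqrt j}=n^{-o(1)}$ is absorbed by the $\sqrt n$ margin, giving $np_j^{(m)}(u_j)\geq1/2$.
\item It then applies the universal elasticity \eqref{eq:elast}, with exponent $\sqrt j/e^2$, over a logarithmic window of length at least $A/(2\sqrt j)$. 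With $A=4e^2$ this gains the factor $e^2$, so $np_j(\lfloor\rho_0nj\rfloor)\geq e^2/2>1$.
\end{itemize}

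\textbf{Alternatively}, your direct route $p_j(s)\geq h_s(j)$ can be completed, but only with a uniform local estimate. For $j<m$, \eqref{eq:hstirling} gives $h_s(j)\geq e^{-\Lambda_s(j)}/(7\sqrt j)$. In \eqref{eq:LamTaylor}, with $\mu=s/n$:
\begin{itemize}
\item the curvature term $1/x$ integrates exactly to $j\log(j/\mu)-(j-\mu)$;
\item the term with $(j-x)/(m-x)\leq1$ contributes at most $j-\mu$;
\item the remaining two terms are $O_\gl(j/n)$.
\end{itemize}
Thus $\Lambda_s(j)\leq j\log(1/\rho_0)+o(1)\leq\frac12\log n+o(1)$ uniformly in $m>j$, and $nh_s(j)\geq c_\gl\sqrt{n/\log n}$. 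The packaged bounds \eqref{eq:LamF1}--\eqref{eq:LamF2} are too lossy for this, so integrating the $1/x$ term exactly is essential.
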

\begin{proof}
	Let $t_0=\lfloor\rho_0nj\rfloor$, define $A=4e^2$ and define
	\[
		a_j=\rho_0e^{-A/\sqrt j},\quad u_j=\lfloor a_jnj\rfloor.
	\]
	For $n$ sufficiently large depending only on $\gl$, $j\leq u_j<t_0$ uniformly in $1\leq j\leq\gl\log n$.
	Consider first the process with capacity exactly $j$. A given bin is full at time $u_j$ with probability
	\[
		p_j^{(j)}(u_j)=\frac{(u_j)_j}{(nj)_j}
		\geq(a_j-1/n)^j\geq\tfrac12 a_j^j.
	\]
	The last inequality holds uniformly because $a_j\geq\rho_0e^{-A}>0$ and $j=O_\gl(\log n)$.
	Since $\rho_0^j\geq n^{-1/2}$,
	\[
		np_j^{(j)}(u_j)\geq\tfrac12\sqrt n\,
		\exp\{-A\sqrt{\gl\log n}\}\longrightarrow\infty
	\]
	uniformly in $j$.
	Negative association gives
	$\pr(W_{u_j}^{(j)}<j)\leq\exp\{-np_j^{(j)}(u_j)\}$.
	The capacity coupling therefore implies, for every $m\geq j$,
	\[
		\pr(W_{u_j}^{(m)}\geq j)\geq\tfrac12,
		\quad np_j^{(m)}(u_j)\geq\tfrac12,
	\]
	where the second inequality is the union bound.
	Apply the universal elasticity~\eqref{eq:elast} between $u_j$ and $t_0$, with $\beta=\sqrt j/e^2$.
	The rounding error is $O_\gl(1/n)$, uniformly in $j$.
	Since $j\leq\gl\log n$, for sufficiently large $n$,
	\[
		\log\frac{t_0+1}{u_j+1}\geq\frac{A}{2\sqrt j}.
	\]
	Consequently,
	\[
		np_j^{(m)}(t_0)\geq\tfrac12
		\left(\frac{t_0+1}{u_j+1}\right)^\beta
		\geq\tfrac12e^{A/(2e^2)}>1.
	\]
	By the definition of $t_j$, this gives $t_j\leq t_0\leq\rho_0nj$.
\end{proof}

\begin{lem}[Elasticity of order $j$]\label{lem:elastj}
	Let $\rho_1:=\sqrt{\rho_0}$ and $\rho_2:=(1+\rho_1)/2$. For $n\geq n_0(\gl)$, $m\geq1$, $1\leq j\leq m$ and all $j\leq s\leq \rho_1 nj$,
	\[
		\phi_j(s)=\frac{h_s(j)}{p_j(s)}\ \geq\ 1-\rho_2.
	\]
	Consequently, with $\beta_j:=(1-\rho_2)\,j$, for all $0\leq t<t'\leq \rho_1 nj$,
	\begin{align}\label{eq:elastj}
		p_j(t)\leq p_j(t')\bigl(\frac{t+1}{t'+1}\bigr)^{\beta_j}.
	\end{align}
\end{lem}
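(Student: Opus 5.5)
The plan is to repeat the argument of Lemma~\ref{lem:hyp}\ref{hyp:c}, replacing the crude ratio bound $r_j(s)\leq1$ by a bound strictly below one. Fix $j\leq s\leq\rho_1 nj$, and recall the ratios $r_k(s)=h_s(k+1)/h_s(k)$ from~\eqref{eq:ratio}. The proof of Lemma~\ref{lem:hyp}\ref{hyp:c} showed that $r_{k+1}(s)\leq r_k(s)$ for $j\leq k<\min(m,s)$. Hence it suffices to prove $r_j(s)\leq\rho_2$. Once this holds, $h_s(j+i)\leq\rho_2^i h_s(j)$ for every $i\geq0$, and summing the geometric series gives $p_j(s)\leq h_s(j)/(1-\rho_2)$, which is the claimed bound $\phi_j(s)\geq1-\rho_2$.

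To bound $r_j(s)$, I would use $s-j\leq s$, $j+1\geq j$, and $N-m-s+j+1\geq nm-m-s$. This gives
\[
	r_j(s)\leq\frac{(m-j)\,s}{j\,(nm-m-s)} .
\]
Write $s=xnj$ with $x\leq\rho_1<1$. The right side becomes $x(m-j)/(m-xj-m/n)$. Since $m-j\leq m-xj$ and $m-xj\geq(1-\rho_1)m$, this is at most
\[
	\frac{x}{1-m/(n(m-xj))}\leq\frac{\rho_1}{1-1/(n(1-\rho_1))}.
\]
This quantity is at most $\rho_2=(1+\rho_1)/2$ once $n\geq n_0(\gl)$. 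The same lower bound on $m-xj$ keeps the denominator positive, so the ratio formula is legitimate. The estimate is uniform in $m\geq j$ and in $j$. The only delicate point is this bookkeeping of the $+1$ and $m/n$ corrections, and I expect them to be harmless because $\rho_1<1$ is fixed.

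For the elasticity~\eqref{eq:elastj}, I would use Lemma~\ref{lem:hyp}\ref{hyp:b} exactly as in the derivation of~\eqref{eq:elast}. For $j-1\leq t$ with $t+1\leq\rho_1 nj$, we have $p_j(t+1)>0$, so
\[
	\frac{p_j(t)}{p_j(t+1)}=1-\frac{j}{t+1}\phi_j(t+1)\leq1-\frac{\beta_j}{t+1}\leq\exp\Bigl(-\frac{\beta_j}{t+1}\Bigr).
\]
Iterating this from $t$ to $t'$ and using $\sum_{s=t+1}^{t'}1/s\geq\log\frac{t'+1}{t+1}$ gives~\eqref{eq:elastj} whenever $j\leq t<t'\leq\rho_1 nj$. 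When $t<j$, the left side of~\eqref{eq:elastj} is zero because $p_j(t)=0$, so the inequality holds trivially.
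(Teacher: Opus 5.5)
Your proposal is correct and matches the paper's proof step for step. Monotonicity of the ratios $r_k(s)$ in $k$ reduces the claim to $r_j(s)\le\rho_2$, which you obtain by the same crude comparisons and the same threshold $n\ge(1+\rho_1)/(1-\rho_1)^2$; the elasticity is then iterated from Lemma~\ref{lem:hyp}\ref{hyp:b} exactly as for~\eqref{eq:elast}. The only cosmetic difference is that you parametrize $s=xnj$, whereas the paper uses monotonicity of $r_j(s)$ in $s$ to evaluate at $s=\rho_1 nj$.
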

\begin{proof}
	Lemma~\ref{lem:hyp}\ref{hyp:c} gives $r_l(s)\leq r_j(s)$ for $l\geq j$.
	Whenever $r_j(s)<1$, summing the geometric bound yields
	\[
		p_j(s)\leq\frac{h_s(j)}{1-r_j(s)},\quad \phi_j(s)\geq1-r_j(s).
	\]
	By~\eqref{eq:ratio}, $r_j(s)$ is nondecreasing in $s$.
	For $s\leq\rho_1nj$,
	\begin{align*}
		r_j(s)
		 & \leq \frac{(m-j)\,\rho_1 nj}{(j+1)\bigl(nm-m-\rho_1 nj\bigr)}               \\
		 & \leq \frac{(m-j)\,\rho_1 j}{(j+1)(m-\rho_1 j)}\cdot\frac{n}{n-1/(1-\rho_1)}
		\leq \rho_1\cdot\frac{n}{n-1/(1-\rho_1)}\leq \rho_2,
	\end{align*}
	where we used $m\leq (m-\rho_1 j)/(1-\rho_1)$, then $(m-j)/(m-\rho_1 j)\leq 1, j/(j+1)\leq1$, and finally $n\geq n_0(\gl)\ge (1+\rho_1)/(1-\rho_1)^2$. This proves the first claim, and~\eqref{eq:elastj} follows exactly as~\eqref{eq:elast} was derived from Lemma~\ref{lem:hyp}\ref{hyp:b},\ref{hyp:c}, with $\alpha_j$ replaced by $j(1-\rho_2)$.
\end{proof}

\begin{prop}\label{prop:logTjlog}
	There exist $C_\gl$ and $n_0=n_0(\gl)$ such that for all $n\geq n_0$, all $m\geq 1$ and all $1\leq j\leq \min(m,\gl\log n)$,
	\[
		\E\bigl(\log^2(\tau_j/t_j)\bigr)\leq {C_\gl}/{j^2}.
	\]
\end{prop}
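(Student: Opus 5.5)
We apply Lemma~\ref{lem:tailwindow} with $U=\tau_j$, $g(t)=np_j(t)$, $h=t_j$ and $R=nj$. The tail bounds $\pr(\tau_j\leq t)\leq g(t)$ and $\pr(\tau_j>t)\leq e^{-g(t)}$ are~\eqref{eq:tails}, and $g$ is nondecreasing. We use the windowed case. The strong elasticity holds on the window up to
\[
E:=\lfloor\rho_1 nj\rfloor,
\]
with $\beta=\beta_j=(1-\rho_2)j$, by Lemma~\ref{lem:elastj}. The weak elasticity~\eqref{eq:elast} holds on the whole range up to $R=nj$, with $\alpha=\alpha_j=\sqrt j/e^2$.

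The side conditions are checked as follows.
\begin{enumeratea}
\item We have $h\leq E$ because Lemma~\ref{lem:tjloc} gives $t_j\leq\rho_0nj\leq\rho_1nj$. Integrality is harmless, since $\rho_0<\rho_1$ and $\rho_1 nj-\rho_0 nj\geq 1$ for large $n$.
\item We have $\beta\leq 2h$ because $t_j\geq j\geq\beta_j$.
\item We have $\alpha\leq E$ trivially.
\item For the condition $\beta x_*\geq6$, note that $x_*=\log(E/t_j)\geq\log(\rho_1/\rho_0)-o(1)=\tfrac12\log(1/\rho_0)-o(1)=\frac1{4\gl}-o(1)$. Hence $\beta x_*\geq c_\gl j$. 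This is at least $6$ once $j\geq j_0(\gl)$.
\end{enumeratea}
For the finitely many $j<j_0(\gl)$, we instead use Proposition~\ref{prop:logTj}. It gives the bound $C_0/j\leq C_0j_0/j^2$, so these indices are absorbed into $C_\gl$.

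For $j\geq j_0$, the bound~\eqref{eq:windowbound} reads
\[
\E\log^2(\tau_j/t_j)\leq C\bigl(\beta_j^{-2}+\alpha_j^{-2}e^{-K}+(\alpha_j\beta_j)^{-1}e^{-K/2}\bigr),\qquad K=e^{\beta_jx_*/2}\geq e^{c_\gl j}.
\]
The first term is $C_\gl/j^2$. The other two terms are at most $C e^{-e^{c_\gl j}/2}$, times factors of order at most $j^{-1}$. Since the double exponential decays much faster, they are also $O_\gl(j^{-2})$. Combining the two ranges of $j$ proves the claim, uniformly in $m\geq j$ and $n\geq n_0(\gl)$.

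The main obstacle is making sure the window is genuinely of macroscopic logarithmic width, i.e., $\log(E/t_j)\gtrsim_\gl 1$ uniformly in $m$. This is exactly where the uniform location bound of Lemma~\ref{lem:tjloc} and the gap $\rho_0<\rho_1$ are needed. Rounding of $E$ must also be controlled near the endpoint, which requires $n$ large relative to $\gl$. Without this gap, $K$ would not be large, and the weak-elasticity tail beyond $E$ would only give the $1/j$ rate of Proposition~\ref{prop:logTj}.
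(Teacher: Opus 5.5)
Your proposal is correct and follows the paper's proof essentially step for step. You use the windowed case of Lemma~\ref{lem:tailwindow} with $\beta=(1-\rho_2)j$, $\alpha=\sqrt j/e^2$ and $E=\lfloor\rho_1 nj\rfloor$, the location bound of Lemma~\ref{lem:tjloc} to make $x_*=\log(E/t_j)$ bounded below by a constant depending only on $\gl$ (your value $\log(\rho_1/\rho_0)=1/(4\gl)$ is right), and Proposition~\ref{prop:logTj} for the finitely many $j<j_0(\gl)$, after which the double-exponential terms are indeed $O_\gl(j^{-2})$.
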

\begin{proof}
	Define $\beta=(1-\rho_2)j$, $\alpha=\sqrt j/e^2$, $g(t)=np_j(t)$, $R=nj$ and $E=\lfloor\rho_1nj\rfloor$. By Lemma~\ref{lem:tjloc}, $t_j\leq\rho_0nj$. Increasing $n_0(\gl)$ if necessary gives
	\[
		x_*:=\log(E/t_j)\geq\tfrac12\log(\rho_1/\rho_0)=:c_\gl>0
	\]
	uniformly in $j\geq1$. For the finitely many indices $j<j_0(\gl)$, Proposition~\ref{prop:logTj} gives the required bound after increasing $C_\gl$. Choose $j_0$ so that $\beta x_*\geq6$ for the remaining indices.
	The strong and weak elasticities are~\eqref{eq:elastj} and~\eqref{eq:elast}.
	Also $\beta\leq t_j$ and $\alpha\leq E$. Lemma~\ref{lem:tailwindow} therefore gives
	\[
		\E\log^2(\tau_j/t_j)\leq C\left(j^{-2}+j^{-1}e^{-K}
		+j^{-3/2}e^{-K/2}\right),\quad
		K\geq\exp\bigl((1-\rho_2)c_\gl j/2\bigr).
	\]
	The last two terms are bounded by $C_\gl/j^{2}$, proving the claim.
\end{proof}

\begin{proof}[Proof of the bound~\eqref{eq:varSlog}]
	Exactly as in the proof of~\eqref{eq:varSuniv}, by~\eqref{eq:Hlog}, Proposition~\ref{prop:logTjlog}, the representation~\eqref{eq:STj} and Minkowski's inequality,
	\begin{align*}
			\norm{S_{n,m}-\E S_{n,m}}_2
            &\leq \sum_{j=1}^m \sqrt{\var(\cH_{\tau_j-1})}\\
            &\leq \sum_{j=1}^m 2\sqrt{\E(\log^2(\tau_j/t_j))}\leq 2\sqrt{C}\sum_{j=1}^m\frac1j\leq 2\sqrt C\,(1+\log m)
	\end{align*}
	for $n\geq n_0(\gl)$ and $m\leq\gl\log n$. Squaring gives~\eqref{eq:varSlog} with $C_\gl=4C$.
\end{proof}

\begin{rem}
	Proposition~\ref{prop:logTjlog} is uniform in the capacity $m\geq j$, not only in decks with $m\leq\gl\log n$. For fixed $j\geq2$, Proposition~\ref{prop:exp} shows that the $j^{-2}$ scale is attained in the fixed-level limit.
	It does not establish matching lower bounds when $j$ grows with $n$.
\end{rem}

\subsection{The Gaussian regime $j>2\log n$}\label{ssec:gaussregime}
We now treat the $j>2\log n$ case. In the interior, occupancy deviations of order $\sqrt{j\log n}$ correspond to order $\sqrt{\log n}$ standard deviations.
Near capacity, the variance decreases, and the appropriate deviation depends on $m-j$. The proof below uses separate interior and saturation estimates to obtain a common elasticity lower bound of order $\sqrt{j\log n}$ for $j<m$. The terminal level $j=m$ is treated separately.
Throughout this subsection, $C$ and $n_0$ are absolute.
No attempt is made to optimize their values.

\subsubsection{A Stirling lower bound for the hypergeometric p.m.f.}
Let $H(y):=-y\log y-(1-y)\log(1-y)$, $y\in[0,1]$, and for $0\leq k\leq \min(m,t)$ with $t-k\leq N-m$, define
\begin{align}\label{eq:Lambda}
	\Lambda_t(k):=N H\biggl(\frac tN\biggr)-mH\biggl(\frac km\biggr)-(N-m)H\biggl(\frac{t-k}{N-m}\biggr).
\end{align}
Stirling's formula, $\sqrt{2\pi a}(a/e)^ae^{1/(12a+1)}\leq a!\leq \sqrt{2\pi a}(a/e)^ae^{1/(12a)}$, gives for integers $1\leq b\leq a-1$
\begin{align}\label{eq:binomrobbins}
	\frac{e^{aH(b/a)}}{3\sqrt{b(1-b/a)}}\ \leq\ \binom ab\ \leq\ \frac{e^{aH(b/a)}}{2\sqrt{b(1-b/a)}},
\end{align}
since $\binom ab=\sqrt{\frac{a}{2\pi b(a-b)}}\,e^{aH(b/a)}e^{\theta}$ with $-\frac16\leq\theta\leq\frac{1}{12}$, and $(2\pi)^{-1/2}e^{-1/6}>\frac13$, $(2\pi)^{-1/2}e^{1/12}<\frac12$.

\begin{lem}\label{lem:stirling}
	Let $n\geq 3$, $1\leq k\leq m-1$, and let $t$ be an integer with $1\leq t-k\leq N-m-1$ and $\mu:=t/n\leq k$. Then
	\begin{align}\label{eq:hstirling}
		h_t(k)\ \geq\ \frac{e^{-\Lambda_t(k)}}{7\sqrt{k(1-k/m)}}.
	\end{align}
	Moreover, if in addition $k-\mu\leq (n-1)\mu/2$, then
	\begin{align}
		\Lambda_t(k) & \leq (k-\mu)^2\biggl(\frac1\mu+\frac{1}{m-k}\biggr), \label{eq:LamF1}\text{ and } \\
		\Lambda_t(k) & \leq \frac{(k-\mu)^2}{\mu}+2(k-\mu). \label{eq:LamF2}
	\end{align}
\end{lem}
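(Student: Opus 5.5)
Write $h_t(k)=\binom mk\binom{N-m}{t-k}/\binom Nt$ and apply \eqref{eq:binomrobbins} to each of the three binomial coefficients. The hypotheses $1\le k\le m-1$, $1\le t-k\le N-m-1$ (hence also $1\le t\le N-1$) are exactly what is needed for each coefficient to have index strictly inside its range. Use the lower bound for the two numerator coefficients and the upper bound for the denominator. The exponentials combine to $e^{-\Lambda_t(k)}$, and the prefactor becomes
\[
\frac{2}{9}\,\frac{\sqrt{t(1-t/N)}}{\sqrt{k(1-k/m)}\,\sqrt{(t-k)(1-(t-k)/(N-m))}} .
\]
It then suffices to show that the ratio $t(1-t/N)/\bigl((t-k)(1-(t-k)/(N-m))\bigr)$ is bounded below by a constant; $(2/9)\cdot c\geq 1/7$ requires $c\geq (9/14)^2\approx 0.41$.

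\textbf{Prefactor.} Since $\mu=t/n\le k$, we have $t-k\le t-\mu=t(1-1/n)$, so $(t-k)/t\le 1-1/n$. For the other factor, use $N-m=N(1-1/n)$ and $t-k\ge0$ to get
\[
1-\frac{t-k}{N-m}\ \ge\ 1-\frac{t}{N(1-1/n)} .
\]
This is not obviously comparable to $1-t/N$ when $t$ is near $N$, so rewrite it as $\bigl(N-m-t+k\bigr)/(N-m)$. Together with $N-t\le N-m-t+k+ (m-k)$, this needs care. A cleaner route is the direct identity
\[
\frac{t(N-t)}{N}\cdot\frac{N-m}{(t-k)(N-m-t+k)}\ \ge\ \frac{t}{t-k}\cdot\frac{N-t}{N}\cdot\frac{N-m}{N-m-t+k}\cdot 1,
\]
followed by the observation that $(N-t)/(N-m-t+k)\ge 1$ is not guaranteed either. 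So I will instead bound the whole ratio using $t/(t-k)\ge n/(n-1)$ and $(N-m)/N=(n-1)/n$. These cancel and leave $(N-t)/(N-m-t+k)$, which is $\ge 1$ exactly when $m-k\ge 0$. That holds, so the ratio is $\ge1$ and the constant $2/9\ge1/7$ suffices. Here $n\ge3$ is used only to keep the sizes non-degenerate.

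\textbf{The bounds on $\Lambda$.} Regard $\Lambda_t(k)$ as a function of $k$ with $t$ fixed. It is convex (its second derivative is $1/k+1/(m-k)+1/(t-k)+1/(N-m-t+k)$ up to the $H''$ scaling) and vanishes with zero derivative at $k=tm/N=\mu$: it is the entropy form of the hypergeometric rate function, minimized at the mean. Taylor's theorem with integral remainder then gives
\[
\Lambda_t(k)=\int_\mu^k (k-x)\Bigl(\tfrac1x+\tfrac1{m-x}-\tfrac1{t-x}... \Bigr)dx .
\]
Care is needed with signs. The exact second derivative is $\Lambda''(x)=\frac1x+\frac1{m-x}-\frac{1}{t-x}\cdot 0\ldots$; I would recompute it: $\partial_k[-mH(k/m)]=\log\frac{k}{m-k}$ and $\partial_k[-(N-m)H((t-k)/(N-m))]=-\log\frac{t-k}{N-m-t+k}$. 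Hence
\[
\Lambda''(x)=\frac1x+\frac1{m-x}+\frac1{t-x}+\frac1{N-m-t+x}.
\]
For $x\in[\mu,k]$, use $1/x\le1/\mu$ and $1/(m-x)\le 1/(m-k)$. The remaining two terms are of relative size $1/(n-1)$ of the first two: $t-x\ge t-k\ge (n-1)\mu-(k-\mu)\ge (n-1)\mu/2$ by the extra hypothesis, and similarly for the last term. Integrating $(k-x)\Lambda''(x)$ gives $\tfrac12(k-\mu)^2$ times at most $(1/\mu+1/(m-k))(1+O(1/(n-1)))$. Since $n\ge3$, the factor $\tfrac12\cdot(1+2/(n-1))\le 1$, which yields \eqref{eq:LamF1}. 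For \eqref{eq:LamF2}, instead bound the $\frac1{m-x}$ and last terms by their contribution through the first-derivative form $\log\frac{m-\mu}{m-k}$ type terms, or simply use $\log(1+y)\le y$ on $\Lambda'(x)=\log\frac{x(N-m-t+x)}{(m-x)(t-x)}$ to get $\Lambda'(x)\le (x-\mu)/\mu\cdot(\ldots)+2$-type bounds. Integrating then gives $(k-\mu)^2/\mu+2(k-\mu)$.

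\textbf{Main obstacle.} The main difficulty is the bookkeeping in this second part, namely choosing for \eqref{eq:LamF2} the bound on $\Lambda'$ that avoids $1/(m-k)$. I would first try $\Lambda'(x)\le \log\frac{x}{\mu}+\log\frac{m-\mu}{m-x}+\log\frac{t-\mu}{t-x}$-style decompositions, each bounded via $\log u\le u-1$.
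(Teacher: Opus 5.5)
Your proofs of \eqref{eq:hstirling} and \eqref{eq:LamF1} are correct and follow the paper's route. For \eqref{eq:hstirling} you apply the Robbins--Stirling bounds to the three coefficients. For \eqref{eq:LamF1} you use Taylor's formula with integral remainder around $\mu$, bounding the four curvature terms by $1/\mu$, $1/(m-k)$, $2/((n-1)\mu)$ and $1/((n-1)(m-k))$. Your prefactor estimate is slightly sharper than the paper's. Using $t/(t-k)\ge n/(n-1)$ to cancel $(N-m)/N$ leaves $(N-t)/(N-m-t+k)\ge1$, whereas the paper gets $\ge 1/2$. Your intermediate remark that this ratio being $\ge1$ ``is not guaranteed'' is wrong, as you notice two lines later.

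The gap is \eqref{eq:LamF2}: you do not prove it, and the route you sketch cannot work. Your three-log decomposition of $\Lambda_t'$ omits the nonnegative term $\log\frac{N-m-t+x}{N-m-t+\mu}$, so it is not an upper bound as written. The real obstruction, though, is the saturation term $\log\frac{m-\mu}{m-x}$. Bounding it pointwise by $\log u\le u-1$ gives $\frac{x-\mu}{m-x}$, whose integral over $[\mu,k]$ is $(m-\mu)\log\frac{m-\mu}{m-k}-(k-\mu)$. This is not $O(k-\mu)$ when $m-k$ is small compared with $m-\mu$.

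No pointwise bound $\Lambda_t'(x)\le 2(x-\mu)/\mu+2$ can hold either. Take $\mu=m-10$, $k=m-1$ and $m,n$ large: then $\Lambda_t'(k)\to\log 10>2$ while $(k-\mu)/\mu\to0$. This near-saturation regime is exactly where \eqref{eq:LamF2} is used, namely Case 3 of Lemma~\ref{lem:tjlocG}, with $m-j$ possibly $1$ and $j-\mu\approx\frac14\log n$. There your bound would be of order $\log n\log\log n$, which exceeds $\log n$, so $np_j(t)\ge1$ could not be concluded.

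The missing step is to stay with the second-order form $\Lambda_t(k)=\int_\mu^k(k-x)\Lambda_t''(x)\,dx$ and treat each curvature term separately:
\begin{itemize}
\item For the saturation term, use $\frac{k-x}{m-x}\le1$ rather than $\frac1{m-x}\le\frac1{m-k}$. It then contributes at most $k-\mu$, however close $k$ is to $m$.
\item For the fourth term, use $N-m-t+x\ge(n-1)(m-\mu)$ and $k-\mu\le m-\mu$. It contributes at most $\frac{k-\mu}{2(n-1)}$.
\item The $1/x$ and $1/(t-x)$ terms give $\frac{(k-\mu)^2}{2\mu}\bigl(1+\frac2{n-1}\bigr)\le\frac{(k-\mu)^2}{\mu}$, as in \eqref{eq:LamF1}.
\end{itemize}
Summing, for $n\ge3$,
\[
\Lambda_t(k)\le\frac{(k-\mu)^2}{\mu}+(k-\mu)\Bigl(1+\frac1{2(n-1)}\Bigr)\le\frac{(k-\mu)^2}{\mu}+2(k-\mu).
\]
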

\begin{proof}
	Applying~\eqref{eq:binomrobbins} to the three binomial coefficients in $h_t(k)=\binom mk\binom{N-m}{t-k}\big/\binom Nt$ (lower bounds for the two in the numerator, the upper bound for the denominator) gives
	\[
		h_t(k)\ \geq\ \frac29\sqrt{\frac{t(1-t/N)}{k(1-k/m)\,(t-k)\bigl(1-\frac{t-k}{N-m}\bigr)}}\;e^{-\Lambda_t(k)}.
	\]
	Here $t/(t-k)\geq1$, and with $t=n\mu$, $N=nm$,
	\[
		\frac{1-t/N}{1-\frac{t-k}{N-m}}=\frac{(N-t)(N-m)}{N\,(N-m-t+k)}=\frac{(n-1)(m-\mu)}{n(m-\mu)-(m-k)}\ \geq\ 1-\frac1n\ \geq\ \frac12,
	\]
	using $m-k\geq0$. Since $\frac{2}{9\sqrt2}>\frac17$, this proves~\eqref{eq:hstirling}.

	For the bounds on $\Lambda_t$, regard $\Lambda_t(x)$ as a function of a real variable $x\in(0,m)$ with $0<t-x<N-m$. It is smooth there, with
	\begin{align*}
	\Lambda_t'(x) &=\log\frac{x}{m-x}-\log\frac{t-x}{N-m-t+x},\\
		\Lambda_t''(x)&=\frac1x+\frac{1}{m-x}+\frac{1}{t-x}+\frac{1}{N-m-t+x}>0.
	\end{align*}
	At $x=\mu=t/n$ we have $\frac{t-\mu}{N-m-t+\mu}=\frac{(n-1)\mu}{(n-1)(m-\mu)}=\frac{\mu}{m-\mu}$, so $\Lambda_t'(\mu)=0$, and 
    $$
    \Lambda_t(\mu)=NH(\mu/m)-mH(\mu/m)-(N-m)H(\mu/m)=0.
    $$ 
    Hence, for $k\geq \mu,$
    we have
	\begin{align}\label{eq:LamTaylor}
		\Lambda_t(k)=\int_\mu^k (k-x)\,\Lambda_t''(x)\,dx.
	\end{align}
	For $x\in[\mu,k]$, the four curvature terms satisfy
	\begin{align*}
		\frac1x     & \leq\frac1\mu,
		            & \frac1{m-x}           & \leq\frac1{m-k},        \\
		\frac1{t-x} & \leq\frac2{(n-1)\mu},
		            & \frac1{N-m-t+x}       & \leq\frac1{(n-1)(m-k)}.
	\end{align*}
	For the third bound, use $t-k=(n-1)\mu-(k-\mu)\geq(n-1)\mu/2$.
	For the fourth, use $x\geq\mu$ and $m-\mu\geq m-k$.
	Insert these bounds in~\eqref{eq:LamTaylor}.
	Since $\int_\mu^k(k-x)\,dx=(k-\mu)^2/2$, we obtain
	\[
		\Lambda_t(k)\leq \frac{(k-\mu)^2}{2}\biggl(1+\frac{2}{n-1}\biggr)\biggl(\frac1\mu+\frac1{m-k}\biggr)\leq (k-\mu)^2\biggl(\frac1\mu+\frac1{m-k}\biggr)
	\]
	for $n\geq3$, which is~\eqref{eq:LamF1}.
	For~\eqref{eq:LamF2}, treat the terms $1/(m-x)$ and $1/(N-m-t+x)$ separately.	We have
    \begin{align*}
	   \int_\mu^k\frac{k-x}{m-x}dx &\leq k-\mu\text{ as } \frac{k-x}{m-x}\leq1,\text{ and }\\ 
       \int_\mu^k\frac{k-x}{N-m-t+x}dx&\leq \frac{(k-\mu)^2}{2(n-1)(m-\mu)}\leq\frac{k-\mu}{2(n-1)}\text{ as }k-\mu\leq m-\mu.
    \end{align*}
    Hence
	\[
		\Lambda_t(k)\leq \frac{(k-\mu)^2}{2}\biggl(\frac1\mu+\frac{2}{(n-1)\mu}\biggr)+(k-\mu)\biggl(1+\frac{1}{2(n-1)}\biggr)\leq \frac{(k-\mu)^2}{\mu}+2(k-\mu)
	\]
	for $n\geq 3$.
\end{proof}

\subsubsection{Location of $t_j$ in the Gaussian regime}
For $2\log n\leq j\leq m-1$, define the target deviation
\begin{align}\label{eq:Dj}
	D_j:=\begin{cases}
		     \frac16\sqrt{j\log n},     & m\geq 3j,           \\[2pt]
		     \frac16\sqrt{(m-j)\log n}, & m<3j,\ m-j\geq 225, \\[2pt]
		     \frac14\log n,             & m-j< 225.
	     \end{cases}
\end{align}
Here, instead of $225$, any large enough constant will work.
In all three cases $D_j\leq j/6$.
For the first two, use $\log n\leq j/2$ and, in the second case, $m-j<2j$.
In the third case, $D_j\leq j/8$.

\begin{lem}[Location of $t_j$, Gaussian regime]\label{lem:tjlocG}
	There is an absolute constant $n_0$ such that for all $n\geq n_0$, $m\geq1$ and $2\log n\leq j\leq m-1$,
	\[
		t_j\leq n\,(j-D_j),\text{ \ie\ } d_j:=j-{t_j}/{n}\ \geq\ D_j.
	\]
\end{lem}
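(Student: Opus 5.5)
To prove $t_j\le n(j-D_j)$, I will show that at the time $t^\circ:=\lfloor n(j-D_j)\rfloor$ one has $g(t^\circ)=np_j(t^\circ)\ge1$. Since $t_j=\min\{t:np_j(t)\ge1\}$, this gives the claim. The tail probability satisfies $p_j(t^\circ)\ge h_{t^\circ}(k)$ for every single $k\ge j$ in the support. So I only need a pointwise lower bound on the hypergeometric mass at one well-chosen $k$, and Lemma~\ref{lem:stirling} provides it.

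I take $k=j$ and $\mu=t^\circ/n$, so that $k-\mu=D_j+O(1/n)$, or $k-\mu\le D_j+1/n$ after rounding. I then check the hypotheses of Lemma~\ref{lem:stirling}.
\begin{itemize}
\item Since $j\le m-1$, we have $1\le k\le m-1$.
\item The bound $\mu\le k$ is clear.
\item Because $D_j\le j/6$, the difference $k-\mu\le j/6+1$ is at most $(n-1)\mu/2$ once $n\ge 4$, as $\mu\ge 5j/6-1/n$.
\item The range condition $1\le t-k\le N-m-1$ follows from $t\approx nj-nD_j$ with $n$ large and $m\ge j+1$.
\end{itemize}
Lemma~\ref{lem:stirling} then gives
\[np_j(t^\circ)\ge \frac{n\,e^{-\Lambda_{t^\circ}(j)}}{7\sqrt{j(1-j/m)}}.\]
It remains to bound $\Lambda$ in each of the three cases of \eqref{eq:Dj}.

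\textbf{Case $m\ge3j$.} I use \eqref{eq:LamF1} with $\mu\ge 5j/6$ and $m-k\ge 2j$. This gives $\Lambda\le D^2(6/(5j)+1/(2j))\le 2D^2/j\le (\log n)/18+o(1)$. The prefactor is at least $1/(7\sqrt j)\ge 1/(7\sqrt{N})$, which is too weak in general because $j$ may be huge. However, $j\le m$ is arbitrary only up to the point where $\sqrt j$ must be compared with $n^{1-1/18}$. That comparison can fail when $m$ is astronomically large, since $j$ may exceed $n^2$.

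So the main obstacle is that when $j$ is much larger than any power of $n$, a single pmf value is too small. To handle this, I sum $h_t(k)$ over a block $k\in[j,j+\sqrt{j(1-j/m)}]$, exactly as in the lower bound of Lemma~\ref{lem:bulkmean}. Every point of this block satisfies $k-\mu\le D_j+\sqrt{j}$. Hence $\Lambda\le 2(D_j+\sqrt j)^2/j\le (\log n)/18+O(\sqrt{\log n})+O(1)$, uniformly over the block. The block length cancels the $\sqrt{j(1-j/m)}$ prefactor, leaving $np_j\ge c\,n^{1-1/18}e^{-O(\sqrt{\log n})}\ge1$ for $n\ge n_0$.

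\textbf{Case $m<3j$, $m-j\ge225$.} I use \eqref{eq:LamF1} again. Now $1/\mu\le 2/j\le 2\cdot 2/(m-j)$ and $1/(m-k)$ is of order $1/(m-j)$ across a block of width $\sqrt{m-j}/2$. The constant $1/6$ and the threshold $225$ are there so that $(m-j)\log n/36$ times $O(1/(m-j))$ stays below $(1/2)\log n$, and so that the block width $\sqrt{m-j}$ is at most $(m-j)/2$. The block sum again cancels $\sqrt{k(1-k/m)}\asymp\sqrt{m-j}$, giving $np_j\ge c\sqrt n\ge1$.

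\textbf{Case $m-j<225$.} I use \eqref{eq:LamF2} at the single point $k=j$. This gives $\Lambda\le D^2/\mu+2D\le (\log n)^2/(16\cdot(5j/6))+(\log n)/2$. Since $j\ge2\log n$, this is at most $(\log n)(3/80+1/2)$. The prefactor is $\sqrt{j(m-j)/m}\le 15$, so $np_j\ge c\,n^{0.46}\ge1$.

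The hardest part will be the bookkeeping in the first two cases, namely ensuring that the block of $k$ values stays in range while keeping the constants $1/6$ and $225$ sufficient. Taking $n_0$ large absorbs all the $o(\log n)$ errors.
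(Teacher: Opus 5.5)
Your proposal is correct and follows essentially the paper's proof. Both arguments show $np_j(\lfloor n(j-D_j)\rfloor)\ge1$ by lower-bounding $p_j$ with a block of hypergeometric masses of length $\asymp\sqrt j$ (Case 1), $\asymp\sqrt{m-j}$ (Case 2), or a single point $k=j$ (Case 3). The length cancels the $\sqrt{k(1-k/m)}$ prefactor in Lemma~\ref{lem:stirling}, and $\Lambda$ is controlled by \eqref{eq:LamF1} in the first two cases and \eqref{eq:LamF2} in the third. The only difference is cosmetic: you apply the Stirling bound uniformly at every point of the block, whereas the paper first uses that $h_t$ is nonincreasing above $j$ to reduce the block sum to $b\,h_t(j+b-1)$ and then applies Stirling once.
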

\begin{proof}
	Define $t=\lfloor n(j-D_j)\rfloor$, $\mu=t/n$ and $d=j-\mu$.
	Then $D_j\leq d\leq D_j+1/n$.
	We will show that $$
    np_j(t)\geq1.
    $$
	Since $D_j\leq j/6$, we have $\mu\geq5j/6-1/n\geq j/2$.
	The ratio estimates in the proof of Lemma~\ref{lem:hyp}.\ref{hyp:c} show that $h_t$ is nonincreasing above $j$.
	Hence, for every integer $b\geq1$ with $j+b-1\leq m-1$,
	\begin{align}\label{eq:blocksum}
		p_j(t)\ \geq\ \sum_{i=j}^{j+b-1}h_t(i)\ \geq\ b\,h_t(k),\quad k:=j+b-1.
	\end{align}

	We will apply Lemma~\ref{lem:stirling} to $h_t(k)$.
	Each block chosen below has $j\leq k\leq\min(2j-1,m-1)$.
	For $n\geq9$, its hypotheses follow from
	\begin{align*}
		t-k   & \geq n\mu-2j\geq(n/2-2)j\geq1, \\
		t-k   & \leq(n-1)j-nD_j\leq(n-1)m-1,   \\
		k-\mu & \leq2j\leq4\mu\leq(n-1)\mu/2.
	\end{align*}
	Here $m-j\geq1$ and $\mu\geq j/2$.
	Since $j\geq2\log n$, we can assume $j\geq200$ by increasing $n_0$.

	\smallskip\noindent\textbf{Case 1: $m\geq 3j$.} Take $b:=\lceil\sqrt j\rceil$, so $k\leq 2j-1\leq m-1$ and $m-k\geq m-2j\geq j\geq \mu$. By~\eqref{eq:hstirling}, $\frac{b}{7\sqrt{k(1-k/m)}}\geq \frac{\sqrt j}{7\sqrt{2j}}\geq \frac1{10}$, hence $p_j(t)\geq \frac1{10}e^{-\Lambda_t(k)}$. By~\eqref{eq:LamF1} and $m-k\geq\mu\geq j/2$,
	\[
		\Lambda_t(k)\leq \frac{2(k-\mu)^2}{\mu}\leq \frac{4(k-\mu)^2}{j},\quad k-\mu=d+b-1\leq \frac16\sqrt{j\log n}+\sqrt j+2\leq \frac14\sqrt{j\log n}
	\]
	for $n\geq n_0$ (as $\sqrt j+2\leq \frac{1}{12}\sqrt{j\log n}$ once $\log n\geq 144(1+2/\sqrt{200})^2$). Thus $\Lambda_t(k)\leq \frac14\log n$ and $np_j(t)\geq \frac{n^{3/4}}{10}\geq1$.

	\smallskip\noindent\textbf{Case 2: $m<3j$ and $m-j\geq 225$.} Take $b:=\lceil\sqrt{m-j}\rceil\leq \frac{m-j}{2}$, so $k\leq m-1$ and $m-k\geq (m-j)/{2}$. Since $k(1-k/m)\leq m-k\leq m-j$, $\frac{b}{7\sqrt{k(1-k/m)}}\geq\frac17$. Hence $p_j(t)\geq\frac17 e^{-\Lambda_t(k)}$. Now $m-j<2j\leq 4\mu$, so $\frac1\mu\leq\frac{4}{m-j}$, and~\eqref{eq:LamF1} gives
	\[
		\Lambda_t(k)\leq (k-\mu)^2\biggl(\frac{4}{m-j}+\frac{2}{m-j}\biggr)=\frac{6(k-\mu)^2}{m-j},
	\]
	where
	\[
		k-\mu\leq \frac16\sqrt{(m-j)\log n}+\sqrt{m-j}+2\leq \sqrt{m-j}\biggl(\frac{\sqrt{\log n}}{6}+\frac{17}{15}\biggr),
	\]
	using $2\leq \frac{2}{15}\sqrt{m-j}$. In particular, $\Lambda_t(k)\leq 6\Bigl(\frac{\sqrt{\log n}}{6}+\frac{17}{15}\Bigr)^2\leq \log n-\log 7$ for $n\geq n_0$, and $np_j(t)\geq1$.

	\smallskip\noindent\textbf{Case 3: $m-j\leq 224$.} Take $b=1$, $k=j$. Then $k(1-k/m)\leq m-j\leq 224$, so~\eqref{eq:hstirling} gives $p_j(t)\geq h_t(j)\geq \frac{1}{105}e^{-\Lambda_t(j)}$. Here $d\leq \frac14\log n+\frac1n$ and $\mu=j-d\geq 2\log n-\frac14\log n-1\geq \frac32\log n$, so~\eqref{eq:LamF2} gives
	\[
		\Lambda_t(j)\leq \frac{d^2}{\mu}+2d\leq \frac{(\frac14\log n+1)^2}{\frac32\log n}+\frac12\log n+2\leq \frac{3}{5}\log n\leq \log n-\log 105
	\]
	for $n\geq n_0$, and again $np_j(t)\geq1$.
\end{proof}

\subsubsection{Elasticity of order $\sqrt{j\log n}$}

\begin{lem}\label{lem:elastG}
	Let $n\geq 3$, $1\leq j\leq m-1$, and let $s$ be an integer with $j\leq s\leq nj$. Write $\mu:=s/n$ and $d:=j-\mu\geq0$. Then
	\begin{align}\label{eq:phiG}
		\phi_j(s)=\frac{h_s(j)}{p_j(s)}\ \geq\ 1-r_j(s)\ \geq\ \frac{m\,(d-1/n)}{(j+1)(m-\mu)}.
	\end{align}
	Consequently, for $n\geq n_0$, $m\geq1$, $2\log n\leq j\leq m-1$ and all integers $j\leq s\leq n(j-D_j/2)$,
	\begin{align}\label{eq:betaG}
		j\phi_j(s)\geq\beta_j:=\frac{m(D_j/2-1/n)}{2(m-j+D_j/2)},\text{ and } \frac{1}{60}\sqrt{j\log n}\leq \beta_j\leq\min\biggl(t_j,\frac{3j}{2}\biggr).
	\end{align}
	In particular, for all $0\leq t<t'\leq n(j-D_j/2)$,
	\begin{align}\label{eq:elastG}
		p_j(t)\leq p_j(t')\biggl(\frac{t+1}{t'+1}\biggr)^{\beta_j}.
	\end{align}
\end{lem}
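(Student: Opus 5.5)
The plan has three parts: first prove the pointwise hazard bound \eqref{eq:phiG}, then specialize it to the window $s\leq n(j-D_j/2)$ and check the size of $\beta_j$ case by case, and finally iterate exactly as in the derivation of~\eqref{eq:elast}.

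\emph{Step 1: the hazard bound \eqref{eq:phiG}.} I reuse the ratio estimates from the proof of Lemma~\ref{lem:hyp}\ref{hyp:c}. There it was shown that $r_l(s)\leq r_j(s)\leq1$ for all $l\geq j$ and $s\leq nj$. Hence $h_s(j+i)\leq h_s(j)r_j(s)^i$, and summing the geometric series gives $p_j(s)\leq h_s(j)/(1-r_j(s))$ when $r_j(s)<1$. This is the first inequality in~\eqref{eq:phiG}. If $r_j(s)=1$, the second inequality will show that its right side is nonpositive, so nothing is lost.

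For the second inequality I substitute $s=n\mu$ and $N=nm$ into~\eqref{eq:ratio} and expand. The numerator of $1-r_j(s)$ is
\[
(j+1)(N-m-s+j+1)-(m-j)(s-j).
\]
A direct expansion should reduce it to
\[
n\bigl[m(j-\mu)+(m-\mu)\bigr]-m+2j+1\ \geq\ nm\,(d-1/n),
\]
using $m\geq\mu$. The denominator is $(j+1)\bigl(n(m-\mu)-(m-j)+1\bigr)$, which is at most $(j+1)n(m-\mu)$ because $m-j\geq1$. Dividing gives~\eqref{eq:phiG}.

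\emph{Step 2: the bound $j\phi_j(s)\geq\beta_j$ and the size of $\beta_j$.} For $s\leq n(j-D_j/2)$ we have $d\geq D_j/2$. The map $d\mapsto m(d-1/n)/(m-j+d)$ is nondecreasing, since $m-\mu=m-j+d$ and $m-j\geq0$. Combined with $j/(j+1)\geq1/2$, this gives $j\phi_j(s)\geq\beta_j$.

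For the upper bound, I write $m=(m-j+D_j/2)+(j-D_j/2)$, which yields
\[
\beta_j\leq D_j/4+j/2\leq j\leq\min(t_j,3j/2),
\]
because $t_j\geq j$ by Proposition~\ref{prop:logTj}.

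For the lower bound $\beta_j\geq\tfrac1{60}\sqrt{j\log n}$, I split according to the cases in~\eqref{eq:Dj}.
\begin{itemize}
\item \emph{Case $m\geq3j$.} Here $m\geq m-j+D_j/2$, so $\beta_j\geq(D_j/2-1/n)/2\approx\sqrt{j\log n}/24$.
\item \emph{Case $m<3j$, $m-j\geq225$.} If $D_j/2\leq m-j$, then $\beta_j\gtrsim mD_j/(8(m-j))\geq j\sqrt{\log n}/(48\sqrt{2j})$. Otherwise $\beta_j\gtrsim m/4\geq j/4$, and $j\geq\sqrt{2j\log n}$ because $j\geq2\log n$.
\item \emph{Case $m-j<225$.} Here $(\log n/8)/(225+\log n/8)\to1$, so $\beta_j\gtrsim m/4\geq j/4\geq\sqrt{j\log n}/4$ for $n\geq n_0$.
\end{itemize}

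\emph{Step 3: the elasticity \eqref{eq:elastG}.} By Lemma~\ref{lem:hyp}\ref{hyp:b}, for $j-1\leq t\leq n(j-D_j/2)-1$,
\[
\frac{p_j(t)}{p_j(t+1)}=1-\frac{j\phi_j(t+1)}{t+1}\leq\exp\Bigl(-\frac{\beta_j}{t+1}\Bigr).
\]
The case $t<j$ is trivial because $p_j(t)=0$. Iterating and using $\sum_{s=t+1}^{t'}1/s\geq\log\frac{t'+1}{t+1}$ gives~\eqref{eq:elastG}, as in the derivation of~\eqref{eq:elast}.

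I expect the main obstacle to be the exact algebraic expansion in Step 1, where the sign of the leftover terms must be tracked carefully. The second-case constant bookkeeping in Step 2 is the other delicate point. Everything else is routine.
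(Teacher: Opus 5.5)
Your proposal follows the paper's proof step for step, with one exception noted below. The shared steps are the geometric-series bound $p_j(s)\le h_s(j)/(1-r_j(s))$ and the same expansion of $1-r_j(s)$; your numerator $n[m(j-\mu)+(m-\mu)]-m+2j+1$ equals the paper's $nm(d+1)-n\mu-m+2j+1$. They continue with monotonicity in $d$, the three cases of~\eqref{eq:Dj}, and the iteration via Lemma~\ref{lem:hyp}\ref{hyp:b}. Your upper bound $\beta_j\le D_j/4+j/2\le j$ is a valid alternative to the paper's $\beta_j\le m/2$ split on $j\gtrless m/2$.

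In Step 1, when you divide, you should say explicitly what happens for $d<1/n$. There the right side of~\eqref{eq:phiG} is negative while $1-r_j(s)>0$. A negative lower bound on the numerator does not by itself survive division by the smaller actual denominator.

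\textbf{The gap.} One estimate does not deliver the stated constant. Take the subcase $m<3j$, $m-j\ge225$, $D_j/2\le m-j$. You bound $m/\sqrt{m-j}$ below using $m\ge j$ and $m-j<2j$ separately, which gives
\[
\beta_j\gtrsim \frac{\sqrt{j\log n}}{48\sqrt2}\approx\frac{\sqrt{j\log n}}{67.9}.
\]
This is smaller than the claimed $\sqrt{j\log n}/60$, and slightly smaller still once the dropped $-1/n$ is restored. So your chain does not prove~\eqref{eq:betaG} as stated.

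The fix is the one the paper uses: bound with $m-j\le m$, so that $m/\sqrt{m-j}\ge\sqrt m\ge\sqrt j$. To make the $-1/n$ rigorous, note that for large $n$ one has $D_j\ge\frac52\sqrt{\log n}$ in all three cases, hence $D_j/2-1/n\ge 2D_j/5$. Then
\[
\beta_j\ \ge\ \frac{m(D_j/2-1/n)}{4(m-j)}\ \ge\ \frac{mD_j}{10(m-j)}=\frac{m}{60}\sqrt{\frac{\log n}{m-j}}\ \ge\ \frac{\sqrt{m\log n}}{60}\ \ge\ \frac{\sqrt{j\log n}}{60}.
\]
Minimizing $(j+x)/\sqrt{x}$ jointly over $x=m-j>0$ would even give $m/\sqrt{m-j}\ge2\sqrt j$. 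Make this correction, and replace your ``$\approx$'' and ``$\gtrsim$'' in the other cases by the explicit inequality $D_j/2-1/n\ge 2D_j/5$, where there is ample slack. Your argument then proves the lemma as stated.
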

\begin{proof}
	The first inequality in~\eqref{eq:phiG} was shown in the proof of Lemma~\ref{lem:elastj} (it only needs $r_j(s)<1$, which holds for $s\leq nj$). From~\eqref{eq:ratio}, with $s=n\mu$ and $N=nm$, a direct expansion gives
	\[
		1-r_j(s)=\frac{(j+1)(N-m-s+j+1)-(m-j)(s-j)}{(j+1)(N-m-s+j+1)}
		=\frac{nm(d+1)-n\mu-m+2j+1}{(j+1)\bigl(n(m-\mu)-(m-j-1)\bigr)}.
	\]
	The numerator is at least $nmd-m+n(m-\mu)\geq nmd-m$ (as $\mu\leq j\leq m$), and the denominator is at most $(j+1)n(m-\mu)$ because $m-j-1\geq0$. For $d\geq1/n$, these comparisons give the second inequality in~\eqref{eq:phiG}.
	If $d<1/n$, its right-hand side is negative whereas $1-r_j(s)>0$, so the inequality also holds.

	The function $d\mapsto m(d-1/n)/(m-j+d)$ is nondecreasing.
	In the stated window, $d\geq D_j/2$.
	Together with $j/(j+1)\geq1/2$, this proves $j\phi_j(s)\geq\beta_j$.

	For the upper bounds, first note $\beta_j\leq m/2$.
	If $j\geq m/2$, then $\beta_j\leq j\leq t_j$.
	If $j<m/2$, then $m-j+D_j/2>m/2$ and $\beta_j<D_j/2\leq j\leq t_j$.
	In either case, $\beta_j\leq3j/2$.

	For the lower bound, use the three cases in~\eqref{eq:Dj}.
	For sufficiently large $n$, each gives $D_j\geq(5/2)\sqrt{\log n}$.
	In the middle case, this follows from $m-j\geq225$.
	Thus $D_j/2-1/n\geq2D_j/5$ after increasing $n_0$.
    \begin{enumerate}[label={Case~\arabic*.},leftmargin=5em]
        \item ($m\geq3j$): $m-j+D_j/2\leq m$, so $\beta_j\geq \frac{D_j/2-1/n}{2}\geq \frac{D_j}{5}=\frac{\sqrt{j\log n}}{30}$.

    	\item ($m<3j$, $m-j\geq225$):
	If $D_j/2\geq m-j$, then
	\[
		\beta_j\geq\frac{m(D_j/2-1/n)}{2D_j}\geq\frac m5.
	\]
	Otherwise,
	\[
		\beta_j\geq\frac{m(D_j/2-1/n)}{4(m-j)}
		\geq\frac{mD_j}{10(m-j)}
		=\frac m{60}\sqrt{\frac{\log n}{m-j}}
		\geq\frac{\sqrt{m\log n}}{60}.
	\]
	Both bounds imply $\beta_j\geq\sqrt{j\log n}/60$.
	Here $m\geq j\geq2\log n$.

    	\item ($m-j\leq 224$): for $n\geq n_0$ we have $D_j/2=\frac18\log n\geq 224\geq m-j$, so as in the first sub-case of Case 2, $\beta_j\geq \frac m5\geq\frac15\sqrt{j\log n}$.
    \end{enumerate}

	Finally, Lemma~\ref{lem:hyp}\ref{hyp:b} gives
	\[
		\frac{p_j(t)}{p_j(t+1)}
		=1-\frac{j\phi_j(t+1)}{t+1}
		\leq\exp\left(-\frac{\beta_j}{t+1}\right)
	\]
	for $j-1\leq t\leq n(j-D_j/2)-1$.
	Iteration proves~\eqref{eq:elastG}.
\end{proof}

\subsubsection{Concentration of $\tau_j$ and proof of Theorem~\ref{thm:var}}

\begin{prop}\label{prop:logTjG}
	There exist absolute constants $C$ and $n_0$ such that for all $n\geq n_0$, $m\geq 1$ and $2\log n\leq j\leq m-1$,
	\[
		\E\bigl(\log^2(\tau_j/t_j)\bigr)\leq \frac{C}{j\log n}.
	\]
	Moreover, for all $n\geq2$ and $m\geq1$, $\E\bigl(\log^2(\tau_m/t_m)\bigr)\leq C/m^2$.
\end{prop}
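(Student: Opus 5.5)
The plan is to apply Lemma~\ref{lem:tailwindow} with $U=\tau_j$, $g(t)=np_j(t)$, $h=t_j$ and $R=nj$. The tail hypotheses are~\eqref{eq:tails}. For the interior levels $2\log n\leq j\leq m-1$ I would use the windowed case with
\[
E:=\lfloor n(j-D_j/2)\rfloor,\qquad \beta:=\beta_j,\qquad \alpha:=\sqrt j/e^2 .
\]
The strong elasticity on $[0,E]$ is~\eqref{eq:elastG}, and the weak elasticity on $[0,R]$ is~\eqref{eq:elast}. The side conditions are immediate. Lemma~\ref{lem:elastG} gives $\beta_j\leq t_j\leq 2t_j$, and clearly $\alpha\leq j\leq E$. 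By Lemma~\ref{lem:tjlocG}, $t_j\leq n(j-D_j)$, so $t_j\leq E$ and
\[
x_*=\log(E/t_j)\geq \log\frac{j-D_j/2-1/n}{j-D_j}\geq \frac{D_j}{3j}
\]
for $n\geq n_0$. Here I use $D_j\leq j/6$ and $D_j\to\infty$.

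The main step is to check that $\beta x_*\geq c\log n$ for an absolute $c>0$. I would do this case by case in~\eqref{eq:Dj}, using the lower bounds on $\beta_j$ from the proof of Lemma~\ref{lem:elastG}.
\begin{itemize}
\item Case 1 ($m\geq 3j$): $\beta_j\geq\sqrt{j\log n}/30$ and $D_j=\sqrt{j\log n}/6$, so $\beta x_*\gtrsim \log n$.
\item Case 2, sub-case $\beta_j\geq m/5$: $\beta x_*\gtrsim m\sqrt{(m-j)\log n}/j\gtrsim\sqrt{\log n}\cdot 15\geq 6$. This follows because $m\geq j$ and $m-j\geq225$.
\item Case 2, sub-case $\beta_j\geq (m/60)\sqrt{\log n/(m-j)}$: $\beta x_*\gtrsim (m/j)\log n\geq\log n$.
\item Case 3: $\beta_j\geq m/5$ and $D_j=\log n/4$, so $\beta x_*\gtrsim\log n$.
\end{itemize}
In every case $\beta x_*\geq 6$ for $n\geq n_0$. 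Hence $K=e^{\beta x_*/2}\geq e^3$, and in fact $K$ grows at least like $e^{c\sqrt{\log n}}$.

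With this in hand,~\eqref{eq:windowbound} gives
\[
\E\log^2(\tau_j/t_j)\leq C\bigl(\beta_j^{-2}+e^4 j^{-1}e^{-K}+e^2 (j^{1/2}\beta_j)^{-1}e^{-K/2}\bigr).
\]
The first term is at most $3600C/(j\log n)$ by~\eqref{eq:betaG}. For the other two, note that $e^{-K/2}\leq e^{-e^{c\sqrt{\log n}}/2}\ll(\log n)^{-1}$. Since $\beta_j\geq\sqrt{j\log n}/60$, both terms are therefore $O(1/(j\log n))$ once $n\geq n_0$. This yields the first claim.

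For the terminal level $j=m$, the tail is a single atom, since $p_m(s)=h_s(m)$. Thus $\phi_m\equiv1$ wherever $p_m(s)>0$, and Lemma~\ref{lem:hyp}\ref{hyp:b} gives exactly
\[
\frac{p_m(t)}{p_m(t+1)}=1-\frac{m}{t+1}\leq e^{-m/(t+1)},\qquad m-1\leq t\leq nm-1 .
\]
This gives~\eqref{eq:elast} with exponent $m$ on the whole range $[0,nm]$, with no restriction on $n\geq2$. The full-window case $E=R=nm$ of Lemma~\ref{lem:tailwindow} then applies with $\beta=m\leq t_m\leq 2t_m$, which yields $\E\log^2(\tau_m/t_m)\leq C/m^2$.

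The only delicate point I anticipate is the uniform lower bound on $\beta x_*$ in the middle case of~\eqref{eq:Dj}. There the gain comes from $m/j$ rather than directly from $\log n$, and the two sub-cases for $\beta_j$ have to be tracked separately. If that bound fails somewhere, I would fall back on enlarging the constant $225$ in~\eqref{eq:Dj}.
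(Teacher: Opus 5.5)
Your proposal is correct and follows the paper's proof essentially step by step. It uses the same window $E=\lfloor n(j-D_j/2)\rfloor$, the same $\beta_j$ and $\alpha=\sqrt j/e^2$, the same case-by-case lower bound on $\beta x_*$, and the same single-atom full-window argument at $j=m$. The only blemish is your announced target $\beta x_*\geq c\log n$, which fails in the Case 2 sub-case $D_j/2\geq m-j$, where only $\beta x_*\gtrsim\sqrt{\log n}$ holds; the paper's $\kappa_n=\min\{\log n/2160,\sqrt{\log n}/12\}$ records exactly this. As you observe, $\sqrt{\log n}$ still forces $e^{-K/2}\ll 1/\log n$, so no fallback on the constant $225$ is needed.
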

\begin{proof}
	We follow the proof of Proposition~\ref{prop:logTjlog}, with $\beta:=\beta_j$ from Lemma~\ref{lem:elastG}, $\alpha:=\sqrt j/e^2$, $g(t)=np_j(t)$, and the window end
	\[
		E:=\lfloor n(j-D_j/2)\rfloor,\quad x_*:=\log\frac{E}{t_j}.
	\]
	By Lemma~\ref{lem:tjlocG}, $t_j\leq n(j-D_j)$, hence
	\[
		\frac{E}{t_j}\geq \frac{j-D_j/2-1/n}{j-D_j}=1+\frac{D_j/2-1/n}{j-D_j}\geq 1+\frac{D_j}{3j},\quad x_*\geq \log\biggl(1+\frac{D_j}{3j}\biggr)\geq \frac{D_j}{6j},
	\]
	using $D_j\leq j$ and $\log(1+y)\geq y/2$ for $y\in[0,1]$.
	The lower bounds for $\beta$ give explicit window products.
	In Case 1, $\beta x_*\geq(\log n)/1080$.
	In Case 2 with $D_j/2\geq m-j$, the product is at least $D_j/30\geq\sqrt{\log n}/12$.
	In the other subcase, it is at least $D_j^2/\{60(m-j)\}=(\log n)/2160$.
	In Case 3, it is at least $D_j/30=(\log n)/120$.
	Hence
	\begin{align}\label{eq:betaxstar}
		\beta x_*\ \geq\ \kappa_n:=\min\left\{\frac{\log n}{2160},\ \frac{\sqrt{\log n}}{12}\right\}\xrightarrow[n\to\infty]{}\infty.
	\end{align}

	For $n\geq n_0$,~\eqref{eq:betaxstar} ensures $\beta x_*\geq6$. Apply Lemma~\ref{lem:tailwindow} with $U=\tau_j$, $h=t_j$, $R=nj$, and the elasticities~\eqref{eq:elastG} and~\eqref{eq:elast}.
	Its hypotheses follow from $\beta\leq t_j$ and $\alpha\leq E$. Hence
	\[
		\E\log^2(\tau_j/t_j)\leq
		C\left(\frac1{\beta^2}+\frac{e^{-K}}{\alpha^2}
		+\frac{e^{-K/2}}{\alpha\beta}\right),\quad K\geq e^{\kappa_n/2}.
	\]
	By~\eqref{eq:betaG}, $\beta^2\geq j\log n/3600$. Also $\alpha^2=j/e^4$ and, for sufficiently large $n$, $\beta\geq\sqrt j$. Since $e^{-K/2}\leq1/\log n$ for large $n$, all three terms are bounded by $C/(j\log n)$.

	For $j=m$, $p_m(t)=h_t(m)$ and $\phi_m(s)=1$. Lemma~\ref{lem:hyp}\ref{hyp:b} yields
	\[
		\frac{p_m(t)}{p_m(t+1)}=1-\frac m{t+1}\leq e^{-m/(t+1)}
	\]
	for $m-1\le t\le N-1$. Thus the full-window elasticity has exponent $m$ up to $N$. The first part of Lemma~\ref{lem:tailwindow}, with $h=t_m$, $E=R=N$ and $\beta=m\leq t_m$, proves $\E\log^2(\tau_m/t_m)\leq C/m^2$.
\end{proof}

\subsection{Proof of Theorem~\ref{thm:var}}
	Choose $n_0$ large enough for Propositions~\ref{prop:logTjlog}, with $\gl=2$, and~\ref{prop:logTjG}.
	Let $n\geq n_0$ and $m\geq1$.
	Equation~\eqref{eq:Hlog} gives
	$\sqrt{\var(\cH_{\tau_j-1})}\leq2\sqrt{\E\log^2(\tau_j/t_j)}$ for every $j$.
	Use~\eqref{eq:STj} and Minkowski's inequality to obtain
	\begin{align*}
		\norm{S_{n,m}-\E S_{n,m}}_2
		 & \leq \sum_{1\leq j\leq \min(m,2\log n)}\frac{2\sqrt{C_2}}{j}
		+\sum_{2\log n< j\leq m-1}\frac{2\sqrt{C}}{\sqrt{j\log n}}+\ind_{\{m>2\log n\}}\frac{2\sqrt C}{m} \\
		 & \leq 2\sqrt{C_2}\,\bigl(1+\log(2\log n)\bigr)+\frac{4\sqrt{C m}}{\sqrt{\log n}}+2\sqrt C,
	\end{align*}

	The first sum uses Proposition~\ref{prop:logTjlog} with $\gl=2$.
	The second uses the Gaussian estimate for $j<m$.
	The last term uses the terminal-level estimate when $m>2\log n$.
	Squaring proves the theorem. \qed

\section{Hitting Times at Fixed Levels and Fixed Numbers of Types}\label{sec:hitting}
We identify the hitting-time law for each fixed level $j$.
The limit shows that the $j^{-2}$ variance scale is attained for fixed $j\geq2$.
It also explains the constants in the fixed-$m$ mean asymptotics.
These results are separate from the proof of the uniform score CLT.
We prove the fixed-level hitting-time limit stated in Proposition~\ref{prop:exp} first.

\subsection{Proof of Proposition~\ref{prop:exp}}
	Fix $j\geq2$, $y>0$ and an integer $r\geq1$. For sufficiently large $n$ we have $n\geq r$ and $t\geq rj$ below.
	Define
	\[
		\kappa=(m)_j/m^j,\quad
		b=(j!n^{j-1}/\kappa)^{1/j},\quad t=\lfloor by^{1/j}\rfloor,\quad
		Z=\sum_{i=1}^n\ind_{\{\widetilde X_i(t)\geq j\}}.
	\]
	Here $t\asymp n^{1-1/j}$ uniformly in $m\geq j$, for fixed $y>0$. For nonnegative integers $a_1,\ldots,a_r$ with $\sum_i a_i\leq N$, sampling without replacement gives
	\[
		\E\prod_{i=1}^r\biggl(\widetilde X_i(t)\biggr)_{a_i}
		=\frac{(t)_{\sum a_i}\prod_i(m)_{a_i}}{(N)_{\sum a_i}}.
	\]
	For all $r$ selected urns to contain exactly $j$ balls, the probability is
	\[
		\frac{(t)_{rj}(m)_j^r}{(j!)^r(N)_{rj}}
		\frac{(N-rm)_{t-rj}}{(N-rj)_{t-rj}}.
	\]
	The final ratio is the probability that, after the prescribed $rj$ selected balls, the remaining sample avoids all remaining balls of those $r$ types.
	A union bound bounds its distance from one by
	\[
		\frac{(t-rj)r(m-j)}{N-rj}=O_{r,j}(t/n).
	\]
	The formula also covers $m=j$, when the ratio is one.
	It follows uniformly in $m\geq j$ that the exact-$j$ probability is
	\[
		\left(\frac{\kappa t^j}{j!n^j}\right)^r
		\{1+O_{r,j}(t/n+1/t+1/n)\}.
	\]
	For the excess probability, use a union bound over the urns containing at least $j+1$ balls.
	Apply the factorial identity with one $a_i=j+1$ and all other $a_i=j$.
	Markov's inequality gives
	\[
		\pr(\widetilde X_i(t)\geq j\ (1\leq i\leq r),\ \max_{i\leq r}\widetilde X_i(t)>j)
		=O_{r,j}((t/n)^{rj+1}).
	\]
	Since $\kappa\geq j!/j^j$, the excess is relatively $O_{r,j}(t/n)$. Thus
	\[
		\E(Z)_r=(n)_r\pr(\widetilde X_i(t)\geq j\ (1\leq i\leq r))\longrightarrow y^r.
	\]
	The factorial-moment criterion for Poisson convergence~\cite{BHJ92} gives $Z\Rightarrow\mathrm{Poi}(y)$, and therefore $\pr(\tau_j>t)\to e^{-y}$. All estimates are uniform in $m\geq j$. Equivalently, the argument applies to every sequence $m=m_n\geq j$. This proves the uniform exponential limit.

	The one-urn computation also gives $np_j(\lfloor bv\rfloor)\to v^j$, uniformly in $m\geq j$.
	Hence $t_j/b\to1$.
	Apply the full-window tail estimates in Lemma~\ref{lem:tailwindow} with $\beta=\sqrt j/e^2$.
	They bound the tails of $\log(\tau_j/t_j)$ by integrable exponential tails.
	Every fixed polynomial moment is therefore uniformly integrable.
	Since $\log(t_j/b)\to0$, these estimates establish uniform integrability of $\log^2(\tau_j/b)$, and justify the first and second logarithmic moment limits stated below.\qed

\smallskip Let $E\sim\mathrm{Exp}(1)$, and recall that $\E(\log E)=-\gamma$ and $\var(\log E)=\pi^2/6$, where $\gamma\approx 0.57721$ is the Euler--Mascheroni constant.
By Proposition~\ref{prop:exp},
\[
	j\log \tau_j-(j-1)\log n-\log\frac{j!}{\kappa_{m,j}}\implies \log E,
\]
and the uniform integrability established in its proof gives convergence of the first two moments.
Hence, for fixed $j\geq2$ and $m\geq j$, as $n\to\infty$,
\begin{align}\label{eq:logTjasymp}
	\E(\log \tau_j)=\bigl(1-\frac1j\bigr)\log n+\frac1j\log\frac{m^j}{\binom mj}-\frac{\gamma}{j}+o(1),
	\quad
	\var(\log \tau_j)\to\frac{\pi^2}{6j^2},
\end{align}
where we used $j!/\kappa_{m,j}=m^j/\binom mj$.

\begin{cor}[Compensator limit for fixed multiplicity]\label{cor:fixedmS}
	Fix $m\geq1$.
	Let $E_2,\ldots,E_m$ be independent exponential random variables of mean one.
	Then, as $n\to\infty$,
	\[
		S_{n,m}-\E S_{n,m}
		\implies-\sum_{j=2}^{m}\frac{\log E_j+\gamma}{j},
		\quad
		\var(S_{n,m})\to\frac{\pi^2}{6}\sum_{j=2}^{m}\frac1{j^2}.
	\]
	Both sums are zero when $m=1$.
\end{cor}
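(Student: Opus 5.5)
The starting point is the representation \eqref{eq:STj}:
\[
S_{n,m}-\E S_{n,m}=-\sum_{j=1}^m\bigl(\cH_{\tau_j-1}-\E\cH_{\tau_j-1}\bigr).
\]
Since $m$ is fixed, it suffices to show that the vector $(\cH_{\tau_j-1}-\E\cH_{\tau_j-1})_{2\le j\le m}$ converges jointly in distribution to $\bigl((\log E_j+\gamma)/j\bigr)_{2\le j\le m}$ with independent $E_j$, and that second moments converge. The $j=1$ term vanishes because $\tau_1\equiv1$. Since $\tau_j\ge j$ and $\tau_j\to\infty$ in probability for $j\ge2$, we have $\cH_{\tau_j-1}-\log\tau_j\to\gamma$ almost surely along the sequence. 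By \eqref{eq:Hlog} together with the uniform integrability from the proof of Proposition~\ref{prop:exp}, this difference is also $L^2$-bounded. Hence it suffices to work with $\log\tau_j$. Proposition~\ref{prop:exp} gives
\[
j\log\tau_j-(j-1)\log n-\log(j!/\kappa_{m,j})\implies\log E,
\]
so $\log\tau_j-\E\log\tau_j\implies(\log E_j+\gamma)/j$ for each fixed $j$, using \eqref{eq:logTjasymp} for the centering.

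The main obstacle is the joint law and the asymptotic independence across levels. The levels live on different scales, $\tau_j\asymp n^{1-1/j}$, so the ratios $\tau_j/\tau_{j+1}\to0$ in probability. I would prove joint convergence by computing, for $y_2,\dots,y_m>0$ and $t_j=\lfloor b_j y_j^{1/j}\rfloor$ as in the proof of Proposition~\ref{prop:exp}, the joint probability $\pr(\tau_j>t_j\ \forall j)=\pr(Z_j=0\ \forall j)$, where $Z_j$ counts urns with at least $j$ balls at time $t_j$. The plan is to extend the factorial-moment computation to mixed factorial moments $\E\prod_j(Z_j)_{r_j}$, which count ordered tuples of distinct urns reaching level $j$ at time $t_j$ (a single urn may appear at several levels, but those overlap contributions are of lower order).

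For a configuration of urns with prescribed levels at the nested times $t_2<t_3<\cdots$, I would use the factorial identity for sampling without replacement applied to the time-ordered sample. The probability that $r_j$ distinct urns each have at least $j$ balls among the first $t_j$ cards, for all $j$, should factor as $\prod_j\bigl(\kappa_{m,j}t_j^j/(j!\,n^j)\bigr)^{r_j}(1+o(1))$. The reason is that the events involve disjoint urns, and the negative dependence from sampling without replacement only produces relative errors of order $O(\max_j t_j/n)\to0$, exactly as in the single-level union-bound estimate. Tuples sharing an urn across levels cost an extra factor like $n^{-1}$ relative to the main term, because they require a single urn to reach two levels while only one index of $n$ is saved. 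Multiplying by $\prod_j(n)_{r_j}$ then gives $\E\prod_j(Z_j)_{r_j}\to\prod_j y_j^{r_j}$. The multivariate factorial-moment criterion \cite{BHJ92} then yields independent Poisson limits, so $\pr(\tau_j>t_j\ \forall j)\to\prod_j e^{-y_j}$, which is the independence of the $E_j$. By monotonicity in each $y_j$, this determines the joint limit of $\bigl(\kappa_{m,j}\tau_j^j/(j!n^{j-1})\bigr)_j$.

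With joint convergence in hand, the continuous mapping theorem gives the distributional limit of $S_{n,m}-\E S_{n,m}$; the centering constants match by the first-moment statement in \eqref{eq:logTjasymp}. For the variance, the uniform $L^2$-integrability of each $\log(\tau_j/b_j)$ (each is controlled by Proposition~\ref{prop:logTj} and the exponential tails from Lemma~\ref{lem:tailwindow}) makes the square of the finite sum uniformly integrable, by Minkowski's inequality. Therefore second moments converge as well, and
\[
\var(S_{n,m})\to\var\Bigl(\sum_{j=2}^m\frac{\log E_j}{j}\Bigr)=\frac{\pi^2}{6}\sum_{j=2}^m\frac1{j^2}.
\]
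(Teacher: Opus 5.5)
Your proposal is correct. Its outer structure matches the paper's: the representation \eqref{eq:STj}, replacing $\cH_{\tau_j-1}$ by $\log\tau_j+\gamma$, uniform integrability from the exponential tails of Lemma~\ref{lem:tailwindow}, and independence giving the variance constant.

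\textbf{Where the routes differ.} The difference lies in the key step, the asymptotic independence of the levels. The paper computes nothing there. It invokes the joint Poisson approximation for birthday counts of He and Ottolini~\cite[Theorem 4.1]{He_2023} to get $\pr(\tau_j>t_j'\ \forall j)\to\exp(-\sum_j y_j)$, and then applies that result to subsets of indices. You instead derive the joint limit yourself through mixed factorial moments $\E\prod_j(Z_j)_{r_j}$, extending the one-level computation in the proof of Proposition~\ref{prop:exp}. The paper's route is shorter but outsources the decorrelation. Yours is self-contained and uses only the sampling-without-replacement identity already in the paper, at the cost of controlling tuples in which one urn is counted at two levels.

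\textbf{Correction to your overlap heuristic.} Such overlaps are not suppressed by an extra factor $n^{-1}$. The events $\{\widetilde X_1(t_j)\ge j\}$ and $\{\widetilde X_1(t_{j'})\ge j'\}$, $j<j'$, are strongly positively correlated. The correct bound, via a union bound over the $j'$ cards involved, is
\[
\pr\bigl(\widetilde X_1(t_j)\ge j,\ \widetilde X_1(t_{j'})\ge j'\bigr)\lesssim_m \Bigl(\frac{t_j}{n}\Bigr)^{j}\Bigl(\frac{t_{j'}}{n}\Bigr)^{j'-j}\asymp \frac1n\Bigl(\frac{t_j}{t_{j'}}\Bigr)^{j}.
\]
So a shared urn contributes a relative factor $(t_j/t_{j'})^j\asymp n^{-(j'-j)/j'}$. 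This still vanishes, by the scale separation $t_j\asymp n^{1-1/j}$ that you noted. With this correction, the multivariate factorial-moment criterion gives independent Poisson limits, as you claim.

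\textbf{Two smaller points.} First, $\cH_{\tau_j-1}-\log\tau_j\to\gamma$ holds in probability, not almost surely, since there is no common probability space in $n$. No uniform integrability is needed for it, because $0\le\cH_{a-1}-\log a<\gamma$ for every integer $a\ge1$. Second, uniform integrability of the squared sum follows from $\bigl(\sum_{j=2}^m X_j\bigr)^2\le(m-1)\sum_{j=2}^m X_j^2$, not from Minkowski's inequality.
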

\begin{proof}
	For $m=1$, $W_t=1$ for $t\geq1$ and $S_{n,1}=\cH_n$.
	Now suppose $m\geq2$.
	Set $b_j=(j!n^{j-1}/\kappa_{m,j})^{1/j}$.
	He and Ottolini~\cite[Theorem 4.1]{He_2023} prove a joint Poisson approximation for the birthday counts.
	Their last time without a $j$-fold repeat is $\tau_j-1$ in our notation.
	At $t_j'=\lfloor b_j y_j^{1/j}\rfloor$, their Poisson parameter tends to $y_j$.
	For fixed positive $y_2,\ldots,y_m$, the times are eventually ordered because their powers of $n$ strictly increase.
	Their joint survival estimate therefore gives
	\[
		\pr(\tau_j>t_j'\text{ for all }2\leq j\leq m)
		\longrightarrow\exp\left(-\sum_{j=2}^{m}y_j\right).
	\]
	Applying the same result to subsets of indices gives the joint distributional limit
	\[
		\left((\tau_j/b_j)^j\right)_{j=2}^{m}
		\implies(E_j)_{j=2}^{m}.
	\]
	The full-window bounds in Lemma~\ref{lem:tailwindow} give bounded fourth moments of $\log(\tau_j/b_j)$.
	Thus the logarithms and their pairwise products are uniformly integrable.
	The first two joint moments consequently converge.
	The harmonic-number error tends to zero in $L^2$, since it is bounded by $C/\tau_j$ and $\tau_j\to\infty$ in probability.
	Use~\eqref{eq:STj} and center each term.
	The joint limit yields the displayed law, and independence gives the variance constant.
\end{proof}

\begin{rem}[Consistency with the mean for fixed $m$]

	The expansion~\eqref{eq:logTjasymp} recovers the fixed-$m$ mean asymptotics of~\cite{He_2023}.
	For $j\geq2$, $\tau_j\to\infty$ in probability.
	The deterministic error in $\cH_{a-1}=\log a+\gamma+O(1/a)$ is bounded for $a\geq1$.
	Hence $\E\cH_{\tau_j-1}=\E\log\tau_j+\gamma+o(1)$ since the error is uniformly bounded and tends to zero in probability.
	The level $j=1$ contributes $\cH_0=0$.
	Inserting~\eqref{eq:logTjasymp} in the representation~\eqref{eq:STj} and using $\cH_N=\log n+\log m+\gamma+o(1)$ gives
	\begin{align*}
		\E S_{n,m} & =m\,\cH_N-\sum_{j=2}^m\bigl(\E(\log \tau_j)+\gamma\bigr)+o(1)
		=\cH_m(\log n+\gamma)+\sum_{j=1}^{m-1}\frac1j\log\binom mj+o(1),
	\end{align*}
	which agrees with $\cH_m\cH_n+\sum_{j=1}^{m-1}\frac1j\log\binom mj$ up to $o(1)$.
\end{rem}

% \begin{rem}[Occupancy variance and bridge scaling]
% 	The exact one-coordinate variance is
% 	\[
% 		\sigma_t^2:=\var(\widetilde X_i(t))=
% 		\frac tn\left(1-\frac tN\right)\frac{N-m}{N-1},\quad N>1.
% 	\]
% 	As $n\to\infty$, the final factor tends to one uniformly in $m$. For fixed $n$, it instead tends to $1-1/n$ as $m\to\infty$. The corresponding joint bridge limit is proved in Section~\ref{sec:bb}.
% 	An interior Gaussian extreme-value heuristic suggests a maximum fluctuation scale $\sigma_t/\sqrt{\log n}$, but requires separate treatment of lattice and saturation effects.
% \end{rem}

\subsection{Proof of Proposition~\ref{prop:fixedn}}
	Give every card an independent uniform $[0,1]$ mark and read the reversed deck in increasing order of the marks.
	For type $i$, let $F_{i,m}$ be its empirical distribution function.
	Define $G_m=n^{-1}\sum_iF_{i,m}$.
	The independent empirical-process invariance principles give jointly
	\[
		\sqrt m(F_{i,m}-\mathrm{id})\implies B_i,
		\quad \sqrt m(G_m-\mathrm{id})\implies\overline B.
	\]
	Let $q_m(u)$ be the pooled order statistic of rank $\lfloor uN\rfloor$.
	Write $r_m(u)=\lfloor uN\rfloor/N$.
	Almost surely, $G_m(q_m(u))=r_m(u)$ whenever $1\leq\lfloor uN\rfloor\leq N$.
	Uniform convergence of $G_m$ to the identity gives $q_m(u)\to u$ uniformly on compact subintervals.
	The exact identity
	\[
		\sqrt m(q_m(u)-u)
		=-\sqrt m(G_m(q_m(u))-q_m(u))
		+\sqrt m(r_m(u)-u)
	\]
	and stochastic equicontinuity yield the inverse-process limit.
	The last term is at most $1/(n\sqrt m)$ in absolute value.
	Apply the same equicontinuity to each $F_{i,m}$.
	Since $mF_{i,m}(q_m(u))=\widetilde X_i(\lfloor uN\rfloor)$, this gives
	\[
		\sqrt m(q_m(u)-u)\implies-\overline B(u),\quad
		\frac{\widetilde X_i(\lfloor uN\rfloor)-\lfloor uN\rfloor/n}{\sqrt m}
		\implies B_i(u)-\overline B(u).
	\]
	The rank rounding error is $O(m^{-1/2})$ because $n$ is fixed.
	This proves the joint bridge limit on truncated intervals.
	See~\cite{Bill99,vdVW96} for the invariance principle and stochastic equicontinuity.

	Define $A_t=W_t-t/n\geq0$. Identity~\eqref{eq:SW} gives
	\[
		\frac{S_{n,m}-m}{\sqrt m}=\frac1{\sqrt m}\sum_{t=1}^N\frac{A_t}{t}.
	\]
	Since a maximum is bounded by the Euclidean norm, we have
	$
		\norm{A_t}_2^2\leq\sum_{i=1}^n\var(\widetilde X_i(t))
		\leq t(1-t/N).
	$
	Consequently, for $K=\lfloor\delta N\rfloor$,
	$
		\norm{\frac1{\sqrt m}\sum_{t=1}^K {A_t}/t}_2
		\leq\frac1{\sqrt m}\sum_{t=1}^K t^{-1/2}
		\leq2\sqrt{n\delta}.
	$
	At the other endpoint, for $0<\delta<1/2$,
	$
		\norm{\frac1{\sqrt m}\sum_{t\geq(1-\delta)N} {A_t}/t}_2
		\leq C_n(\delta+N^{-1})^{3/2}.
	$
	This follows by writing $r=N-t$, using $A_N=0$, and summing
	$\sqrt{r/N}/\sqrt t\leq C\sqrt r/N$ before dividing by $\sqrt m$. The limiting Gaussian vector satisfies the analogous bounds since
	$\E\sum_i(B_i(u)-\overline B(u))^2=(n-1)u(1-u)$.
	On $[\delta,1-\delta]$, the bridge convergence, continuity of the maximum, and Riemann sums give convergence of the truncated integrals.
	The endpoint bounds and the converging-together argument prove the assertion.
	They also bound the whole expression in $L^2$, so its first moments are uniformly integrable.
	Finally,
	$
		\E\max_i(B_i(u)-\overline B(u))
		=\sqrt{u(1-u)}\,\cZ_n,
	$
	which integrates to $(\pi/2)\cZ_n$. The same norm bound proves that $\mathcal{B}_n$ exists in $L^2$.\qed

\begin{rem}[SDE representations]\label{rem:fixednSDE}
	The bridge functional admits a diffusion description.
	This gives a stochastic representation of its distribution.
	Define
	\[
		P_n=I_n-n^{-1}\boldsymbol1\boldsymbol1^{\mathsf T},\quad
		\mathsf H_n=\{x\in\dR^n:\sum x_i=0\},\quad g(x)=\max_i x_i.
	\]
	For standard $n$-dimensional Brownian motion $\mvW$, consider
	\begin{align}\label{eq:bridgeSDE}
		dX(u)& =P_nd\mvW(u)-\frac{X(u)}{1-u}\,du,
		      \ X(0) =0,\quad\text{ and }\quad
		dA(u) =\frac{g(X(u))}{u}\,du,
		      \  A(0)=0.
	\end{align}
	The linear SDE has the unique solution
	$
		X(u)=(1-u)\int_0^u {P_n d\mvW(v)}/(1-v).
	$
	Its covariance is $(\min(u,v)-uv)P_n$.
	Thus $X$ has the law of $(B_i-\overline B)_{i\leq n}$ and extends continuously to $X(1)=0$.
	The endpoint estimates above make the improper integral for $A$ well defined in $L^2$.
	Hence $\cB_n\stackrel d=A(1)$.
\end{rem}

\smallskip\noindent\textbf{Declaration on the use of AI.}
The mathematical ideas in the original manuscript were developed by the authors without the use of AI.
The authors used ChatGPT at the final revision stage to assist with proofreading, grammar checking, and reviewing the mathematical arguments and their presentation.  All AI suggestions incorporated into the manuscript were reviewed by the authors, who take full responsibility for the content of the paper.

\bibliographystyle{abbrvurl}
\bibliography{cg}
\end{document}

%% file: style.tex
\usepackage{mathrsfs,xfrac} 
\usepackage[colorlinks=true,linkcolor=blue,citecolor=blue,urlcolor=blue]{hyperref} 
\usepackage{amsmath,amssymb,amsthm,amsfonts,amsbsy,latexsym,dsfont,color,graphicx,enumitem}
\usepackage[foot]{amsaddr}
\usepackage{caption}
\usepackage{comment}

\usepackage[textsize=tiny]{todonotes}
\usepackage{regexpatch}
\makeatletter
\xpatchcmd{\@todo}{\setkeys{todonotes}{#1}}{\setkeys{todonotes}{inline,#1}}{}{}
\makeatother

\usepackage{enumitem}

\newenvironment{enumeratea}{\begin{enumerate}[label=\textup{(\alph*)},leftmargin=2em,itemsep=1ex]}{\end{enumerate}}

\newtheorem{thm}{Theorem}[section]
\newtheorem{lem}[thm]{Lemma}
\newtheorem{cor}[thm]{Corollary}
\newtheorem{prop}[thm]{Proposition}

\newtheorem{rem}{Remark}[section]
\renewcommand{\le}{\leqslant}  
\renewcommand{\ge}{\geqslant} 
\renewcommand{\leq}{\leqslant}  
\renewcommand{\geq}{\geqslant}

\newcommand{\ind}{\mathds{1}}
\newcommand{\eps}{\varepsilon}

\newcommand{\norm}[1]{\left\Vert#1\right\Vert}
\newcommand{\abs}[1]{\left\vert#1\right\vert}

\newcommand{\ie}{\emph{i.e.,}}

\def\qed{ \hfill $\blacksquare$}  
     \let\gl=\lambda

\newcommand{\cB}{\mathcal{B}}
\newcommand{\cF}{\mathcal{F}}
\newcommand{\cH}{\mathcal{H}}

\newcommand{\cO}{\mathcal{O}}

\newcommand{\cZ}{\mathcal{Z}}  
\newcommand{\vX}{\mathbf{X}}\newcommand{\vY}{\mathbf{Y}}

\newcommand{\vx}{\mathbf{x}}
 
\newcommand{\mvW}{\boldsymbol{W}}

\newcommand{\dR}{\mathds{R}}

\DeclareMathOperator{\E}{\mathds{E}}
\DeclareMathOperator{\pr}{\mathds{P}}

\DeclareMathOperator{\var}{Var}
\DeclareMathOperator{\cov}{Cov}
\DeclareMathOperator{\argmax}{argmax}

\DeclareMathOperator{\hypgeom}{Hypergeometric}

\DeclareMathOperator{\Ber}{Ber}
\DeclareMathOperator{\N}{N}
\DeclareMathOperator{\Mult}{Multinomial}

\DeclareMathOperator{\s}{\mathbf{score}}
\DeclareMathOperator{\Bin}{Bin}